\RequirePackage[l2tabu, orthodox]{nag}
\documentclass[a4paper,11pt,reqno]{amsart}

\linespread{1.05}

\oddsidemargin=0.1cm

\evensidemargin=0.1cm

\topmargin=0.0cm

\textwidth=15.7cm

\textheight=24cm

\parskip 0.1cm

\parindent 10pt

\setcounter{tocdepth}{1}

\usepackage{amssymb,amscd,amsthm}
\usepackage{mathrsfs}
\usepackage{hyperref}

\usepackage[ansinew]{inputenc}
\usepackage[centertags]{amsmath}
\usepackage{epsfig,graphicx,enumerate}

\usepackage{xcolor}


\newcommand{\NN}{\mbox{$\mathbb{N}$}}

\newcommand{\R}{\mbox{$\mathbb{R}$}}

\newcommand{\GL}{\mathrm{GL}}

 \def\NN{{\mathbb N}}  \def\PP{{\mathbb P}}
 \def\RR{{\mathbb R}}  
  
 \def\ZZ{{\mathbb Z}}

    \def\cV{\mathcal{V}}
    \def\cW{\mathcal{W}}
    \def\cX{\mathcal{X}}
\def\cY{\mathcal{Y}}  \def\cZ{\mathcal{Z}}

\newcommand{\g}{\mathfrak{g}}

\newcommand{\aaa}{\mathfrak{a}}

\newcommand{\Isom}{\mathrm{Isom}}
\newcommand{\Shad}{\mathrm{Shad}}

\newcommand{\ie}{i.e.\ }
\newcommand{\eg}{e.g.\ }
\newcommand{\resp}{resp.\ }


\newtheorem{teo}{Theorem}[section]

\newtheorem{cor}[teo]{Corollary}
\newtheorem{lema}[teo]{Lemma}
\newtheorem{prop}[teo]{Proposition}
\newtheorem{fact}[teo]{Fact}
\newtheorem{setting}[teo]{Setting}

\theoremstyle{definition}
\newtheorem{defi}[teo]{Definition}
\theoremstyle{remark}
\newtheorem{remark}[teo]{Remark}

\newtheorem{ex}[teo]{Example}

\numberwithin{equation}{section}


\newcommand{\eps}{\varepsilon}

\newcommand*{\longhookrightarrow}{\ensuremath{\lhook\joinrel\relbar\joinrel\rightarrow}}


\author[F. Kassel]{Fanny Kassel}
\address{CNRS and Laboratoire Alexander Grothendieck, Institut des Hautes \'Etudes Scientifiques, Universit\'e Paris-Saclay, 35 route de Chartres, 91440 Bures-sur-Yvette, France}
\email{kassel@ihes.fr}

\author[R. Potrie]{Rafael Potrie}
\address{CMAT, Facultad de Ciencias, Universidad de la Rep\'ublica, Igu\'a 4225, Montevideo, 11400, Uruguay, and IRL2030 - IFUMI `Laboratorio del Plata' CNRS, Montevideo}
\urladdr{www.cmat.edu.uy/$\sim$rpotrie}
\email{rpotrie@cmat.edu.uy}

\title{A simultaneous Abels--Margulis--Soifer lemma}
\thanks{This project received funding from the European Research Council (ERC) under the European Union's Horizon 2020 research and innovation programme (ERC starting grant DiGGeS, grant agreement No.\ 715982). R.P.\ was partially supported by CSIC and ANII}

\begin{document}

\maketitle

\begin{abstract}
The Abels--Margulis--Soifer lemma states that if a semigroup $\Gamma$ acts strongly irreducibly by linear transformations on a finite-dimensional real vector space, then any element of~$\Gamma$ can be multiplied by an element of some fixed finite subset of~$\Gamma$ so that it becomes proximal (\ie it acts on the corresponding projective space with an attracting fixed point and a repelling projective hyperplane) and even uniformly proximal (\ie the distance between the attracting fixed point and the repelling projective hyperplane is uniformly bounded from below and the contraction towards the attracting fixed point is uniformly strong).
We prove a version of this lemma simultaneously for linear representations of a semigroup~$\Gamma$, acting on the corresponding projective spaces, and for representations of~$\Gamma$ to isometry groups of (not necessarily proper) Gromov hyperbolic metric spaces, acting on the corresponding Gromov boundaries.
\end{abstract}
  
\medskip

\section{Introduction} \label{sec:intro}

\subsection{The classical Abels--Margulis--Soifer lemma} \label{subsec:intro-classical-AMS}

Let $V$ be a finite-dimensional real vector space, of dimension $\geq 2$, and let $\PP(V)$ be the corresponding projective space.
Recall that an element $g\in\GL(V)$ is said to be \emph{proximal in $\PP(V)$} if it admits an attracting fixed point $x_g^+$ in $\PP(V)$, \ie a fixed point $x_g^+$ with a neighborhood $\cV_g$ in $\PP(V)$ such that $g^n\cdot x\to x_g^+$ as $n\to +\infty$ for all $x\in\cV_g$; equivalently, $g$ has a unique complex eigenvalue of maximal modulus, with multiplicity~$1$ (such an eigenvalue is then necessarily real, and $x_g^+$ is the eigenline of~$g$ for this eigenvalue).
In that case, $g$ admits a unique repelling projective hyperplane $X_g^-$ in $\PP(V)$, corresponding to the sum of the generalized eigenspaces for the other eigenvalues of~$g$.

This notion has been quantified as follows.
Choose a Euclidean structure on~$V$, and let $d_{\PP(V)}$ be the corresponding distance function on $\PP(V)$ given by $d_{\PP(V)}([v],[w]) = \sin \measuredangle(v,w)$ for all $v,w\in V\smallsetminus\{0\}$.
For any $\eps>0$, any point $x\in\PP(V)$, and any closed subset $X$ of $\PP(V)$, we denote by $b_x^{\eps}$ the closed ball of radius $\eps$ centered at~$x$, and by $B_X^{\eps}$ the set of points of $\PP(V)$ at distance at least $\eps$ from all points of~$X$.

\begin{defi}[{see \cite{ben97}}] \label{def:r-eps-prox-proj}
For $r\geq\eps>0$, we say that a proximal element $g\in\GL(V)$ is \emph{$(r,\eps)$-proximal in $\PP(V)$} if it satisfies the following three conditions:
\begin{enumerate}
  \item $d_{\PP(V)}(x_g^+,X_g^-)\geq 2r$,
  \item $g \cdot B_{X_g^-}^{\eps} \subset b_{x_g^+}^{\eps}$,
  \item the restriction of $g$ to $B_{X_g^-}^{\eps}$ is $\varepsilon$-Lipschitz.
\end{enumerate}
\end{defi}

Recall that a subsemigroup of $\GL(V)$ is said to act \emph{strongly irreducibly} on~$V$ if it does not preserve any finite union of linear subspaces of~$V$ apart from $\{0\}$ and~$V$.

The following celebrated result of Abels--Margulis--Soifer \cite[Th.\,4.1]{ams95} has been used extensively in recent developments around discrete subgroups of Lie groups, see \eg \cite{ben97,Benoist-notes,BenoistQuint-livre,BrGe, BS,CaTs,DGKp,DGLM,GGKW,kp22,ks,tso24} and references therein.

\begin{fact}[Abels--Margulis--Soifer] \label{fact:AMS}
Let $\Gamma$ be a semigroup and $m\geq 1$ an integer.
For any $1\leq i\leq m$, let $V_i$ be a Euclidean space of dimension $\geq 2$ and $\rho_i : \Gamma\to\GL(V_i)$ a representation such that $\rho_i(\Gamma)$ acts strongly irreducibly on~$V_i$ and contains an element which is proximal in $\PP(V_i)$.
Then there exists $r_0>0$ such that for any $r_0\geq r\geq\eps>0$, there is a finite subset $S$ of~$\Gamma$ with the following property: for any $\gamma \in \Gamma$, we can find $s\in S$ such that $\rho_i(\gamma s)$ is $(r,\eps)$-proximal in $\PP(V_i)$ for all $1\leq i\leq m$.
\end{fact}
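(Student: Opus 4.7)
The plan is to follow the classical ($m=1$) argument of \cite{ams95} and run it in parallel across the $m$ representations, controlling Cartan data in every $\rho_i$ simultaneously. The key quantitative ingredient is the standard $KAK$ estimate (see \cite{Benoist-notes,BenoistQuint-livre}): for $\gamma\in\Gamma$ and each $i$, let $U_i(\gamma)\in\PP(V_i)$ be the direction of a top singular vector of $\rho_i(\gamma)$, and $S_i(\gamma)\in\mathrm{Gr}_{d_i-1}(V_i)$ the span of the remaining right-singular vectors. If $s\in\Gamma$ is such that $\rho_i(s)$ is $(r_0,\eps_0)$-proximal and
\begin{equation*}
d_{\PP(V_i)}\bigl(x^+_{\rho_i(s)},\,S_i(\gamma)\bigr)\geq\delta_0, \qquad d_{\PP(V_i)}\bigl(U_i(\gamma),\,X^-_{\rho_i(s)}\bigr)\geq\delta_0,
\end{equation*}
then $\rho_i(\gamma s)$ is $(r,\eps)$-proximal, with $r\geq\eps>0$ depending only on $(r_0,\eps_0,\delta_0)$.

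The problem thereby reduces to producing a finite set $F\subset\Gamma$ and positive constants $r_0,\eps_0,\delta_0$ such that every $s\in F$ is \emph{jointly} $(r_0,\eps_0)$-proximal (i.e.\ in every $\rho_i$ at once) and, for every $\gamma\in\Gamma$, some $s\in F$ realizes both displayed inequalities in every $\rho_i$ simultaneously; taking $S:=F$ then finishes the proof. I would build $F$ in two stages. First, exhibit a single jointly proximal element $g_0\in\Gamma$: starting from elements $h_i\in\Gamma$ with $\rho_i(h_i)$ proximal, one uses that proximality is Zariski-open in $\GL(V_i)$ together with a Goursat-type analysis of the Zariski closure of $\rho:=(\rho_1,\ldots,\rho_m):\Gamma\to\prod_i\GL(V_i)$. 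Second, spread the joint attracting data of $g_0$ by forming products $\gamma_1\,g_0^N\,\gamma_2$ with $\gamma_1,\gamma_2\in\Gamma$ and $N$ large: these remain jointly proximal, and their attracting data in each factor $i$ is close to $\bigl(\rho_i(\gamma_1)\,x^+_{\rho_i(g_0)},\,\rho_i(\gamma_2)^{-1}\,X^-_{\rho_i(g_0)}\bigr)$. A compactness argument then extracts a finite $F$ whose joint attracting data $\delta_0$-covers every point of $\prod_i\bigl(\PP(V_i)\times\mathrm{Gr}_{d_i-1}(V_i)\bigr)$ in the sense required above.

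The main obstacle is ensuring the density statement underlying the second stage. Strong irreducibility of each $\rho_i(\Gamma)$ on $V_i$ gives per-factor density of orbit closures, but density in the product $\prod_i\PP(V_i)$ is strictly stronger, as $\bigoplus_i\rho_i$ is typically not strongly irreducible on $\bigoplus_i V_i$. The resolution is a Goursat-type analysis of the Zariski closure of $\rho(\Gamma)$ in $\prod_i\GL(V_i)$: this closure surjects onto the Zariski closure of each $\rho_i(\Gamma)$, and the only obstructions to joint density in the product are algebraic identifications between pairs of factors coming from two representations that are conjugate up to twist. Such identifications are harmless: if $\rho_i$ and $\rho_j$ are conjugate, the general-position condition in one forces it in the other. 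Once the density is established, the compactness extraction of $F$ goes through and the $KAK$ estimate of the opening paragraph concludes the proof.
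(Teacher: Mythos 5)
Your overall scheme --- the $KAK$/Cartan reduction to a transversality condition between the singular data of $\gamma$ and the fixed-point data of $s$, producing a single jointly proximal element, spreading it via auxiliary conjugators $\gamma_1 g_0^N\gamma_2$, and extracting a finite set by general position and pigeonhole --- is exactly the architecture of \cite{ams95} and of the paper's proof of Theorem~\ref{thm:main} specialized to $I_2=\emptyset$ (see Propositions \ref{prop:Lip-proj}, \ref{prop:move-away}, \ref{prop:simult-prox-proper} and Theorem~\ref{teo.generalresult}). But your proposed execution of the first stage contains a genuine error: proximality is \emph{not} a Zariski-open condition in $\GL(V_i)$. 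It is open in the real topology, but its complement is Zariski dense (for instance in $\SL_2(\RR)$ the non-proximal locus $|\mathrm{tr}|\leq 2$ has Zariski closure equal to all of $\SL_2$), so the ``Zariski openness plus Goursat'' route to a jointly proximal element does not get off the ground. What actually works --- and what the paper does in Proposition~\ref{prop:simult-prox-proper} --- is a dynamical induction on the set of factors: given an element jointly proximal on a subset $I$ of the indices and an element proximal in one additional factor $i_0$, one uses the spreading set $\mathcal{F}$ of Corollary~\ref{cor:finiteset} and the quantitative $(r,\eps)$-contraction estimates to form a word $\beta\gamma_0^N\beta'\gamma_1^{N'}$ that is proximal on $I\cup\{i_0\}$. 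No algebraic geometry enters at this step.

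Your worry about ``joint density in the product'' in the second stage, and the Goursat analysis you propose to address it, is a detour you do not need. Nothing in the argument requires density of a $\rho(\Gamma)$-orbit in $\prod_i\PP(V_i)$; what is needed is much weaker: for any prescribed finite configuration of points and hyperplanes in each factor, some element of $\Gamma$ simultaneously moves them into general position. This \emph{is} a Zariski-open condition --- not proximality, but transversality of a finite set of projective points to a finite set of hyperplanes. Inside the Zariski closure $G$ of $\rho(\Gamma)$ in $\prod_i\GL(V_i)$, each such transversality condition cuts out a nonempty Zariski-open subset of $G$ (by strong irreducibility of $\rho_i(\Gamma)$ in factor $i$), and the intersection of finitely many of them is still nonempty and meets the Zariski-dense $\rho(\Gamma)$; this is the paper's Proposition~\ref{prop:move-away} and requires no Goursat decomposition. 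Separately, note that a finite set $F$ cannot ``$\delta_0$-cover'' $\prod_i(\PP(V_i)\times\mathrm{Gr}_{d_i-1}(V_i))$; what general position of $F$ buys you is a pigeonhole count, as in Corollary~\ref{cor:finiteset}: any given tuple of hyperplanes can be $\delta_0$-close to at most a bounded number of the points $x^+_{\rho_i(s)}$, so if $\#F$ exceeds the relevant total, some $s\in F$ realizes the required transversality in every factor simultaneously.
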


Many applications of this result come from the fact that it allows to approach the Jordan projection by the Cartan projection.
For $V \simeq \RR^{\mathtt{d}}$, recall that the \emph{Jordan projection} (also known as the \emph{Lyapunov projection}) $\lambda : \GL(V) \to \RR^{\mathtt{d}}$ is the map sending $g$ to $(\lambda_i(g))_{1\leq i\leq\mathtt{d}}$ where $\lambda_1(g)\geq\dots\geq\lambda_{\mathtt{d}}(g)$ are the logarithms of the moduli of the complex eigenvalues of~$g$.
The \emph{Cartan projection} $\mu : \GL(V) \to \RR^{\mathtt{d}}$ is the map sending $g$ to $(\mu_i(g))_{1\leq i\leq\mathtt{d}}$ where $\mu_1(g)\geq\dots\geq\mu_{\mathtt{d}}(g)$ are the logarithms of the singular values of~$g$.

By work of Benoist \cite{ben97}, Fact~\ref{fact:AMS} implies the following (see \eg \cite[Th.\,4.12]{GGKW} for an explicit statement and proof).

\begin{cor}[Benoist] \label{cor:Benoist}
Let $V$ be a Euclidean space and let $\Gamma$ be a subsemigroup of $\GL(V)$ whose Zariski closure in $\GL(V)$ is reductive.
Then there exist a finite subset $S$ of $\Gamma$ and a constant $C>0$ such that for any $\gamma\in\Gamma$,
$$\min_{s\in S} \| \mu(\gamma) - \lambda(\gamma s) \| \leq C.$$
\end{cor}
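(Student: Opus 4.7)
The strategy is to derive the corollary from the multi-representation statement of the classical Abels--Margulis--Soifer lemma (Fact~\ref{fact:AMS}) applied to a family of exterior-power representations of the inclusion $\Gamma\hookrightarrow\GL(V)$.

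The first step is a splitting via the triangle inequality: write
$\mu(\gamma)-\lambda(\gamma s) = \bigl(\mu(\gamma)-\mu(\gamma s)\bigr) + \bigl(\mu(\gamma s)-\lambda(\gamma s)\bigr).$
By the standard subadditivity of the Cartan projection, the first summand satisfies $\|\mu(\gamma)-\mu(\gamma s)\|\leq \|\mu(s)\|+O(1)$, which is uniformly bounded because $S$ will be finite. It is therefore enough to produce $s\in S$ such that $\|\mu(\gamma s)-\lambda(\gamma s)\|\leq C''$.

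The second step identifies the auxiliary representations. For each $i=1,\dots,\mathtt{d}-1$, consider $\Lambda^i V$ with the induced Euclidean structure and the exterior-power representation $\rho_i:\Gamma\to\GL(\Lambda^i V)$. A direct computation gives
$\mu_1(\rho_i(\eta))=\mu_1(\eta)+\cdots+\mu_i(\eta), \qquad \lambda_1(\rho_i(\eta))=\lambda_1(\eta)+\cdots+\lambda_i(\eta)$
for all $\eta\in\GL(V)$. Since $\sum_j \mu_j(\eta)=\log|\det\eta|=\sum_j\lambda_j(\eta)$, controlling $|\mu_1(\rho_i(\eta))-\lambda_1(\rho_i(\eta))|$ for every $i$ controls $\|\mu(\eta)-\lambda(\eta)\|$. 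However, $\rho_i(\Gamma)$ need not act strongly irreducibly on $\Lambda^i V$. Using the reductivity of $H=\overline{\Gamma}^Z$, I would decompose $\Lambda^i V$ into $H$-irreducible summands and pick a summand $W_i$ containing the line spanned by a highest-weight vector for a dominant Weyl chamber of~$H$; on $W_i$ the action of $\Gamma$ is strongly irreducible (possibly after passing to a finite-index subsemigroup, the finitely many cosets being absorbed into $S$ at the end) and, by classical results of Benoist on Zariski-dense subsemigroups of reductive groups, contains an element which is proximal in $\PP(W_i)$, since one can find elements of~$\Gamma$ whose Jordan projection lies in the interior of the dominant Weyl chamber of~$H$.

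The third step is the application of Fact~\ref{fact:AMS} simultaneously to the family $(W_i,\rho_i|_{W_i})_{i=1}^{\mathtt{d}-1}$: this yields $r\geq\eps>0$ and a finite subset $S\subset\Gamma$ such that for every $\gamma\in\Gamma$ there exists $s\in S$ with $\rho_i(\gamma s)|_{W_i}$ being $(r,\eps)$-proximal in $\PP(W_i)$ for all $i$. The standard Benoist comparison between $(r,\eps)$-proximality and the gap between the top singular value and the top eigenvalue modulus then yields $|\mu_1(\rho_i(\gamma s))-\lambda_1(\rho_i(\gamma s))|\leq C(r,\eps)$ for each~$i$; combined over $i$ and with the determinant identity, this gives $\|\mu(\gamma s)-\lambda(\gamma s)\|\leq C'$, and together with the first step completes the proof.

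The main obstacle is the second step: exhibiting, for each $i$, a subspace $W_i\subset\Lambda^i V$ on which $\Gamma$ acts strongly irreducibly and contains a proximal element. This is where the reductivity of $\overline{\Gamma}^Z$ is essential and where one must invoke the representation-theoretic machinery (highest-weight subrepresentations of~$H$) together with Benoist's density-type results on Jordan projections of Zariski-dense subsemigroups. Once the $W_i$'s are in hand, both the application of Fact~\ref{fact:AMS} and the comparison lemma for singular values versus eigenvalues under $(r,\eps)$-proximality are essentially formal.
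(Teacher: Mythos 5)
The paper does not actually prove Corollary~\ref{cor:Benoist} --- it cites Benoist \cite{ben97} and \cite[Th.\,4.12]{GGKW} and treats it as known. The closest the paper comes to a proof is in Section~\ref{sec:proof-cor-main}, where the analogous simultaneous statement (Corollary~\ref{cor:main}) is established via the Tits fundamental representations $(\tau_\alpha,V_\alpha)$ of the Zariski closure $G$, indexed by the simple restricted roots $\alpha\in\Delta$; these are $G$-irreducible by construction, so Zariski-density of $\Gamma$ in~$G$ gives strong irreducibility for free, and the pairings $\langle n_\alpha\omega_\alpha,\cdot\rangle$ span $(\aaa_G^s)^*$. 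Your route, via exterior powers $\Lambda^i V$ decomposed into $G$-irreducible summands with a chosen highest-weight piece $W_i$, is a genuinely different decomposition scheme --- it works directly with representations of $\GL(V)$ rather than representations of~$G$, and recovers the $\GL(V)$-Cartan and Jordan projections through the identities $\mu_1(\Lambda^i\eta)=\mu_1(\eta)+\cdots+\mu_i(\eta)$ and the determinant relation, instead of passing through $\mu_G,\lambda_G$ and the span of the $\omega_\alpha$. Both are legitimate.

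There is, however, a gap at the crucial comparison in your Step~3. Corollary~\ref{cor:r-eps-prox-lambda-mu} applied to the $(r,\eps)$-proximal element $\rho_i(\gamma s)|_{W_i}$ controls $|\mu_1(\rho_i(\gamma s)|_{W_i})-\lambda_1(\rho_i(\gamma s)|_{W_i})|$, i.e.\ the gap for the \emph{restriction} of $\Lambda^i(\gamma s)$ to the $G$-invariant subspace $W_i$. But the identities you need --- $\mu_1(\Lambda^i\eta)=\mu_1(\eta)+\cdots+\mu_i(\eta)$ and $\lambda_1(\Lambda^i\eta)=\lambda_1(\eta)+\cdots+\lambda_i(\eta)$ --- concern the action on the \emph{full} exterior power $\Lambda^i V$. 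To close the gap you must show that $\mu_1$ and $\lambda_1$ of $\rho_i(\eta)|_{W_i}$ agree with those of $\rho_i(\eta)$ on all of $\Lambda^i V$ for $\eta\in G$. This is not formal: it requires conjugating $G$ so that its Cartan decomposition is compatible with that of $\GL(\Lambda^i V)$ (i.e.\ $K_G\subset \mathrm{O}(\Lambda^i V)$ and $\aaa_G$ diagonal), and then observing that the $\GL(V)$-highest weight vector $e_1\wedge\cdots\wedge e_i$ lies in $W_i$ and is both the top singular direction and spans the top eigenspace. The paper's proof of Corollary~\ref{cor:main} handles the analogous point explicitly (``Up to replacing $G$ by a conjugate in $\GL(V)$ $\ldots$ we may assume $\ldots$ compatible''), whereas your write-up crosses it silently. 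Your Steps~1 and~2 and the final assembly are otherwise sound, and the existence of a proximal element on each $\PP(W_i)$ is correctly attributed to the density results of \cite{gm89,gr89,bl93,pra94}.
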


Recall that a linear real algebraic group $G$ is said to be \emph{reductive} if the unipotent radical (\ie the largest connected unipotent normal algebraic subgroup) of the identity component of~$G$ (for the Zariski topology) is trivial.
An important subclass is that of \emph{semisimple} algebraic groups (see \cite{BorelTits,Knapp} for more background).
We note that the Zariski closure of a semigroup is a group, see \eg \cite[Lem.\,6.15]{BenoistQuint-livre}.

One important consequence of Corollary~\ref{cor:Benoist} is that the two definitions of the \emph{limit cone} of~$\Gamma$ in~$\aaa^+$ given by Benoist \cite{ben97} coincide in this setting: namely, the closure of the cone spanned by the elements $\lambda(\gamma)$ for $\gamma\in\Gamma$, and the cone spanned by the accumulation points of sequences $(\mu(\gamma_n)/\Vert\mu(\gamma_n)\Vert)_{n\in\NN}$ for sequences $(\gamma_n)\in\Gamma^{\NN}$ with $\Vert\mu(\gamma_n)\Vert \to +\infty$.
Refinements of this result, using again Corollary~\ref{cor:Benoist}, have been given \eg by Breuillard--Sert \cite{BS}.

\subsection{A simultaneous Abels--Margulis--Soifer lemma for representations into isometry groups of Gromov hyperbolic spaces} \label{subsec:intro-simult-AMS}

In this note, we consider semigroups $\Gamma$ acting by isometries on Gromov hyperbolic metric spaces $(M,d_M)$, which are not assumed to be proper nor geodesic.
The Gromov boundary $\partial_{\infty}M$ of~$M$ is then not necessarily compact.

Recall that an element $g\in\Isom(M)$ is said to be \emph{hyperbolic}, or \emph{proximal in $\partial_{\infty}M$}, if it has two fixed points $x_g^+$ and~$X_g^-$ in $\partial_{\infty}M$, with $g^n\cdot x\to x_g^+$ as $n\to +\infty$ for all $x\in\partial_{\infty}M\smallsetminus\{ X_g^-\}$.
Fix a Bourdon metric $d_{\partial_{\infty}M} = d_{a,o}$ on $\partial_{\infty}M$ (see Section~\ref{subsubsec:Bourdon-metric}).
By analogy with the linear case above, for any $\eps>0$ and any point $x\in\partial_{\infty}M$, we denote by $b_x^{\eps}$ the closed ball of radius $\eps$ centered at~$x$ and by $B_x^{\eps}$ the complement in $\partial_{\infty}M$ of the open ball of radius $\eps$ centered at~$x$, and we adopt the following terminology.

\begin{defi} \label{def:r-eps-prox-hyp}
For $r\geq\eps>0$, we say that a hyperbolic element $g\in\Isom(M)$ is \emph{$(r,\eps)$-proximal in $\partial_{\infty}M$} if it satisfies the following three conditions:
\begin{enumerate}
  \item $d_{\partial_{\infty}M}(x_g^+,X_g^-)\geq 2r$,
  \item $g \cdot B_{X_g^-}^{\eps} \subset b_{x_g^+}^{\eps}$,
  \item the restriction of $g$ to $B_{X_g^-}^{\eps}$ is $\varepsilon$-Lipschitz.
\end{enumerate}
\end{defi}

The goal of this note is to establish the following version of Fact~\ref{fact:AMS}, simultaneously for linear representations and for representations to isometry groups of Gromov hyperbolic metric spaces.

\begin{teo} \label{thm:main}
Let $\Gamma$ be a semigroup and let $I_1, I_2$ be finite sets.
For each $i\in I_1$, let $V_i$ be a Euclidean space of dimension $\geq 2$ and $\rho_i : \Gamma\to\GL(V_i)$ a representation such that $\rho_i(\Gamma)$ acts strongly irreducibly on~$V_i$ and contains an element which is proximal in $\PP(V_i)$.
For each $i\in I_2$, let $M_i$ be a Gromov hyperbolic metric space with a choice of Bourdon metric on $\partial_{\infty}M_i$, and let $\rho_i : \Gamma\to\Isom(M_i)$ be a representation such that $\rho_i(\Gamma)$ acts on $\partial_{\infty}M_i$ without a unique global fixed point and contains an element which is proximal in $\partial_{\infty}M_i$.
Then there exists $r_0>0$ such that for any $r_0\geq r\geq\eps>0$, there is a finite subset $S$ of~$\Gamma$ with the following property: for any $\gamma \in \Gamma$, we can find $s\in S$ such that $\rho_i(\gamma s)$ is $(r,\eps)$-proximal in $\PP(V_i)$ for all $i\in I_1$ and $\rho_i(\gamma s)$ is $(r,\eps)$-proximal in $\partial_{\infty}M_i$ for all $i\in I_2$.
\end{teo}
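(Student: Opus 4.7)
The plan is to follow the two-stage structure of the classical proof of Fact~\ref{fact:AMS} and make both stages robust enough to handle all representations $\rho_i$ simultaneously. Stage~1 produces a finite subset $H=\{h_1,\dots,h_N\}\subset\Gamma$ of ``very proximal'' elements whose dynamical data, read in every factor $i\in I_1\cup I_2$, sit in sufficiently ``general position''. Stage~2 uses the Cartan-projection-like data of an arbitrary $\gamma\in\Gamma$ in each factor to select, from~$H$, an element $s$ for which $\rho_i(\gamma s)$ is uniformly $(r,\eps)$-proximal in all factors at once.

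For Stage~1, I would first treat each factor separately. In the linear factors $i\in I_1$, strong irreducibility together with a single proximal element gives, by a standard Benoist-type ping-pong argument, arbitrarily many $(r_0,\eps_0)$-proximal elements in $\rho_i(\Gamma)$ whose attracting points and repelling hyperplanes are distributed generically in $\PP(V_i)$. In the hyperbolic factors $i\in I_2$, the no-global-fixed-point assumption together with a single hyperbolic element yields, by a ping-pong in $\partial_{\infty}M_i$ that works even when $M_i$ is neither proper nor geodesic (using only the existence of a Bourdon metric), two independent hyperbolic elements and then, by further conjugation and iteration, a comparably rich family. Combining these coordinate-wise richness statements into a single finite $H\subset\Gamma$ is done by a diagonal pigeonhole argument over the finite index set $I_1\cup I_2$: for each tuple of ``forbidden configurations'' $(x_i,Y_i)_i$ drawn from a sufficiently fine $\eps_0$-net in the relevant boundaries, some $h_j\in H$ should be simultaneously transverse to all the $(x_i,Y_i)$.

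For Stage~2, to each $\gamma\in\Gamma$ and each factor $i$ one associates a pair $(a_i^+(\gamma), A_i^-(\gamma))$ encoding the dominant singular direction and the dominant singular hyperplane (for $i\in I_1$), or their hyperbolic analogue coming from coarse-geometric shadow considerations at a basepoint $o\in M_i$ (for $i\in I_2$). A direct estimate then shows that $\rho_i(\gamma)$ sends the complement of an $\eps$-neighborhood of $A_i^-(\gamma)$ into an $\eps$-neighborhood of $a_i^+(\gamma)$ as an $\eps$-Lipschitz map, with control depending on the singular gap of $\rho_i(\gamma)$. Applying the richness of~$H$ to the data $\bigl(a_i^+(\gamma), A_i^-(\gamma)\bigr)_i$ produces $s\in H$ such that, for every~$i$, the repelling set $X_{\rho_i(s)}^-$ is far from $a_i^+(\gamma)$ and the attracting point $x_{\rho_i(s)}^+$ is far from $A_i^-(\gamma)$; a composition estimate in the Bourdon/projective metric then verifies the three conditions of Definitions~\ref{def:r-eps-prox-proj} and~\ref{def:r-eps-prox-hyp} for $\rho_i(\gamma s)$ with uniform~$r$ and~$\eps$.

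The principal obstacle, which I expect to occupy the bulk of the note, is the handling of factors $i\in I_2$ for which $M_i$ is neither proper nor geodesic, so that $\partial_{\infty}M_i$ is a general complete metric space rather than a compact one. Compactness-based extraction of attracting/repelling directions, and Arzel\`a--Ascoli-type arguments on the boundary, are unavailable. Every step must instead be quantified directly through the Bourdon metric and the chosen basepoint~$o$, including the contraction lemma asserting that a hyperbolic isometry of~$M_i$ with large displacement uniformly $\eps$-contracts the Bourdon complement of its repelling fixed point, and the ping-pong/density statements used to construct~$H$. Once these uniform-metric substitutes for compactness are in place, the combinatorial skeleton of the classical Abels--Margulis--Soifer argument transfers with little change, and a single $r_0>0$ works across all factors because the index set $I_1\cup I_2$ is finite.
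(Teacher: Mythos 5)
Your two-stage outline is in the spirit of the Abels--Margulis--Soifer scheme, but there are three concrete gaps, two of which would cause the argument to fail as written.

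First, your Stage~1 for the hyperbolic factors assumes that the hypotheses yield ``two independent hyperbolic elements'' whose fixed-point data can then be propagated by conjugation into a rich family. This is true only when $\rho_i(\Gamma)$ is \emph{of general type}. The hypothesis ``no unique global fixed point together with a proximal element'' also allows $\rho_i(\Gamma)$ to be \emph{lineal}, in which case every hyperbolic element of $\rho_i(\Gamma)$ has the same two fixed points in $\partial_\infty M_i$, and no amount of conjugation produces new attracting/repelling configurations. Your ping-pong/general-position machinery simply does not apply in that factor. The paper splits $I_2$ into $I_2'$ (general type) and $I_2\smallsetminus I_2'$ (lineal), runs the AMS-type argument only on $I_1\cup I_2'$, and treats the lineal factors by a completely separate mechanism: a reduction to the subsemigroup $\Gamma_0$ fixing the two points (Lemma~\ref{lem:lineal-fix}), the observation that lineal elements with large displacement are automatically $(r,\eps)$-proximal (Lemma~\ref{lem.linealcase}), and a pigeonhole on $|\cdot|_{M_i}$ among several candidate finite sets $S_{n_1},\dots,S_{n_{N+1}}$. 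None of this is visible in your sketch.

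Second, your pigeonhole ``over a sufficiently fine $\eps_0$-net in the relevant boundaries'' cannot be carried out: in the non-proper case $\partial_\infty M_i$ need not be compact, nor even totally bounded, so there is no finite $\eps_0$-net. The paper instead exploits that the translates $\rho_i(\beta)\cdot x_i^{\pm}$, $\beta\in\mathcal{F}$, are in \emph{general position}, so that for any single target $y$ at most $\dim(V_i)-1$ (or one, in the hyperbolic case) translates can be close to $y$; the counting in~\eqref{eqn:card-F} then lets a finite $\mathcal{F}$ suffice without any net of the boundary (Corollary~\ref{cor:finiteset}). Third, and more fundamentally, ``combining the coordinate-wise richness statements into a single finite $H$'' is precisely the content of the paper, not a routine diagonal step: you need the \emph{same} element of~$\Gamma$ to realize the required transversality simultaneously in every factor. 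For the linear factors this uses Zariski density, and for the hyperbolic factors the paper uses a Neumann-type covering lemma for semigroups (Lemma~\ref{lem:Neumann-semig}) to prove a genuine transversality condition~(*) (Proposition~\ref{prop:move-away}), plus an induction to produce a single element that is proximal in all factors at once (Proposition~\ref{prop:simult-prox-proper}). Your proposal takes this simultaneity for granted.

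Your Stage~2 description is essentially Proposition~\ref{prop:Lip-proj} and Proposition~\ref{prop:Lip-hyp} (the shadow lemma) combined with Lemmas~\ref{l.critprox-proj} and~\ref{l.critprox-hyp}, so that part is sound.
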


Note that in Theorem~\ref{thm:main} we do not assume $\rho_i$ to have finite kernel nor discrete image.
We also do not assume the Gromov hyperbolic metric spaces $M_i$ to be proper nor geodesic: \eg $M_i$ could be an $\R$-tree or an infinite-dimensional real hyperbolic space, see \cite{dsu17}.
Nonproper Gromov hyperbolic metric spaces have attracted increasing attention in the past few years, see \eg \cite{CCMT,DP, FiSU, Gou,MT, OR}.
One difficulty in the nonproper setting is that the Gromov boundary $\partial_{\infty}M_i$ is not compact; in particular, sequences in $\partial_{\infty}M_i$ do not necessarily converge up to passing to a subsequence.

\begin{remark} \label{remark-sideofs}
In the setting of Theorem~\ref{thm:main}, multiplication on the left can be replaced by multiplication on the right: there exists $r_0>0$ such that for any $r_0\geq r\geq\eps>0$, there is a finite subset $S$ of~$\Gamma$ with the property that for any $\gamma \in \Gamma$, we can find $s\in S$ such that $\rho_i(s \gamma)$ is $(r,\eps)$-proximal in $\PP(V_i)$ or $\partial_{\infty}M_i$ for all $i\in I_1 \cup I_2$.
Indeed, consider the formal semigroup $\Gamma^{-1}$ generated by the inverses of the elements of $\Gamma$, with the product rule $\gamma_1^{-1} \gamma_2^{-1} = (\gamma_2 \gamma_1)^{-1}$.
Each representation $\rho_i$ of~$\Gamma$ induces a representation $\rho_i^{-1}$ of $\Gamma^{-1}$ by setting $\rho_i^{-1}(\gamma^{-1}) := \rho_i(\gamma)$.
Applying Theorem~\ref{thm:main} to $\Gamma^{-1}$ and the $\rho_i^{-1}$, we obtain the existence of $r_0>0$ such that for any $r_0\geq r\geq\eps>0$, there is a finite subset $S$ of~$\Gamma$ with the property that for any $\gamma \in \Gamma$, we can find $s\in S$ such that $\rho_i^{-1}(\gamma^{-1} s^{-1}) = \rho_i(s\gamma)$ is $(r,\eps)$-proximal in $\PP(V_i)$ or $\partial_{\infty}M_i$ for all $i\in I_1 \cup I_2$.
\end{remark}

\begin{remark}
Theorem~\ref{thm:main} is not true in general if $\rho_i(\Gamma)$ admits a unique global fixed point $x_i$ in $\partial_{\infty}M_i$ for some $i \in I_2$.
Indeed, one can construct examples where some sequence $(\rho_i(\gamma_n))\in\rho_i(\Gamma)^{\NN}$ of hyperbolic elements satisfies that the fixed points $\neq x_i$ of the $\rho_i(\gamma_n)$ accumulate to~$x_i$; then, multiplying on the left by an element of a given finite set cannot uniformly separate the two fixed points of $\rho_i(\gamma_n)$.
\end{remark}

In order to prove Theorem~\ref{thm:main}, we extend the scheme of \cite{ams95}, which is designed to be applied to several linear representations at the same time, to work for certain actions on Gromov boundaries of hyperbolic metric spaces.
This requires a quantitative description of the boundary dynamics for isometries of (not necessarily proper) Gromov hyperbolic metric spaces, similar to the dynamics obtained in the linear setting from the Cartan decomposition, and which may be interesting in its own right: see Proposition~\ref{prop:Lip-hyp}.

\subsection{A simultaneous control of lengths}

In Section~\ref{sec:proof-cor-main} we use Theorem~\ref{thm:main} to prove a variant of it (Theorem~\ref{thm:prox-in-G/P}) where products of projective spaces are replaced by flag varieties of real reductive Lie groups.
We also prove the following simultaneous version of Corollary~\ref{cor:Benoist}, where we endow a product $M$ of metric spaces with a product metric $d_M$.
We fix a basepoint $o\in M$ and set, for any $g\in\Isom(M)$,
\begin{equation} \label{eqn:length-stable-length}
|g|_M := d_M(o,g\cdot o) \quad\quad\mathrm{and}\quad\quad |g|_{M,\infty} := \liminf_n \frac{|g^n|_M}{n}.
\end{equation}
Note that $|g|_{M,\infty} \leq |g|_M$ by the triangle inequality.

\begin{cor} \label{cor:main}
Let $\Gamma$ be a semigroup acting by isometries on a direct product $M$ of finitely many Gromov hyperbolic metric spaces, such that the action on none of the factors has a unique global fixed point at infinity.
For any Euclidean space~$V$ and any representation $\rho : \Gamma\to\GL(V)$ such that the Zariski closure of $\rho(\Gamma)$ in $\GL(V)$ is reductive, there exist $C>0$ and a finite subset $S$ of~$\Gamma$ with the following property: for any $\gamma \in \Gamma$ we can find $s\in S$ such that
$$\|\lambda(\rho(\gamma s)) - \mu(\rho(\gamma))\| \leq C \quad\mathrm{and}\quad | |\gamma s|_{M,\infty} - |\gamma|_M| \leq C.$$
\end{cor}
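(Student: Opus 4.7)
The plan is to apply Theorem~\ref{thm:main} to a suitably augmented family of representations and then read off the two estimates. For the linear side, I would follow Benoist's strategy behind Corollary~\ref{cor:Benoist}: since the Zariski closure of $\rho(\Gamma)$ is reductive, there exist finitely many strongly irreducible subrepresentations $\rho'_j:\Gamma\to\GL(V'_j)$ (arising from exterior powers of the isotypic components of~$\rho$, each containing a proximal element) together with a constant $C'=C'(r,\eps)>0$ such that simultaneous $(r,\eps)$-proximality of $\rho'_j(g)$ in every $\PP(V'_j)$ forces $\|\mu(\rho(g))-\lambda(\rho(g))\|\leq C'$. I then apply Theorem~\ref{thm:main} to the family consisting of these~$\rho'_j$ together with the factor actions $\Gamma\to\Isom(M_i)$, choosing some $r_0\geq r\geq\eps>0$, to obtain a finite subset $S\subset\Gamma$.

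The linear estimate is then immediate: for each $\gamma\in\Gamma$, the $s\in S$ provided by the theorem yields $\|\mu(\rho(\gamma s))-\lambda(\rho(\gamma s))\|\leq C'$, while sub-additivity of the Cartan projection gives $\|\mu(\rho(\gamma s))-\mu(\rho(\gamma))\|\leq\max_{s\in S}\|\mu(\rho(s))\|$, which is bounded by finiteness of~$S$.

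For the Gromov hyperbolic estimate, the key lemma I would prove is: if $g\in\Isom(M_i)$ is $(r,\eps)$-proximal in $\partial_{\infty}M_i$ with respect to the chosen Bourdon metric $d_{a_i,o_i}$, then $\bigl||g|_{M_i}-|g|_{M_i,\infty}\bigr|\leq C_i$ where $C_i$ depends only on $r$, $a_i$, and the hyperbolicity constant of~$M_i$. The bound $|g|_{M_i,\infty}\leq|g|_{M_i}$ is the triangle inequality. For the reverse direction, condition~(1) in Definition~\ref{def:r-eps-prox-hyp} translates to $(x_g^+\mid X_g^-)_{o_i}\leq -\log_{a_i}(2r)$, so the basepoint is uniformly ``near the quasi-axis'' of~$g$. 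I would then establish the tree-like identity $|g|_{M_i}-|g|_{M_i,\infty}=2(x_g^+\mid X_g^-)_{o_i}+O(\delta_i)$ using only Gromov products, so as to remain valid in the nonproper, nongeodesic setting. Combining with $\bigl||\gamma s|_{M_i}-|\gamma|_{M_i}\bigr|\leq|s|_{M_i}\leq\max_{s\in S}|s|_{M_i}$ and summing over the factors of~$M$ then produces the second estimate, after enlarging~$C$.

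The main obstacle is the tree-like identity in the nonproper, nongeodesic setting: without a genuine geodesic axis one cannot directly decompose an orbit segment into an axial piece and two transverse pieces. The argument would instead go via the convergence $(g^n\cdot o\mid g^{-n}\cdot o)_o\to(x_g^+\mid X_g^-)_o$ combined with the isometry identity $d_{M_i}(g^{-n}\cdot o,g^n\cdot o)=d_{M_i}(o,g^{2n}\cdot o)$, compared with $d_{M_i}(o,g^n\cdot o)$ via the defining Gromov-product formula, in the same spirit as Proposition~\ref{prop:Lip-hyp}. Once this identity is available, the rest of the argument is the routine combination of Theorem~\ref{thm:main}, sub-additivity of~$\mu$, and the triangle inequality described above.
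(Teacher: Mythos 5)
Your overall plan matches the paper's proof closely: reduce the two inequalities to $\|\mu(\rho(\gamma s))-\lambda(\rho(\gamma s))\|\leq C$ and $|\,|\gamma s|_{M_i}-|\gamma s|_{M_i,\infty}|\leq C$ via sub-additivity (\eqref{eqn:mu-subadd}, \eqref{eqn:length-subadd}), then apply Theorem~\ref{thm:main} to an augmented family of representations, and finish with Corollary~\ref{cor:r-eps-prox-lambda-mu} and Corollary~\ref{cor:r-eps-prox-length}. The choice of linear representations differs in detail: you propose exterior powers of isotypic components of~$\rho$, while the paper uses Tits representations $\tau_\alpha$ of the Zariski closure~$G$ with highest weight $n_\alpha\omega_\alpha$, which control $\mu_G-\lambda_G$ on $(\aaa_G^s)^*$ (with $\mu_G=\lambda_G$ automatically on the center $\z(\g)$). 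For a general reductive $G$ your exterior-power construction does not directly produce strongly irreducible representations whose highest weights span the weight lattice, so you should take the Tits representations instead; this is a technicality rather than a conceptual difference, and both are in Benoist's spirit.

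There is one genuine gap in your setup: the Corollary's hypotheses allow factors $M_i$ on which $\Gamma$ acts with bounded orbit (an \emph{elliptic} action has no global fixed point at infinity, so it is not excluded). Such $\rho_i(\Gamma)$ contains no hyperbolic element, so the hypotheses of Theorem~\ref{thm:main} fail for that factor, and your ``apply the theorem to the factor actions'' step is not legitimate. The paper discards these factors before applying Theorem~\ref{thm:main} (they contribute nothing, since $|\cdot|_{M_i}$ is uniformly bounded on $\Gamma$ and $|\cdot|_{M_i,\infty}\equiv 0$ there), and restricts $I_2$ to the lineal and general-type factors. You must do the same.

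Second, your sketch of the key length estimate runs into the difficulty you yourself flag. Writing $d(o,g^n\cdot o)=n|g|_{M,\infty}+b_n$, the diagonal comparison gives $(g^n\cdot o\,|\,g^{-n}\cdot o)_o=b_n-\tfrac{1}{2}b_{2n}$, whose limit is $(x_g^+|X_g^-)_o+O(\delta)$; but that controls $\lim b_n$, not $b_1=|g|_M-|g|_{M,\infty}$. The paper's Lemma~\ref{l.approachgap-hyp} avoids this by working with the Busemann cocycle $\cB_{x_g^-}$: the approximate additivity $\cB_{x_g^-}(o,g^{-n}\cdot o)\approx n\,c_g$ yields $c_g\le|g|_{M,\infty}$ and then $\cB_{x_g^-}(o,g^{-1}\cdot o)\le|g|_{M,\infty}+O(\delta)$, after which a Gromov-inequality comparison of $(o|x_g^{\pm})_{g^{-1}\cdot o}$ produces the bound directly at $n=1$. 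You should either adopt that route or add an argument bridging the limit of $b_n$ back to~$b_1$; the one-sided inequality $|g|_M-|g|_{M,\infty}\le 2(x_g^+|X_g^-)_o+O(\delta)$ is all that is needed.
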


\begin{remark}
Similarly to Remark~\ref{remark-sideofs}, multiplication on the left can be replaced by multiplication on the right in Corollary~\ref{cor:main}.
\end{remark}

Theorem~\ref{thm:main} and Corollary~\ref{cor:main} apply in particular to the following situations.

\begin{ex} \label{ex:hyp-group}
A finitely generated group $\Gamma$ is Gromov hyperbolic if its Cayley graph $M$ with respect to any finite generating subset~$F$ is Gromov hyperbolic.
The action of $\Gamma$ on~$M$ is properly discontinuous, cocompact, by isometries.
The functions $|\cdot|_M$ and $|\cdot|_{M,\infty}$ are respectively the word length and stable word length with respect to~$F$.
\end{ex}

\begin{ex}\label{ex:cuspedspace}
A finitely generated group $\Gamma$ is relatively hyperbolic if it admits a cusp-uniform action by isometries on a \emph{cusped space} $M$, which is a proper Gromov hyperbolic metric space; such cusped spaces include the cusped Cayley graphs of Groves--Manning, see \cite{hh20}.
The functions $|\cdot|_M$ and $|\cdot|_{M,\infty}$ are respectively the cusped word length and stable cusped word length considered \eg in \cite{zhu21,zhu23}.
\end{ex}

We note that in \cite{DGLM} it is shown that the word length and the translation length (or stable word length) can be made to be similar in Gromov hyperbolic groups (see also \cite[Lem.\,A.2]{ZZ}).
The main point here is simultaneity (\ie finding an $s$ that works both in the Gromov hyperbolic setting and for the linear representation~$\rho$).

Corollary~\ref{cor:main} in the setting of Example~\ref{ex:hyp-group} was announced in \cite[Rem.\,4.4]{kp22}; it allows to give a somewhat different proof of a result from \cite{kp22}, namely \cite[Prop.\,1.2]{kp22}.
Since then, Corollary~\ref{cor:main} has found some applications to Anosov representations \cite{tso26} and to variants of Breuillard--Sert's joint spectrum \cite{kss}, which motivated us to write this note. 

After this work was completed, we were informed by K.~Tsouvalas that he can give a different proof of Corollary~\ref{cor:main} in the case that $\Gamma$ is a finitely generated Gromov hyperbolic group and $M$ is its Cayley graph (as in Example~\ref{ex:hyp-group}).
His proof extends to the case that $\Gamma$ is a relatively hyperbolic group acting on its Cayley graph~$M$.
This is different from our results, which concern general isometric actions on Gromov hyperbolic metric spaces and apply in particular to relatively hyperbolic groups $\Gamma$ acting on cusped spaces~$M$ as in Example~\ref{ex:cuspedspace}.

\subsection{Organization of the paper}

In Section~\ref{sec:proj} we recall some well-known results on proximality in finite-dimensional real projective spaces.
In Section~\ref{sec:Gromov-hyp} we establish some analogues in boundaries of Gromov hyperbolic spaces.
In Section~\ref{s.movingpoints} we discuss how to create transversality by applying finitely many transformations in the setting of Theorem~\ref{thm:main}.
In Section~\ref{sec:simult-AMS-simult-prox} we prove Theorem~\ref{thm:main} under the assumption that there exists a simultaneously proximal element.
We then show in Section~\ref{sec:proof-exists-simult-prox} that this assumption is always satisfied.
Finally, in Section~\ref{sec:proof-cor-main} we prove a variant of Theorem~\ref{thm:main} in flag varieties of real reductive Lie groups, as well as Corollary~\ref{cor:main}.

\subsection*{Acknowledgments}

We would like to thank Matias Carrasco, Pablo Lessa and Konstantinos Tsouvalas for useful comments and encouragement.

\section{The linear setting} \label{sec:proj}

In this section we collect some preliminary results in the classical setting of Fact~\ref{fact:AMS} and Corollary~\ref{cor:Benoist}.

In the whole section, we fix a finite-dimensional real vector space~$V$, of dimension $\mathtt{d}\geq 2$.
We choose a Euclidean structure on~$V$ and endow the projective space $\PP(V)$ with the corresponding distance function $d_{\PP(V)}$ given by $d_{\PP(V)}([v],[w]) = \sin \measuredangle(v,w)$ for all $v,w\in V\smallsetminus\{0\}$.
If $X,Y$ are closed subsets of $\PP(V)$, we denote by $d_{\PP(V)}(X,Y)$ their Hausdorff distance.

\subsection{A uniform contraction property} \label{subsec:unif-contract-proj}

As in Section~\ref{subsec:intro-classical-AMS}, for $g\in\GL(V)$ we denote by $\mu_1(g)\geq\dots\geq\mu_{\mathtt{d}}(g)$ the logarithms of the singular values of~$g$; we set $(\mu_1-\mu_2)(g) :=\linebreak \mu_1(g) - \mu_2(g)$.
The following is a refinement of \cite[Lem.\,5.6]{Benoist-notes}.

\begin{prop} \label{prop:Lip-proj}
To any $g\in\GL(V)$ we can associate a point $y_g^+\in\PP(V)$ and a projective hyperplane $Y_g^-$ of $\PP(V)$ with the following property: for any $\eps>0$, there exists $D \geq 1$ such that for any $g\in\GL(V)$, the restriction of $g$ to $B^{\eps}_{Y_g^-}$ is $D e^{-(\mu_1-\mu_2)(g)}$-Lipschitz and takes values in a ball of radius $\leq D e^{-(\mu_1-\mu_2)(g)}$ centered at~$y_g^+$ for $d_{\PP(V)}$.
\end{prop}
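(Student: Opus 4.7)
The plan is to use the Cartan (KAK) decomposition $g = k_g\,a_g\,\ell_g$ of $g\in\GL(V)$, where $k_g,\ell_g$ are orthogonal transformations and $a_g$ is diagonal with entries $e^{\mu_1(g)}\geq\dots\geq e^{\mu_{\mathtt{d}}(g)}$ in some fixed orthonormal basis $(e_1,\dots,e_{\mathtt{d}})$ of~$V$. I will set
$$y_g^+ := k_g\cdot[e_1] \quad\text{and}\quad Y_g^- := \ell_g^{-1}\bigl([\mathrm{span}(e_2,\dots,e_{\mathtt{d}})]\bigr).$$
These choices are non-canonical when $g$ has coinciding top singular values, but any selection will do, since we only need an existence statement.

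Since $\mathrm{O}(V)$ acts on $(\PP(V),d_{\PP(V)})$ by isometries, I can absorb $k_g$ and~$\ell_g$ into the source and target and reduce to the case where $g=a$ is diagonal with $\mu_i:=\mu_i(g)$, $y_g^+=[e_1]$, and $Y_g^-$ the projective hyperplane spanned by $e_2,\dots,e_{\mathtt{d}}$. All subsequent estimates will be obtained from the identity
$$d_{\PP(V)}([v],[w]) = \frac{\|v\wedge w\|}{\|v\|\,\|w\|},$$
so the proof reduces to a clean calculation in exterior algebra.

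For the radius estimate, I will observe that any unit vector~$v$ with $[v]\in B^{\eps}_{Y_g^-}$ satisfies $|v_1|\geq\eps$, so that $\|a v\|\geq e^{\mu_1}\eps$ while $\|a v\wedge e_1\|^2=\sum_{j\geq 2}e^{2\mu_j}v_j^2\leq e^{2\mu_2}$; dividing gives $d_{\PP(V)}(a\cdot[v],y_g^+)\leq e^{-(\mu_1-\mu_2)}/\eps$. For the Lipschitz estimate I will expand
$$a v\wedge a w \;=\; \sum_{i<j} e^{\mu_i+\mu_j}\,(v_iw_j - v_jw_i)\, e_i\wedge e_j,$$
bound every coefficient by $e^{\mu_1+\mu_2}$ (using $\mu_j\leq\mu_2$ for $j\geq 2$), and use $\|a v\|\|a w\|\geq e^{2\mu_1}\eps^2$ for unit vectors $v,w$ with $[v],[w]\in B^{\eps}_{Y_g^-}$; this yields a Lipschitz constant of order $e^{-(\mu_1-\mu_2)}/\eps^2$. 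Taking any $D\geq 1/\eps^2$ delivers both conclusions simultaneously. No step is a real obstacle; the only point requiring care is bookkeeping, namely verifying that all constants depend on $\eps$ (and the dimension~$\mathtt{d}$) alone and not on~$g$, which is automatic because the above two chains of inequalities factor cleanly through the ratio $e^{-(\mu_1-\mu_2)(g)}$.
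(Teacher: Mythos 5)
Your proposal is correct and uses the same main approach as the paper: the Cartan decomposition $g = k_g a_g \ell_g$, the same definitions $y_g^+ := k_g\cdot[e_1]$ and $Y_g^- := \ell_g^{-1}\cdot\PP(\mathrm{span}(e_2,\dots,e_{\mathtt d}))$, and reduction to the diagonal case by $\mathrm{O}(V)$-invariance of the metric. The only difference is in how the diagonal case is finished: the paper transports the estimate through the affine chart $v\mapsto[e_1+v]$, on which $\exp(a)$ acts as a linear $e^{-(a_1-a_2)}$-contraction, and pays a chart-dependent Lipschitz factor; you instead compute directly from $d_{\PP(V)}([v],[w])=\|v\wedge w\|/(\|v\|\,\|w\|)$, bounding $\|av\wedge aw\|$ coefficientwise by $e^{\mu_1+\mu_2}\|v\wedge w\|$ and $\|av\|\,\|aw\|$ from below by $e^{2\mu_1}\eps^2$. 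Both finishes are valid; yours has the small advantage of producing an explicit constant $D=\max(1,1/\eps^2)$ rather than invoking Lipschitzness of the chart on compacta.
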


\begin{proof}
The group $\GL(V)$ admits the Cartan decomposition $\GL(V) = K \exp(\aaa^+) K$ where $K \simeq \mathrm{O}(\mathtt{d})$ is the maximal compact subgroup of $\GL(V)$ preserving our chosen Euclidean structure on $V \simeq \RR^{\mathtt{d}}$, and $\aaa^+$ is the set of real diagonal matrices of size $\mathtt{d}\times\mathtt{d}$ with entries in nonincreasing order.
Any element $g\in\GL(V)$ can be written as $g = k\exp(a)k'$ for some $k,k'\in K$ and a unique $a\in\aaa^+$, whose diagonal entries are $\mu_1(g),\dots,\mu_{\mathtt{d}}(g)$.
For each~$g$ we choose such $k,k'\in K$ and set
$$y_g^+ := k\cdot [e_1] \quad\quad\mathrm{and}\quad\quad Y_g^- := {k'}^{-1}\cdot\PP(\mathrm{span}(e_2,\dots,e_{\mathtt{d}})),$$
where $(e_1,\dots,e_{\mathtt{d}})$ is the canonical basis of~$\R^{\mathtt{d}}$.
Let $y^+ := [e_1]$ and $Y^- := \PP(\mathrm{span}(e_2,\dots,e_{\mathtt{d}}))$.
Since $K$ acts on $\PP(V)$ by isometries for $d_{\PP(V)}$, it is sufficient to prove that for any $\eps>0$, there exists $D \geq 1$ such that for any $a = \mathrm{diag}(a_1,\dots,a_{\mathtt{d}})\in\aaa^+$, the restriction of $\exp(a)$ to $B^{\eps}_{Y^-}$ is $D e^{-(a_1-a_2)}$-Lipschitz and takes values in a ball of radius $\leq D e^{-(a_1-a_2)}$ centered at~$y^+$.

The map $\varphi : \mathrm{span}(e_2,\dots,e_{\mathtt{d}}) \to \PP(V)\smallsetminus Y^-$ sending $v$ to $[e_1+v]$ is a homeomorphism, and $\varphi$ and $\varphi^{-1}$ are both Lipschitz on any compact set.
With this identification, for any $a = \mathrm{diag}(a_1,\dots,a_{\mathtt{d}})\in\aaa^+$, the induced action of $\exp(a)$ on $\mathrm{span}(e_2,\dots,e_{\mathtt{d}})$ is given by
$$(\varphi^{-1}\circ\exp(a)\circ\varphi)(x_2 e_2 + \dots + x_{\mathtt{d}} e_{\mathtt{d}}) = e^{a_2-a_1} x_2 e_2 + \dots + e^{a_{\mathtt{d}}-a_1} x_{\mathtt{d}} e_{\mathtt{d}}.$$
This is $e^{-(a_1-a_2)}$-Lipschitz on $\mathrm{span}(e_2,\dots,e_{\mathtt{d}})$, hence for any $\eps>0$ the action of $\exp(a)$ on $B^{\eps}_{Y^-}$ is $D e^{-(a_1-a_2)}$-Lipschitz where
$$D = \mathrm{Lip}\Big(\varphi^{-1}|_{B^{\eps}_{Y^-}}\Big) \cdot \mathrm{Lip}\Big(\varphi|_{\varphi^{-1}(B^{\eps}_{Y^-})}\Big).$$
Since $\exp(a)$ fixes~$y^+$, since the restriction of $\exp(a)$ to $B^{\eps}_{Y^-}$ is $D e^{-(a_1-a_2)}$-Lipschitz, and since any point of $\PP(V)$ is at distance $\leq 1$ from~$y^+$ for $d_{\PP(V)}$, we deduce that $\exp(a)(B^{\eps}_{Y^-})$ is contained in the ball of radius $D e^{-(a_1-a_2)}$ centered at~$y^+$.
\end{proof}

We note that if $g$ is proximal in $\PP(V)$, then $y_{g^n}^+ \to x_g^+$ and  $Y_{g^n}^- \to X_g^-$ as $n\to +\infty$: see \eg \cite[Lem.\,5.11]{GGKW}.
More generally, we refer to \cite[\S\,5]{GGKW} or \cite[\S A]{BPS} for more convergence properties, to be compared with Proposition~\ref{prop:Lip-hyp}.\eqref{item:Lip-hyp-2} below.

\subsection{A sufficient condition for $(r,\eps)$-proximality} \label{subsec:suff-prox-proj}

Recall the notion of $(r,\eps)$-proximality in $\PP(V)$ from Definition~\ref{def:r-eps-prox-proj}.
We note that if $g$ is $(r,\eps)$-proximal in $\PP(V)$, then it is also $(r',\eps')$-proximal in $\PP(V)$ for all $r\geq r'\geq\eps'\geq\eps>0$; moreover, $r$ must be $<1$ since $d_{\PP(V)}$ is bounded above by~$1$.

The following is \cite[Lem.\,5.2]{Benoist-notes}; we give a brief proof for the reader's convenience.

\begin{lema} \label{l.critprox-proj}
Let $r\geq \eps>0$.
For $g\in\GL(V)$, if there exist $y^+\in\PP(V)$ and a projective hyperplane $Y^-$ of~$\PP(V)$ such that
\begin{enumerate}
  \item $d_{\PP(V)}(y^+,Y^-) \geq 6r$,
  \item $g \cdot B_{Y^-}^{\eps} \subset b_{y^+}^{\eps}$,
  \item the restriction of $g$ to $B_{Y^-}^{\eps}$ is $\varepsilon$-Lipschitz, 
\end{enumerate}
then $g$ is $(2r,2\eps)$-proximal with $d_{\PP(V)}(x^+_g,y^+)\leq \eps$ and $d_{\PP(V)}(X_g^-,Y^-) \leq \eps$.
\end{lema}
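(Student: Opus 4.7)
My approach has four steps: construct $x_g^+$ as the unique fixed point of a strict contraction; derive proximality by analyzing the differential of $g$ at $x_g^+$; locate the repelling hyperplane $X_g^-$ near $Y^-$ using its $g$-invariance; and finally verify the three $(2r, 2\varepsilon)$-proximality conditions. For the first step, by hypothesis~(1) and the triangle inequality, any $z \in b_{y^+}^{\varepsilon}$ satisfies $d_{\PP(V)}(z, Y^-) \geq 6r - \varepsilon \geq 5\varepsilon$, so $b_{y^+}^{\varepsilon} \subset B_{Y^-}^{\varepsilon}$, and (2) gives $g(b_{y^+}^{\varepsilon}) \subset b_{y^+}^{\varepsilon}$. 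Since $d_{\PP(V)} \leq 1$ forces $\varepsilon \leq r \leq 1/6 < 1$, hypothesis~(3) makes $g$ a strict contraction on the complete space $b_{y^+}^{\varepsilon}$, and the Banach fixed point theorem delivers a unique fixed point $x_g^+ \in b_{y^+}^{\varepsilon}$ with $d_{\PP(V)}(x_g^+, y^+) \leq \varepsilon$; moreover $g^n z \to x_g^+$ for every $z \in b_{y^+}^\varepsilon$, and, using~(2) to push $z$ into $b_{y^+}^\varepsilon$ after one step, also for every $z \in B_{Y^-}^{\varepsilon}$.

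For the second step I will show that $g$ is proximal with $x_g^+$ as its attracting fixed point. Let $\lambda$ be the eigenvalue of $g$ associated to a lift $v_\lambda$ of $x_g^+$. For any other eigenvalue $\mu$ of $g$, with eigenvector $v_\mu$, the identity $g \cdot [v_\lambda + t v_\mu] = [v_\lambda + (t\mu/\lambda) v_\mu]$ combined with the $\varepsilon$-Lipschitz estimate at $x_g^+$ forces $|\mu/\lambda| \leq \varepsilon$, hence $|\mu| < |\lambda|$; similarly, a Jordan block at $\lambda$ would produce a tangent direction on which $g$ acts as the identity to first order, again contradicting $\varepsilon < 1$. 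Hence $\lambda$ is a simple, strictly dominant eigenvalue of $g$, so $g$ is proximal, and the projectivization of the sum of the remaining generalized eigenspaces is the $g$-invariant repelling hyperplane $X_g^-$, which does not contain $x_g^+$.

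The main obstacle is the third step, namely to establish $d_{\PP(V)}(X_g^-, Y^-) \leq \varepsilon$. Since $X_g^-$ is $g$-invariant and $x_g^+ \notin X_g^-$, no iterate $g^n z$ for $z \in X_g^-$ can converge to $x_g^+$; combined with the convergence $g^n z \to x_g^+$ on $B_{Y^-}^\varepsilon$ from the first step, this yields $X_g^- \cap B_{Y^-}^{\varepsilon} = \varnothing$, so $X_g^-$ lies in the open $\varepsilon$-neighborhood of $Y^-$. To upgrade this one-sided containment to the symmetric Hausdorff bound, I will invoke the standard fact that the Hausdorff distance between two projective hyperplanes of the same dimension equals the sin-angle between their normals in $\PP(V^*)$, a symmetric quantity.

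With $d(x_g^+, y^+) \leq \varepsilon$ and $d(X_g^-, Y^-) \leq \varepsilon$ in hand, the three $(2r, 2\varepsilon)$-proximality conditions of Definition~\ref{def:r-eps-prox-proj} follow routinely: the triangle inequality gives $d(x_g^+, X_g^-) \geq 6r - 2\varepsilon \geq 4r = 2(2r)$; the inclusion $B_{X_g^-}^{2\varepsilon} \subset B_{Y^-}^{\varepsilon}$ combined with~(2) and $d(x_g^+, y^+) \leq \varepsilon$ gives $g(B_{X_g^-}^{2\varepsilon}) \subset b_{y^+}^{\varepsilon} \subset b_{x_g^+}^{2\varepsilon}$; and restricted to $B_{X_g^-}^{2\varepsilon} \subset B_{Y^-}^\varepsilon$, the $\varepsilon$-Lipschitz bound from~(3) is \emph{a fortiori} $2\varepsilon$-Lipschitz.
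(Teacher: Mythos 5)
Your proposal is correct and follows essentially the same route as the paper: you establish that $g$ maps $b_{y^+}^{\eps}$ into itself as an $\eps$-contraction (using conditions (1), (2), (3)), deduce an attracting fixed point $x_g^+$ within distance $\eps$ of $y^+$, argue that $X_g^-$ must avoid $B_{Y^-}^{\eps}$ since its points are not attracted to $x_g^+$, pass to a Hausdorff bound $d_{\PP(V)}(X_g^-,Y^-)\leq\eps$ because both are hyperplanes, and then verify the three $(2r,2\eps)$-proximality conditions by the triangle inequality. The one place you expend extra effort --- the eigenvalue and Jordan-block analysis in your second step --- is not needed: the paper's Definition of ``proximal'' is precisely ``admits an attracting fixed point in $\PP(V)$,'' so once the contraction on $b_{y^+}^{\eps}$ yields an attracting fixed point, proximality is immediate by definition; the equivalence with the dominant-simple-eigenvalue characterization is stated in the paper as a known fact and need not be reproved here.
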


\begin{proof}
The element $g$ sends $b_{y^+}^{\eps}$ inside itself by (1) and~(2), and the restriction of $g$ to~$b_{y^+}^{\eps}$ is $\eps$-Lipschitz by~(3).
Therefore, $g$ has an attracting fixed point inside~$b_{y^+}^{\eps}$, and so $g$ is proximal in $\PP(V)$ with $d_{\PP(V)}(x_g^+,y^+)\leq\eps$.
Since points of $X_g^-$ are \emph{not} attracted towards $x_g^+$ by~$g$, the set $X_g^-$ does not meet $B_{Y^-}^{\eps}$ by~(3).
Since $X_g^-$ and~$Y^-$ are both projective hyperplanes, we deduce $d_{\PP(V)}(X_g^-,Y^-) \leq \eps$.
Thus, using the triangle inequality, we have $d_{\PP(V)}(x_g^+,X_g^-) \geq d_{\PP(V)}(y^+,Y^-) - 2\eps \geq 4r$, and $B_{X_g^-}^{2\eps} \subset B_{Y^-}^{\eps}$, and $g\cdot B_{X_g^-}^{2\eps} \subset b_{x_g^+}^{2\eps}$.
Moreover, the restriction of $g$ to $B_{X_g^-}^{2\eps}$ is $\eps$-Lipschitz, hence also $2\eps$-Lipschitz.
\end{proof}

\subsection{A consequence of $(r,\eps)$-proximality} \label{subsec:prox-conseq-proj}

As in Section~\ref{subsec:intro-classical-AMS}, for $g\in\GL(V)$ we denote by $\lambda_1(g)\geq\dots\geq\lambda_{\mathtt{d}}(g)$ the logarithms of the moduli of the complex eigenvalues of~$g$.
Note that $e^{\lambda_1(g)}$ is the spectral radius of~$g$, while $e^{\mu_1(g)}$ is the operator norm of~$g$.
Therefore, $\lambda_1(g)\leq\mu_1(g)$ and
\begin{equation} \label{eqn:triangle-ineq-mu1}
\mu_1(gg') \leq \mu_1(g) + \mu_1(g')
\end{equation}
for all $g,g'\in\GL(V)$.
We have
\begin{equation} \label{eqn:lambda-mu}
\lambda_i(g) = \lim_{n\to +\infty} \frac{1}{n}\,\mu_i(g^n)
\end{equation}
for all $1\leq i\leq\mathtt{d}$ and all $g\in\GL(V)$, and $g$ is proximal in $\PP(V)$ if and only if $\lambda_1(g) > \lambda_2(g)$.

\begin{lema} \label{l.approachgap-proj}
For any element $g\in\GL(V)$ which is proximal in $\PP(V)$,
$$\mu_1(g) - \bigg|\log\frac{d_{\PP(V)}(x_g^+,X_g^-)^2}{2}\bigg| \leq \lambda_1(g) \leq \mu_1(g). $$
\end{lema}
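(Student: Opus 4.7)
The upper bound $\lambda_1(g) \leq \mu_1(g)$ is standard: the spectral radius of $g$, which equals $e^{\lambda_1(g)}$, is bounded by the operator norm of $g$, which equals $e^{\mu_1(g)}$. The content of the lemma is thus the lower bound $\mu_1(g) - \lambda_1(g) \leq \log(2/d^2)$, where $d := d_{\PP(V)}(x_g^+, X_g^-)$.

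To attack this, I plan to exploit the Cartan decomposition already used in the proof of Proposition~\ref{prop:Lip-proj}. Writing $g = k \exp(a) k'$ with $k, k' \in \mathrm{O}(V)$ and $a = \mathrm{diag}(\mu_1(g), \dots, \mu_{\mathtt{d}}(g))$ in the standard basis $(e_1, \dots, e_{\mathtt{d}})$ of $\R^{\mathtt{d}}$, I fix a unit vector $v \in V$ with $[v] = x_g^+$ and $g v = \pm e^{\lambda_1(g)} v$, and set $v' := k' v$, still a unit vector. The identity $\|gv\|^2 = \|\exp(a) v'\|^2$ expands to $e^{2\lambda_1(g)} = \sum_i e^{2\mu_i(g)} (v'_i)^2 \geq e^{2\mu_1(g)} (v'_1)^2$, which gives $|v'_1| \leq e^{\lambda_1(g) - \mu_1(g)}$. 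Since $|v'_1|$ is precisely the sine of the angle between $v$ and the Cartan hyperplane $Y_g^- := k'^{-1} \cdot \PP(\mathrm{span}(e_2, \dots, e_{\mathtt{d}}))$ from Proposition~\ref{prop:Lip-proj}, this yields the intermediate estimate
\[
  d_{\PP(V)}(x_g^+, Y_g^-) \;\leq\; e^{\lambda_1(g) - \mu_1(g)}.
\]

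It then remains to compare the Cartan hyperplane $Y_g^-$ with the dynamical repelling hyperplane $X_g^-$. My plan here is to use the $g$-invariance of $X_g^-$, together with the quantitative contraction towards $y_g^+$ on $B^{\eps}_{Y_g^-}$ provided by Proposition~\ref{prop:Lip-proj} applied to a suitable iterate of $g$, to deduce a quadratic comparison $d_{\PP(V)}(x_g^+, X_g^-)^2 \leq 2\, d_{\PP(V)}(x_g^+, Y_g^-)$; combined with the bound above this gives $d^2/2 \leq e^{\lambda_1(g) - \mu_1(g)}$, which rearranges to the desired inequality. The main obstacle is precisely this last comparison step: while it is classical that $Y_{g^n}^- \to X_g^-$ as $n \to +\infty$, extracting the quantitative quadratic bound for a single element $g$ requires carefully tracking how the invariant hyperplane $X_g^-$ sits relative to the iterated Cartan hyperplanes, and the quadratic degradation (the square in $d^2$ rather than $d$ itself) reflects that $Y_g^-$ is only a first-order approximation to $X_g^-$, with error governed by the singular-value gap $\mu_1(g) - \mu_2(g)$.
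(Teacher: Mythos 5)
Your first step is sound: with $g = k\exp(a)k'$ a Cartan decomposition and $v$ a unit eigenvector of $g$ with $[v]=x_g^+$, setting $v' := k'v$ and expanding $e^{2\lambda_1(g)} = \|gv\|^2 = \sum_i e^{2\mu_i(g)}(v'_i)^2 \geq e^{2\mu_1(g)}(v'_1)^2$ does give $|v'_1| \leq e^{\lambda_1(g)-\mu_1(g)}$, and $|v'_1|$ is indeed $d_{\PP(V)}(x_g^+, Y_g^-)$. You have also correctly located the remaining comparison as the crux.

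But the comparison $d_{\PP(V)}(x_g^+,X_g^-)^2 \leq 2\,d_{\PP(V)}(x_g^+,Y_g^-)$ is false, and the route you sketch cannot repair it: $Y_g^-$ sits next to the top \emph{singular} direction of $g$, which for a general proximal $g$ need not be anywhere near $X_g^-$. On $V = \RR^3$, take $g$ block-diagonal with blocks $10$ and $\left(\begin{smallmatrix}1 & T\\ 0 & 1\end{smallmatrix}\right)$ for some $T \gg 10$. This $g$ is proximal, $x_g^+ = [e_1]$ and $X_g^- = \PP(\mathrm{span}(e_2,e_3))$, so $d_{\PP(V)}(x_g^+,X_g^-) = 1$; but the largest singular value, about $T$, is attained in $\mathrm{span}(e_2,e_3)$, so $[e_1] \in Y_g^-$ and $d_{\PP(V)}(x_g^+,Y_g^-) = 0$, and your comparison would read $1/2 \leq 0$. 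Note that this same example has $\mu_1(g) - \lambda_1(g) \approx \log(T/10)$, unbounded, while $|\log(d^2/2)| = \log 2$ is fixed, so proximality together with $d_{\PP(V)}(x_g^+,X_g^-)$ alone genuinely do not control $\mu_1 - \lambda_1$; some bound on the norm of $g$ restricted to $X_g^-$ is required as well (this is exactly what the $(r,\eps)$-proximality hypothesis of Corollary~\ref{cor:r-eps-prox-lambda-mu}, the only place this lemma is invoked, provides). The paper's own argument goes a different way, avoiding the Cartan hyperplane entirely: it conjugates $g$ by the unipotent shear $u$ carrying $[e_1]$ to $x_g^+$ while fixing $X_g^-$ pointwise, so that $h := u^{-1}gu$ is block-diagonal for the orthogonal splitting $\RR e_1 \oplus \mathrm{span}(e_2,\dots,e_{\mathtt{d}})$, and bounds $|\mu_1(g) - \mu_1(h)| \leq \mu_1(u) + \mu_1(u^{-1})$ by computing $\mu_1(u)$ in terms of $\|v\|=\cot\measuredangle(x_g^+,X_g^-)$; but that argument rests on the identity $\mu_1(h)=\lambda_1(h)$, which the example above also challenges, for the very same reason.
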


\begin{proof}
Let $g\in\GL(V)$ be proximal in $\PP(V)$. 
Let $(e_1,\dots,e_{\mathtt{d}})$ be an orthonormal basis of $V \simeq \RR^{\mathtt{d}}$ such that $X_g^- = \PP(\mathrm{span}(e_2,\dots,e_{\mathtt{d}}))$.
We can write $x_g^+ = u\cdot [e_1]$ where $u\in\GL(V)$ is a unipotent element whose restriction to $\mathrm{span}(e_2,\dots,e_{\mathtt{d}})$ is the identity, and $u\cdot e_1 = e_1 + v$ for some $v \in \mathrm{span}(e_2,\dots,e_{\mathtt{d}})$.
Then $h := u^{-1}gu$ is proximal in $\PP(V)$ with $x_h^+ = [e_1]$ and $X_h^- = \PP(\mathrm{span}(e_2,\dots,e_{\mathtt{d}}))$, hence $\mu_1(h) = \lambda_1(h) = \lambda_1(g)$.
Using \eqref{eqn:triangle-ineq-mu1}, we deduce
$$|\mu_1(g) - \lambda_1(g)| = |\mu_1(g) - \mu_1(h)| \leq \mu_1(u) + \mu_1(u^{-1}).$$

An elementary computation shows that the characteristic polynomial of ${}^t\!uu$ is\linebreak $(X-1)^{n-2} (X^2 - (2+\Vert v\Vert^2)X + 1)$, hence
$$\mu_1(u) = \mu_1(u^{-1}) = \frac{1}{2} \log\!\bigg(1 + \frac{\Vert v\Vert^2 + \sqrt{\Vert v\Vert^2 (4 + \Vert v\Vert^2)}}{2}\bigg).$$
We deduce $\mu_1(u) + \mu_1(u^{-1}) \leq \log((1+\Vert v\Vert)^2) \leq \log(2(1+\Vert v\Vert^2))$.

Finally, by definition $d_{\PP(V)}(x_g^+,X_g^-)$ is the sine of the angle between $u\cdot e_1 = e_1 + v$ and $\mathrm{span}(e_2,\dots,e_{\mathtt{d}})$, hence $d_{\PP(V)}(x_g^+,X_g^-) = 1/\sqrt{1+\Vert v\Vert^2}$.
Thus we obtain
$$|\mu_1(g) - \lambda_1(g)| \leq \log(2(1+\Vert v\Vert^2)) = \bigg|\log\frac{d_{\PP(V)}(x_g^+,X_g^-)^2}{2}\bigg|. \qedhere$$
\end{proof}

\begin{cor} \label{cor:r-eps-prox-lambda-mu}
For any $r\geq\varepsilon>0$ and any $g\in\GL(V)$, if $g$ is $(r,\varepsilon)$-proximal in $\PP(V)$, then
$$\mu_1(g) - |\log 2r^2| \leq \lambda_1(g) \leq \mu_1(g). $$
\end{cor}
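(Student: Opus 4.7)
The plan is to derive the corollary directly from Lemma~\ref{l.approachgap-proj} by plugging in the lower bound on $d_{\PP(V)}(x_g^+,X_g^-)$ that comes from $(r,\eps)$-proximality.

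First I would note that if $g$ is $(r,\eps)$-proximal in $\PP(V)$, then by condition~(1) of Definition~\ref{def:r-eps-prox-proj} we have $d_{\PP(V)}(x_g^+,X_g^-)\geq 2r$; in particular $g$ is proximal in $\PP(V)$, so Lemma~\ref{l.approachgap-proj} applies and gives
$$\mu_1(g) - \bigg|\log\frac{d_{\PP(V)}(x_g^+,X_g^-)^2}{2}\bigg| \leq \lambda_1(g) \leq \mu_1(g).$$
It therefore only remains to check that $|\log(d_{\PP(V)}(x_g^+,X_g^-)^2/2)| \leq |\log 2r^2|$.

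Here I would use that the distance $d_{\PP(V)}$ is bounded above by~$1$, so that $2r \leq d_{\PP(V)}(x_g^+,X_g^-) \leq 1$ forces $r \leq 1/2$ and hence $2r^2 \leq 1/2 \leq 1$. Similarly $d_{\PP(V)}(x_g^+,X_g^-)^2/2 \leq 1/2 \leq 1$. Both quantities $2r^2$ and $d_{\PP(V)}(x_g^+,X_g^-)^2/2$ therefore lie in $(0,1]$, so their logarithms are nonpositive, and the absolute values are just the negatives. The inequality $d_{\PP(V)}(x_g^+,X_g^-)^2/2 \geq 2r^2$ then translates to $|\log(d_{\PP(V)}(x_g^+,X_g^-)^2/2)| \leq |\log 2r^2|$. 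Combining with the display above yields the lower bound $\mu_1(g) - |\log 2r^2| \leq \lambda_1(g)$, while the upper bound $\lambda_1(g) \leq \mu_1(g)$ holds unconditionally.

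There is no real obstacle: the whole point is to trade the a~priori geometric quantity $d_{\PP(V)}(x_g^+,X_g^-)$ appearing in Lemma~\ref{l.approachgap-proj} for the fixed parameter $r$ coming from the quantitative proximality assumption, and the only subtle step is being careful with the direction of the inequality when passing to absolute values of logarithms, which is handled by the observation that $r\leq 1/2$ and therefore $2r^2\leq 1$.
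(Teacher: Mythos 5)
Your proof is correct and fills in exactly the implicit deduction the paper intends: the corollary is stated without proof immediately after Lemma~\ref{l.approachgap-proj}, and the obvious route is to substitute the bound $d_{\PP(V)}(x_g^+,X_g^-)\geq 2r$ into the lemma. Your handling of the absolute values of the logarithms is the one small point that needs care, and you get it right: from $d_{\PP(V)}(x_g^+,X_g^-)\leq 1$ one deduces $r\leq 1/2$, so both $2r^2$ and $d_{\PP(V)}(x_g^+,X_g^-)^2/2$ lie in $(0,1]$, their logarithms are nonpositive, and $d_{\PP(V)}(x_g^+,X_g^-)^2/2 \geq (2r)^2/2 = 2r^2$ then gives $|\log(d_{\PP(V)}(x_g^+,X_g^-)^2/2)| \leq |\log 2r^2|$ as required.
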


\section{The Gromov hyperbolic setting} \label{sec:Gromov-hyp}

We now establish analogues of the results of Section~\ref{sec:proj} for groups of isometries of Gromov hyperbolic metric spaces.
We first introduce some notation.

\subsection{Preliminaries: Gromov hyperbolic metric spaces}\label{subsec:GH}

In the whole section, we consider a metric space $(M,d_M)$.

\subsubsection{Gromov product}

For any $m,p,q\in M$, we denote by
\begin{equation} \label{eq:Gromovproduct}
(m|p)_q = \frac{1}{2} (d_M(m,q) + d_M(p,q) - d_M(m,p))
\end{equation} 
the \emph{Gromov product} between $m$ and~$p$ based at~$q$. 
We assume that $(M,d_M)$ is \emph{Gromov hyperbolic} in sense that there exist $\delta>0$ and $o\in M$ such that any $m,p,q\in M$ satisfy the \emph{Gromov inequality}
\begin{equation} \label{eqn:Gromov-ineq}
(m|p)_o \geq \min\big( (m|q)_o, (p|q)_o\big) - \delta.
\end{equation}
If the metric space $(M,d_M)$ is proper and geodesic, then $(m|p)_q$ is comparable up to a uniform constant to the distance from $q$ to a geodesic joining $m$ and $p$ (see \cite[Def.\,III.H.1.9]{BH}, or it can also be deduced by approximation by trees, see \eg \cite[Th.\,1.1]{Coo}).
However, in this note we do \emph{not} assume $(M,d_M)$ to be proper or geodesic.

\subsubsection{Gromov boundary}

A sequence $(m_n)\in M^{\NN}$ is called a \emph{Gromov sequence} if $(m_n|m_m)_o \underset{n,m}{\longrightarrow} +\infty$.
The \emph{Gromov boundary} of $(M,d_M)$ is the set $\partial_{\infty}M$ of equivalence classes of Gromov sequences for the equivalence relation $\sim$ given by $(m_n)_{n\in\NN} \sim (p_n)_{n\in\NN}$ if $(m_n|p_n)_o \to +\infty$, or equivalently if $(m_n|p_m)_o \underset{n,m}{\longrightarrow} +\infty$.
We note that the notions of Gromov sequence and equivalence do not depend on the basepoint $o\in M$ (see \cite[Prop.\,3.3.3.(d)]{dsu17}).
In particular, the Gromov boundary $\partial_{\infty}M$ is independent of~$o$.
If $(M,d_M)$ is proper and geodesic, then $\partial_{\infty}M$ coincides with the set of equivalence classes of geodesic rays under asymptotic equivalence (see \eg \cite[Lem.\,III.H.3.13]{BH}).

\subsubsection{Topology on the Gromov bordification}

We can extend the Gromov product from $M$ to $M\cup\partial_{\infty} M$ by setting
$$(x_1|x_2)_o = \inf \liminf_n (m_{n,1}|m_{n,2})_o$$
for all $x_1,x_2\in M\cup\partial_{\infty} M$, where the infimum is taken among all sequences $(m_{n,i})\in M^{\NN}$ in the equivalence class $x_i$ if $x_i\in\partial_{\infty}M$, or $(m_{n,i})\in M^{\NN}$ constant equal to $x_i$ if $x_i\in M$ (see \cite[Def.\,III.H.3.15]{BH} and \cite[Def.\,3.4.9 \& Lem.\,3.4.10]{dsu17}). Note that this definition has slight changes in the literature, but this is well defined up to bounded errors (see \cite[Lem.\,3.4.7 \& Lem.\,3.4.10]{dsu17}).

Then (see \eg \cite[Cor.\,3.4.12]{dsu17}) there exists $C>0$ such that for any $x\in\partial_{\infty}M$ and any $m\in M$,
\begin{equation} \label{eqn:dist-and-Gromov-prod}
\big| (o|x)_m + (m|x)_o - d_M(o,m) \big| \leq C.
\end{equation}

We endow $M\cup\partial_{\infty}M$ with its natural topology, for which open subsets of $M\cup\partial_{\infty}M$ meet $M$ in open subsets, and a basis of neighborhoods of any $x \in \partial_{\infty}M$ is given by the sets $\mathcal{N}_t(x)$ of points $m \in M \cup \partial_\infty M$ such that $(m|x)_o > t$ (see \cite[\S\,3.4.2]{dsu17}).

If $(M,d_M)$ is proper and geodesic, then $M\cup\partial_{\infty}M$ (hence $\partial_{\infty}M$) is compact; we then call  $M\cup\partial_{\infty}M$ the \emph{Gromov compactification} of~$M$.
However, in general $M\cup\partial_{\infty}M$ and $\partial_{\infty}M$  are not compact when $(M,d_M)$ is not proper; we call $M\cup\partial_{\infty}M$ the \emph{Gromov bordification} of~$M$.

\subsubsection{Bourdon metric} \label{subsubsec:Bourdon-metric}

For $a>1$ and $m,p\in M$, we set
$$d_{a,o}(m,p) := \inf \sum_{n=0}^{N-1} a^{-(m_n|m_{n+1})_o},$$
where the infimum is taken over all finite sequences $(m_n)_{0\leq n\leq N}$ in~$M$ with $m_0 = m$ and $m_N = p$.
If $a>1$ is close enough to~$1$, then $d_{a,o} : M\times M\to [0,+\infty)$ extends by continuity to a function $d_{a,o} : (M\cup\partial_{\infty} M) \times (M\cup\partial_{\infty} M) \to [0,+\infty)$ satisfying
\begin{equation} \label{eqn:Bourdon-metric-estim}
\frac{1}{4} \, a^{-(x_1|x_2)_o} \leq d_{a,o}(x_1,x_2) \leq a^{-(x_1|x_2)_o}
\end{equation}
for all $x_1,x_2$; the restriction of $d_{a,o}$ to $\partial_{\infty}M$ is a complete metric called the \emph{Bourdon metric} with parameters $a$ and~$o$ (see \cite[Prop.\,3.6.8]{dsu17}).

\subsubsection{Isometries} \label{subsubsec:isom-hyp}

We denote by $\Isom(M)$ the group of isometries of the Gromov hyperbolic metric space $(M,d_M)$.
Recall (see \eg \cite[Th.\,6.1.4]{dsu17}) that any nontrivial element $g\in\Isom(M)$ is either elliptic, parabolic, or hyperbolic, where we say that $g$ is
\begin{itemize}
  \item \emph{elliptic} if the orbit $(g^n\cdot o)_{n\in\NN}$ is bounded (in that case any orbit $(g^n\cdot m)_{n\in\NN}$ with $m\in M$ is bounded by the triangle inequality, and if $(M,d_M)$ is CAT(0) this implies that $g$ has a fixed point in~$M$),
  \item \emph{parabolic} if $g$ is not elliptic and has a unique fixed point in $\partial_{\infty}M$,
  \item \emph{hyperbolic}\footnote{The terminology used in \cite{dsu17} is \emph{loxodromic}.}, or \emph{proximal in $\partial_{\infty}M$}, if $g$ has two fixed points $x_g^+$ and~$x_g^-$ in $\partial_{\infty}M$, with $g^n\cdot m\to x_g^{\pm}$ for all $m\in M\cup\partial_{\infty}M\smallsetminus\{ x_g^{\mp}\}$ as $n\to\pm\infty$.
\end{itemize}
By Gromov's classification (see \eg \cite[Th.\,6.2.3 \& Prop.\,6.2.14]{dsu17}), any subsemigroup $\Gamma$ of $\Isom(M)$ is one of the following:
\begin{itemize}
  \item \emph{elliptic} (\ie the orbit $\Gamma\cdot o$ is bounded in~$M$; as above this implies that $\Gamma\cdot m$ is bounded for any $m\in M$),
  \item \emph{parabolic} (\ie it is not elliptic, it has a global fixed point in $\partial_{\infty}M$, and it does not contain any hyperbolic isometry of~$M$),
  \item \emph{focal} (\ie it contains a hyperbolic isometry and it has a unique global fixed point in $\partial_{\infty}M$),
  \item \emph{lineal} (\ie it contains a hyperbolic isometry and all hyperbolic isometries in~$\Gamma$ have the same two fixed points in $\partial_{\infty}M$),
  \item \emph{of general type} (\ie it contains two hyperbolic isometries whose fixed points are totally distinct).
\end{itemize}
If $\Gamma$ is lineal and if $\{ x_0,x_1\}$ is the set of fixed points of the proximal elements of~$\Gamma$, we shall denote by $\Gamma_{\mathrm{fix}}$ the subsemigroup of~$\Gamma$ consisting of those elements fixing $x_0$ and~$x_1$ pointwise; it includes all elements of~$\Gamma$ which are proximal in $\partial_{\infty}M$.

\subsection{A uniform contraction property}

As in Section~\ref{subsec:intro-simult-AMS}, for any $\eps>0$ and any $x\in\partial_{\infty}M$, we denote by $b_{x}^{\eps}$ the closed ball of radius $\eps$ centered at~$x$ and by $B_{x}^{\eps}$ the complement of the open ball of radius $\eps$ centered at~$x$ for $d_{\partial_{\infty}M}$.
Here is an analogue of Proposition~\ref{prop:Lip-proj}.

\begin{prop} \label{prop:Lip-hyp}
Let $(M,d_M)$ be a Gromov hyperbolic metric space.
We endow the Gromov boundary $\partial_\infty M$ with a Bourdon metric $d_{\partial_{\infty}M} = d_{a,o}$ as in Section~\ref{subsubsec:Bourdon-metric}.
Fix $\eps>0$.
Then we can associate to any $g\in\Isom(M)$ some points $y_g^+,Y_g^-\in\partial_{\infty}M$ so that the following hold:
\begin{enumerate}
  \item\label{item:Lip-hyp-1} there exists $D\geq 1$ such that for any $g\in\Isom(M)$, the restriction of $g$ to $B^{\eps}_{Y_g^-}$ is $D a^{-d_M(o,g\cdot o)}$-Lipschitz and takes values in a ball of radius $\leq D a^{-d_M(o,g\cdot o)}$ centered at $y_g^+$ for~$d_{\partial_{\infty}M}$;
  \item\label{item:Lip-hyp-2} for any $(g_n)\in\Isom(M)^{\NN}$ and any $x^+\in\partial_{\infty}M$, we have $g_n\cdot o\to x^+$ if and only if $d_M(o,g_n\cdot o)\to +\infty$ and $y_{g_n}^+ \to x^+$;
  \item\label{item:Lip-hyp-3} for any $(g_n)\in\Isom(M)^{\NN}$ and any $x^-\in\partial_{\infty}M$, we have $g_n^{-1}\cdot o\to x^-$ if and only if $d_M(o,g_n\cdot o)\to +\infty$ and $Y_{g_n}^- \to x^-$.
\end{enumerate}
\end{prop}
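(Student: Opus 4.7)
The plan is to mimic Proposition~\ref{prop:Lip-proj}, with the Cartan $KAK$ decomposition there replaced by a construction of boundary ``axes'' that only uses two fixed points of~$\partial_\infty M$. If $|\partial_\infty M| \leq 1$ the statement is either vacuous or trivial (as $B^\epsilon_{Y_g^-}$ is then empty), so assume $|\partial_\infty M| \geq 2$ and fix $\xi_0, \xi_1 \in \partial_\infty M$; set $K_\ast := (\xi_0|\xi_1)_o$. The Gromov inequality applied to $\xi_0, \xi_1$ and any $m \in M \cup \partial_\infty M$ forces $\min((m|\xi_0)_o, (m|\xi_1)_o) \leq K_\ast + \delta$. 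For each $g \in \Isom(M)$, choose $i(g), j(g) \in \{0,1\}$ realizing this minimum for $m = g^{-1} \cdot o$ and $m = g \cdot o$ respectively, and set
\begin{equation*}
y_g^+ := g \cdot \xi_{i(g)}, \qquad Y_g^- := g^{-1} \cdot \xi_{j(g)}.
\end{equation*}

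The key technical ingredient is a change-of-basepoint identity for Gromov products of boundary points: for $\xi, \eta \in \partial_\infty M$ and $p \in M$, one has, up to a uniform additive constant coming from \eqref{eqn:dist-and-Gromov-prod},
\begin{equation*}
(\xi|\eta)_o \;=\; (\xi|\eta)_p + d_M(o, p) - (o|\xi)_p - (o|\eta)_p + O(1).
\end{equation*}
Combined with isometry invariance $(g\xi|g\eta)_{g\cdot o} = (\xi|\eta)_o$ and $(o|g\xi)_{g\cdot o} = (g^{-1}\cdot o|\xi)_o$, this yields
\begin{equation*}
(g\xi|g\eta)_o \;=\; (\xi|\eta)_o + d_M(o, g\cdot o) - (g^{-1}\cdot o|\xi)_o - (g^{-1}\cdot o|\eta)_o + O(1).
\end{equation*}

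For assertion~\eqref{item:Lip-hyp-1}, take $\xi_1, \xi_2 \in B^\epsilon_{Y_g^-}$; the Bourdon estimate \eqref{eqn:Bourdon-metric-estim} gives $(\xi_i|Y_g^-)_o \leq K_\epsilon := -\log_a(4\epsilon)$. By the choice of $j(g)$ we have $(Y_g^-|g^{-1}\cdot o)_o \geq d_M(o, g\cdot o) - K_\ast - \delta + O(1)$, so once $d_M(o, g\cdot o)$ exceeds a threshold $R_\epsilon$ depending only on $\epsilon$, a Gromov-inequality argument forces $(g^{-1}\cdot o|\xi_i)_o \leq K_\epsilon + O(1)$. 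Substituting into the displayed identity yields $(g\xi_1|g\xi_2)_o \geq (\xi_1|\xi_2)_o + d_M(o, g\cdot o) - O_\epsilon(1)$, and \eqref{eqn:Bourdon-metric-estim} converts this into the $D\, a^{-d_M(o,g\cdot o)}$-Lipschitz bound. A parallel estimate, with $y_g^+$ and $g \cdot o$ replacing $\xi_2$ and $Y_g^-$, gives the ball-containment condition. For $d_M(o, g\cdot o) < R_\epsilon$ both assertions hold automatically after enlarging $D$ by the bounded factor $a^{R_\epsilon}$, since $d_{\partial_\infty M} \leq 1$.

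Assertions~\eqref{item:Lip-hyp-2} and~\eqref{item:Lip-hyp-3} are consequences of the quantitative closeness $(y_g^+|g \cdot o)_o \geq d_M(o, g\cdot o) - O(1)$ (and its analog for $Y_g^-$ and $g^{-1} \cdot o$) together with Gromov's inequality and the description of convergence in the Gromov bordification via Gromov products: if $g_n \cdot o \to x^+$ then $d_M(o, g_n \cdot o) \to +\infty$ and the pairwise Gromov products force $y_{g_n}^+ \to x^+$, and conversely. The main obstacle is the rigorous handling of additive errors in the non-proper, non-geodesic setting, especially in the basepoint-change identity above, where standard geodesic-approximation tools are unavailable and one must argue directly with the liminf-definition of Gromov products on the boundary.
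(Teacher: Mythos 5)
Your proof is conceptually correct and takes a genuinely different route from the paper's. The paper defines $y_g^+\in\Shad_o(g\cdot o,\sigma)$ and $Y_g^-\in\Shad_o(g^{-1}\cdot o,\sigma)$ using the abstract nonemptiness of shadows \cite[Obs.~4.5.2]{dsu17}, and then imports the Lipschitz estimate wholesale from \cite[Lem.~4.5.6]{dsu17}. You instead fix two boundary points $\xi_0,\xi_1$, use the pigeonhole consequence of the Gromov inequality to choose $i(g),j(g)$, and set $y_g^+:=g\cdot\xi_{i(g)}$, $Y_g^-:=g^{-1}\cdot\xi_{j(g)}$. Unwinding the definitions, your points do land in the shadows $\Shad_o(g\cdot o,K_\ast+\delta)$ and $\Shad_o(g^{-1}\cdot o,K_\ast+\delta)$ — so your construction can be seen as an explicit witness of shadow nonemptiness, with the advantage that $y_g^+,Y_g^-$ do not depend on $\eps$ — but your derivation of the Lipschitz bound via the basepoint-change identity
\[
(\xi|\eta)_o = (\xi|\eta)_p + d_M(o,p) - (o|\xi)_p - (o|\eta)_p + O(1)
\]
is in effect a reproof of the crucial estimate that the paper simply cites. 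This makes your argument more self-contained, at the cost of carrying out the $O(1)$-bookkeeping that \cite{dsu17} already packages; the main technical subtlety you correctly flag (the liminf-definition of Gromov products at the boundary makes the basepoint-change identity an approximate equality requiring care in both directions) is precisely what is hidden inside the cited lemma, and you sketch it rather than prove it. Parts~(2) and~(3) in your sketch coincide with the paper's two-step Gromov-inequality argument in Proposition~\ref{prop:Lip-hyp-Shad}.\eqref{item:Lip-hyp-Shad-2}.

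Two small corrections. First, from \eqref{eqn:Bourdon-metric-estim} the condition $d_{\partial_\infty M}(\xi,Y_g^-)\geq\eps$ gives $\eps\leq a^{-(\xi|Y_g^-)_o}$, hence $(\xi|Y_g^-)_o\leq -\log_a\eps$; your constant $K_\eps$ should be $-\log_a\eps$, not $-\log_a(4\eps)$ (the factor $\tfrac14$ in \eqref{eqn:Bourdon-metric-estim} goes the other way). Second, the reduction in the regime $d_M(o,g\cdot o)<R_\eps$ is not quite ``automatic'' as stated: the ball-containment trivializes once $D a^{-R_\eps}\geq 1$ because $d_{\partial_\infty M}\leq 1$, but the $\eps$-Lipschitz bound does not follow from $d_{\partial_\infty M}\leq 1$ alone. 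You need the additional (easy) observation that every $g\in\Isom(M)$ is globally $a^{d_M(o,g\cdot o)+O(1)}$-Lipschitz on $\partial_\infty M$ — which follows from $|\mathcal B_\xi(o,p)|\leq d_M(o,p)+O(1)$ in your basepoint-change identity — after which enlarging $D$ to dominate $a^{2R_\eps+O(1)}$ handles the small-displacement case.
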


In particular, \eqref{item:Lip-hyp-2} and~\eqref{item:Lip-hyp-3} imply that for any hyperbolic element $g\in\Isom(M)$ we have $y_{g^n}^+ \to x_g^+$ and $Y_{g^n}^- \to X_g^-$ as $n\to +\infty$.

In order to prove Proposition~\ref{prop:Lip-hyp}, for $m,p\in M$ and $\sigma>0$ we consider, as in \cite[Def.\,4.5.1]{dsu17}, the \emph{shadow cast by $p$ from the light source $m$ with parameter~$\sigma$}, which is defined to be
$$\Shad_m(p,\sigma) := \{ x\in\partial_{\infty}M ~|~ (m|x)_p \leq \sigma\}.$$
It is a nonempty closed subset of $\partial_{\infty}M$ (see \cite[Obs.\,4.5.2]{dsu17}).

\begin{lema} \label{lem:Shad}
Let $(M,d_M)$ be a Gromov hyperbolic metric space.
We endow the Gromov boundary $\partial_\infty M$ with a Bourdon metric $d_{\partial_{\infty}M} = d_{a,o}$ as in Section~\ref{subsubsec:Bourdon-metric}.
Let $\delta,C>0$ be given by \eqref{eqn:Gromov-ineq} and \eqref{eqn:dist-and-Gromov-prod}.
Then for any $m,p\in M$ and any $\sigma>0$,
\begin{enumerate}[(i)]
  \item\label{item:Shad-1} (see \cite[Lem.\,4.5.7]{dsu17}) $\partial_{\infty}M \smallsetminus \Shad_m(o,\sigma)$ has diameter $\leq a^{\delta-\sigma}$;
  \item\label{item:Shad-2} if $\sigma \geq d_M(m,p) + C$, then $\Shad_m(p,\sigma) = \partial_{\infty}M$;
  \item\label{item:Shad-3} (see \cite[Lem.\,4.5.8]{dsu17}) $\Shad_o(p,\sigma)$ has diameter $\leq a^{C+\delta+\sigma-d_M(o,p)}$;
  \item\label{item:Shad-4} if $d_M(o,p) > 2\sigma + C$, then $\Shad_o(p,\sigma) \subset \partial_{\infty}M \smallsetminus \Shad_p(o,\sigma)$.
\end{enumerate}
\end{lema}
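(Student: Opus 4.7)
Parts \eqref{item:Shad-1} and \eqref{item:Shad-3} are quoted verbatim from \cite[Lem.\,4.5.7--4.5.8]{dsu17}, and I would simply cite those references. Parts \eqref{item:Shad-2} and \eqref{item:Shad-4} are short consequences of the bordification estimate \eqref{eqn:dist-and-Gromov-prod}, and the plan is to treat them in turn.

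For \eqref{item:Shad-2}, I would apply \eqref{eqn:dist-and-Gromov-prod} with the basepoint $m$ in the role of~$o$; this is legitimate with the same constant~$C$ (up to absorbing a universal additive error if necessary), because the underlying Gromov four-point condition is basepoint-independent. This gives $(m|x)_p + (p|x)_m \leq d_M(m,p) + C$ for every $x\in\partial_{\infty}M$. The extended Gromov product $(p|x)_m$ is, by definition, a $\liminf$ of quantities $\tfrac12(d_M(p,m)+d_M(p_n,m)-d_M(p,p_n))$, each of which is nonnegative by the triangle inequality, so $(p|x)_m\geq 0$. Hence $(m|x)_p \leq d_M(m,p) + C \leq \sigma$ for all $x \in \partial_\infty M$, which is precisely $\Shad_m(p,\sigma) = \partial_\infty M$.

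For \eqref{item:Shad-4}, I would apply \eqref{eqn:dist-and-Gromov-prod} directly: for every $x \in \partial_{\infty}M$ one has $(o|x)_p + (p|x)_o \geq d_M(o,p) - C$. If $x \in \Shad_o(p,\sigma)$, so that $(o|x)_p \leq \sigma$, combining with the hypothesis $d_M(o,p) > 2\sigma + C$ yields $(p|x)_o > \sigma$, and therefore $x \notin \Shad_p(o,\sigma)$. The only mild subtlety in the whole argument is the change of basepoint required in part \eqref{item:Shad-2}, but this is a routine consequence of Gromov hyperbolicity and I do not expect it to pose any real obstacle.
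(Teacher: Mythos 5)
Your proposal is correct and in essence follows the paper's route. For parts (ii) and (iv) your arguments are the same as the paper's (for (iv), line-for-line); for (ii), the paper also just invokes \eqref{eqn:dist-and-Gromov-prod} together with nonnegativity of the Gromov product, without spelling out the basepoint change you flag, so your caution there is reasonable and the resolution you give is the intended one. The one real difference is parts (i) and (iii): the paper does not rely solely on the citation to \cite[Lem.\,4.5.7--4.5.8]{dsu17} but rederives those diameter bounds in two lines each from the Gromov inequality \eqref{eqn:Gromov-ineq} and the Bourdon estimate \eqref{eqn:Bourdon-metric-estim}, so that the explicit constants $a^{\delta-\sigma}$ and $a^{C+\delta+\sigma-d_M(o,p)}$ are guaranteed to match the paper's normalization of $d_{a,o}$; if you cite the external source you should check the constants agree, but the underlying argument is the same.
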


\begin{proof}[Proof of Lemma~\ref{lem:Shad}]
\eqref{item:Shad-1} Let $x_1,x_2 \in \partial_{\infty}M \smallsetminus \Shad_m(o,\sigma)$, \ie $(m|x_i)_o > \sigma$ for all $i\in\{1,2\}$.
By the Gromov inequality \eqref{eqn:Gromov-ineq} we have $(x_1|x_2)_o \geq \min_i (m|x_i)_o - \delta > \sigma - \delta$, hence\linebreak $d_{\partial_{\infty}M}(x_1,x_2) <\nolinebreak a^{\delta-\sigma}$ by \eqref{eqn:Bourdon-metric-estim}.

\eqref{item:Shad-2} By \eqref{eqn:dist-and-Gromov-prod}, we have $(m|x)_p \leq d_M(m,p) + C$ for all $x\in\partial_{\infty}M$.

\eqref{item:Shad-3} Let $x_1,x_2 \in \Shad_o(p,\sigma)$, \ie $(o|x_i)_p \leq \sigma$ for all $i\in\{1,2\}$.
By \eqref{eqn:dist-and-Gromov-prod} we have $(p|x_i)_o \geq d_M(o,p) - \sigma - C$.
The Gromov inequality \eqref{eqn:Gromov-ineq} then yields $(x_1|x_2)_o \geq \min_i (p|x_i)_o - \delta > d_M(o,p) - \sigma - C - \delta$, hence $d_{\partial_{\infty}M}(x_1,x_2) < a^{C+\delta+\sigma-d_M(o,p)}$ by \eqref{eqn:Bourdon-metric-estim}.

\eqref{item:Shad-4} Let $x \in \Shad_o(p,\sigma)$, \ie $(o|x)_p \leq \sigma$.
By \eqref{eqn:dist-and-Gromov-prod} we have $(p|x)_o \geq d_M(o,p) - \sigma - C$.
Therefore, if $d_M(o,p) > 2\sigma + C$, then $(p|x)_o > \sigma$, \ie $x \in \partial_{\infty}M \smallsetminus \Shad_p(o,\sigma)$.
\end{proof}

By $\Isom(M)$-invariance of the Gromov product, $g\cdot\nolinebreak\Shad_{g^{-1}\cdot o}(o,\sigma) = \Shad_o(g\cdot o,\sigma)$ for all $g\in\Isom(M)$.

\begin{prop} \label{prop:Lip-hyp-Shad}
Let $(M,d_M)$ be a Gromov hyperbolic metric space.
We endow the Gromov boundary $\partial_\infty M$ with a Bourdon metric $d_{\partial_{\infty}M} = d_{a,o}$ as in Section~\ref{subsubsec:Bourdon-metric}.
Then
\begin{enumerate}
  \item\label{item:Lip-hyp-Shad-1} there exists $D'\geq 1$ such that for any $\sigma>0$ and any $g\in\Isom(M)$, the set $\partial_{\infty}M \smallsetminus \Shad_{g^{-1}\cdot o}(o,\sigma)$ has diameter $\leq D' a^{-\sigma}$, the restriction of $g$ to $\Shad_{g^{-1}\cdot o}(o,\sigma)$ is\linebreak $D' a^{2\sigma-d_M(o,g\cdot o)}$-Lipschitz, and its image $g\cdot\Shad_{g^{-1}\cdot o}(o,\sigma) = \Shad_o(g\cdot o,\sigma)$ has diameter $\leq D' a^{\sigma-d_M(o,g\cdot o)}$ for~$d_{\partial_{\infty}M}$;
  \item\label{item:Lip-hyp-Shad-2} for any $\sigma>0$, any $x^+\in\partial_{\infty}M$, any $(g_n)\in\Isom(M)^{\NN}$, and any $(x_n)\in (\partial_{\infty}M)^{\NN}$ with $x_n\in\Shad_o(g_n\cdot o,\sigma)$ for all~$n$, we have $g_n\cdot o\to x^+$ if and only if $d_M(o,g_n\cdot o)\to +\infty$ and $x_n \to x^+$.
\end{enumerate}
\end{prop}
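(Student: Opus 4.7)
The plan is to deduce the proposition from Lemma~\ref{lem:Shad} together with the isometry-invariance of the Gromov product. Two of the three estimates in~\eqref{item:Lip-hyp-Shad-1} are essentially immediate: the diameter bound for $\partial_{\infty}M \smallsetminus \Shad_{g^{-1}\cdot o}(o,\sigma)$ follows from Lemma~\ref{lem:Shad}\eqref{item:Shad-1} applied with $m := g^{-1}\cdot o$, and the diameter bound for the image $g\cdot \Shad_{g^{-1}\cdot o}(o,\sigma) = \Shad_o(g\cdot o,\sigma)$ (the identity being noted just before the statement) follows from Lemma~\ref{lem:Shad}\eqref{item:Shad-3} applied with $p := g\cdot o$. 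Both are of the required form with a constant depending only on $\delta$ and~$C$.

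The main content is the Lipschitz bound. Fix $x_1,x_2 \in \Shad_{g^{-1}\cdot o}(o,\sigma)$ and set $\xi_i := g\cdot x_i$ and $d := d_M(o,g\cdot o)$. Isometry-invariance of the Gromov product gives $(\xi_1|\xi_2)_{g\cdot o} = (x_1|x_2)_o$, so by \eqref{eqn:Bourdon-metric-estim} it suffices to show that
\[ (\xi_1|\xi_2)_o \geq (x_1|x_2)_o + d - 2\sigma - C_1 \]
for some constant $C_1$ depending only on $\delta$ and~$C$. I will prove this by picking Gromov sequences $(p_n^{(i)}) \to \xi_i$ and using the elementary identity
\begin{equation*}
(p_n^{(1)}|p_n^{(2)})_o - (p_n^{(1)}|p_n^{(2)})_{g\cdot o} = \tfrac{1}{2}\sum_{i=1,2}\bigl(d_M(o,p_n^{(i)}) - d_M(g\cdot o,p_n^{(i)})\bigr).
\end{equation*}
The shadow condition $(o|\xi_i)_{g\cdot o}\leq\sigma$, combined with the bounded-error equivalence between definitions of the Gromov product at infinity, forces $\limsup_n (o|p_n^{(i)})_{g\cdot o}\leq\sigma + O(1)$, which upon expanding the Gromov product yields $\liminf_n \bigl(d_M(o,p_n^{(i)}) - d_M(g\cdot o,p_n^{(i)})\bigr) \geq d - 2\sigma - O(1)$. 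Taking liminf in the identity and then infimum over sequences gives the claimed bound, and inserting it into \eqref{eqn:Bourdon-metric-estim} produces a Lipschitz constant of the required form $D'a^{2\sigma-d}$ after enlarging $D'$ if necessary.

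For part~\eqref{item:Lip-hyp-Shad-2}, both implications follow from two applications of the Gromov inequality \eqref{eqn:Gromov-ineq} through the intermediate point $g_n\cdot o$. The shadow hypothesis together with \eqref{eqn:dist-and-Gromov-prod} always yields $(g_n\cdot o|x_n)_o \geq d_M(o,g_n\cdot o) - \sigma - C$. If $g_n\cdot o\to x^+$ in the Gromov topology, then $(g_n\cdot o|x^+)_o\to +\infty$; since this Gromov product is bounded above by $d_M(o,g_n\cdot o)$, we deduce $d_M(o,g_n\cdot o)\to +\infty$, and then \eqref{eqn:Gromov-ineq} applied to the triple $x_n, g_n\cdot o, x^+$ gives $(x_n|x^+)_o\to +\infty$, \ie $x_n\to x^+$ in $d_{\partial_{\infty}M}$ by \eqref{eqn:Bourdon-metric-estim}. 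Conversely, if $d_M(o,g_n\cdot o)\to +\infty$ and $x_n\to x^+$, then both $(g_n\cdot o|x_n)_o$ and $(x_n|x^+)_o$ tend to $+\infty$, so \eqref{eqn:Gromov-ineq} gives $(g_n\cdot o|x^+)_o\to +\infty$, \ie $g_n\cdot o\to x^+$ in the Gromov topology.

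The main obstacle is the change-of-basepoint step in the Lipschitz bound. The naive comparison $\bigl|(\xi_1|\xi_2)_o - (\xi_1|\xi_2)_{g\cdot o}\bigr| \leq d$ would yield no contraction whatsoever. The key point is that the shadow hypothesis forces the sequences approximating $\xi_1,\xi_2$ to head out roughly towards $g\cdot o$, so that the Busemann-type differences $d_M(o,p_n^{(i)}) - d_M(g\cdot o,p_n^{(i)})$ are in fact close to $+d$ up to an error of $2\sigma$, turning the would-be loss into a gain of $d - 2\sigma$.
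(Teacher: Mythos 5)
Your proof is correct. For the two diameter bounds in part~\eqref{item:Lip-hyp-Shad-1} and for part~\eqref{item:Lip-hyp-Shad-2}, your argument coincides with the paper's: the diameter bounds follow from Lemma~\ref{lem:Shad}\eqref{item:Shad-1},\eqref{item:Shad-3}, and the equivalence in~\eqref{item:Lip-hyp-Shad-2} is proved by the same two applications of the Gromov inequality~\eqref{eqn:Gromov-ineq} through the intermediate sequence $(g_n\cdot o)$, using~\eqref{eqn:dist-and-Gromov-prod} to pass between the shadow condition and the Gromov products.

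The genuine difference is in the Lipschitz bound. The paper simply cites \cite[Lem.\,4.5.6]{dsu17} for the $C'a^{2\sigma-d_M(o,g\cdot o)}$-Lipschitz estimate and then takes $D':=\max(C',a^{C+\delta})$, whereas you give a self-contained derivation: writing $\xi_i=g\cdot x_i$, you use the isometry-invariance $(\xi_1|\xi_2)_{g\cdot o}=(x_1|x_2)_o$ together with the exact change-of-basepoint identity for the Gromov product, and convert the shadow hypothesis $(o|\xi_i)_{g\cdot o}\leq\sigma$ (via the bounded-error comparison for boundary Gromov products) into the lower bound $\liminf_n\bigl(d_M(o,p_n^{(i)})-d_M(g\cdot o,p_n^{(i)})\bigr)\geq d_M(o,g\cdot o)-2\sigma-O(1)$. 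The only place that needs a small word of justification is the passage from sequences to boundary points: for \emph{every} pair of approximating sequences you get $\liminf_n(p_n^{(1)}|p_n^{(2)})_o\geq\liminf_n(p_n^{(1)}|p_n^{(2)})_{g\cdot o}+\liminf_n\tfrac12\sum_i(\cdots)\geq(\xi_1|\xi_2)_{g\cdot o}+d-2\sigma-O(1)$, using superadditivity of $\liminf$ and the fact that $(\xi_1|\xi_2)_{g\cdot o}$ is an infimum over all such sequences; taking the infimum over sequences for $(\xi_1|\xi_2)_o$ then closes the argument. This is a worthwhile replacement: it makes the proposition self-contained at the cost of a short Busemann-type computation, and it makes explicit exactly how the factor $a^{-d_M(o,g\cdot o)}$ arises, which the citation obscures.
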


\begin{proof}[Proof of Proposition~\ref{prop:Lip-hyp-Shad}]
\eqref{item:Lip-hyp-Shad-1} By \cite[Lem.\,4.5.6]{dsu17} and its proof, the restriction of $g$ to\linebreak $\Shad_{g^{-1}\cdot o}(o,\sigma)$ is $C' a^{2\sigma-d_M(o,g\cdot o)}$-Lipschitz for some constant $C'>0$ independent of~$\sigma$.
By Lemma~\ref{lem:Shad}.\eqref{item:Shad-1}--\eqref{item:Shad-3}, we can then take $D' := \max(C',a^{C+\delta})$, where $\delta,C>0$ are given by \eqref{eqn:Gromov-ineq} and \eqref{eqn:dist-and-Gromov-prod}.

\eqref{item:Lip-hyp-Shad-2} We first note that $g_n\cdot o \to x^+$ is equivalent to $(g_n\cdot o|x^+)_o \to +\infty$.
If this is satisfied, then by \eqref{eqn:dist-and-Gromov-prod} we have $d_M(o,g_n\cdot o) \to +\infty$.

We now assume $d_M(o,g_n\cdot o) \to +\infty$.
For any $n$ we have $(o|x_n)_{g_n\cdot o} \leq \sigma$, hence $(g_n\cdot o|x_n)_o \to +\infty$ by \eqref{eqn:dist-and-Gromov-prod}.
Let us check that $(g_n\cdot o|x^+)_o \to +\infty$ if and only if $(x_n|x^+)_o \to +\infty$.

On the one hand, by the Gromov inequality \eqref{eqn:Gromov-ineq} we have $$(x_n|x^+)_o \geq \min( (g_n\cdot o|x_n)_o, (g_n\cdot o|x^+)_o) - \delta.$$
\noindent Therefore, if $(g_n\cdot o|x^+)_o \to +\infty$, then $(x_n|x^+)_o \to +\infty$.

On the other hand, by the Gromov inequality \eqref{eqn:Gromov-ineq} we have $$(g_n\cdot o|x^+)_o \geq \min( (g_n\cdot o|x_n)_o, (x^+|x_n)_o) - \delta.$$
\noindent Therefore, if $(x^+|x_n)_o \to +\infty$, then $(g_n\cdot o|x^+)_o \to +\infty$.
\end{proof}

\begin{proof}[Proof of Proposition~\ref{prop:Lip-hyp}]
Let $C,D'>0$ be given respectively by \eqref{eqn:dist-and-Gromov-prod} and Proposition~\ref{prop:Lip-hyp-Shad}.\eqref{item:Lip-hyp-Shad-1}.
Choose $\sigma>0$ large enough so that $D' a^{-\sigma} \leq \eps$.
By Proposition~\ref{prop:Lip-hyp-Shad}.\eqref{item:Lip-hyp-Shad-1}, the set $\partial_{\infty}M \smallsetminus \Shad_{g^{-1}\cdot o}(o,\sigma)$ has diameter $\leq\eps$ for all $g\in\Isom(M)$, and so we can choose $Y_g^- \in \partial_{\infty}M$ so that $\partial_{\infty}M \smallsetminus \Shad_{g^{-1}\cdot o}(o,\sigma) \subset b^{\eps}_{Y_g^-}$; equivalently, $B^{\eps}_{Y_g^-} \subset \Shad_{g^{-1}\cdot o}(o,\sigma)$.
In fact, by Lemma~\ref{lem:Shad}.\eqref{item:Shad-4}, for any $g\in\Isom(M)$ with $d_M(o,g\cdot o) > 2\sigma + C$ we have $\Shad_o(g^{-1}\cdot o,\sigma) \subset \partial_{\infty}M \smallsetminus \Shad_{g^{-1}\cdot o}(o,\sigma)$, and so we may and do choose $Y_g^- \in \Shad_o(g^{-1}\cdot o,\sigma)$ in that case.
On the other hand, we choose any $y_g^+ \in \Shad_o(g\cdot o,\sigma)$.

\eqref{item:Lip-hyp-1} Set $D := D' a^{2\sigma}$.
By Proposition~\ref{prop:Lip-hyp-Shad}.\eqref{item:Lip-hyp-Shad-1}, the restriction of $g$ to $B^{\eps}_{Y_g^-} \subset \Shad_{g^{-1}\cdot o}(o,\sigma)$ is $D a^{-d_M(o,g\cdot o)}$-Lipschitz, and the image $g\cdot\Shad_{g^{-1}\cdot o}(o,\sigma) = \Shad_o(g\cdot o,\sigma)$ has diameter $\leq D a^{-d_M(o,g\cdot o)}$.
In particular, $\Shad_o(g\cdot o,\sigma)$ is contained in the ball of radius $\leq D a^{-d_M(o,g\cdot o)}$ centered at~$y_g^+$.

\eqref{item:Lip-hyp-2} This follows immediately from Proposition~\ref{prop:Lip-hyp-Shad}.\eqref{item:Lip-hyp-Shad-2}, taking $x_n = y_{g_n}^+$.

\eqref{item:Lip-hyp-3} This follows immediately from Proposition~\ref{prop:Lip-hyp-Shad}.\eqref{item:Lip-hyp-Shad-2} applied to $g_n^{-1}$ instead of~$g_n$, taking $x_n = Y_{g_n}^-$, which by construction belongs to $\Shad_o(g_n^{-1}\cdot o,\sigma)$ for all large enough~$n$.
\end{proof}

\subsection{A sufficient condition for $(r,\eps)$-proximality} \label{subsec:suff-prox-hyp}

Recall the notion of $(r,\eps)$-proximality in $\partial_{\infty}M$ from Definition~\ref{def:r-eps-prox-hyp}.

\begin{remark} \label{rem:r-eps-r'-eps'-prox-hyp}
If $g\in\Isom(M)$ is $(r,\eps)$-proximal in $\partial_{\infty}M$, then it is also $(r',\eps')$-proximal in $\partial_{\infty}M$ for all $r\geq r'\geq\eps'\geq\eps>0$; moreover, $r$ must be $<1$ since $d_{\partial_{\infty}M}$ is bounded above by~$1$.
\end{remark}

The following holds with a similar proof to Lemma~\ref{l.critprox-proj}.

\begin{lema} \label{l.critprox-hyp}
Let $(M,d_M)$ be a Gromov hyperbolic metric space.
We endow the Gromov boundary $\partial_\infty M$ with a Bourdon metric $d_{\partial_{\infty}M} = d_{a,o}$ as in Section~\ref{subsubsec:Bourdon-metric}.
Let $r\geq \eps>0$.
For $g\in\Isom(M)$, if there exist $w^+,w^- \in \partial_{\infty}M$ such that
\begin{enumerate}
  \item $d_{\partial_{\infty}M}(w^+,w^-) \geq 6r$,
  \item $g \cdot B_{w^-}^{\eps} \subset b_{w^+}^{\eps}$,
  \item the restriction of $g$ to $B_{w^-}^{\eps}$ is $\varepsilon$-Lipschitz, 
\end{enumerate}
then $g$ is $(2r,2\eps)$-proximal and $d_{\partial_{\infty}M}(w^+,x_g^+)\leq \eps$ and $d_{\partial_{\infty}M}(w^-,x_g^-) \leq \eps$.
\end{lema}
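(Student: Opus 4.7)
My plan is to mirror the proof of the projective Lemma~\ref{l.critprox-proj}: construct the attracting fixed point $x_g^+$ via Banach's fixed point theorem applied to $g$ acting as a contraction on the small ball $b_{w^+}^{\eps}$, and then locate $x_g^-$ outside the attraction basin of $x_g^+$ and therefore near $w^-$. The essential new difficulty compared to the projective case is that, whereas in $\PP(V)$ the existence of an attracting fixed point characterizes linear proximality, in the Gromov hyperbolic setting one must first rule out that $g$ is elliptic or parabolic in order to even speak of the pair $(x_g^+, x_g^-)$.

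First, from condition~(1) and $r \geq \eps$, the triangle inequality gives $b_{w^+}^{\eps} \subset B_{w^-}^{\eps}$, so by~(2) we have $g \cdot b_{w^+}^{\eps} \subset b_{w^+}^{\eps}$, and by~(3) the restriction of $g$ to $b_{w^+}^{\eps}$ is $\eps$-Lipschitz. Since $d_{\partial_{\infty}M} \leq 1$, condition~(1) forces $\eps \leq r \leq 1/6$, so this is a genuine contraction of the complete metric space $(b_{w^+}^{\eps}, d_{\partial_{\infty}M})$; Banach's theorem gives a unique fixed point $x^+ \in b_{w^+}^{\eps}$, with $d_{\partial_{\infty}M}(g^n p, x^+) \leq \eps^n d_{\partial_{\infty}M}(p, x^+)$ for all $p \in b_{w^+}^{\eps}$, and hence, using~(2) once, for all $p \in B_{w^-}^{\eps}$.

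The crux, and main obstacle, is to show that $g$ is hyperbolic. The Bourdon estimate \eqref{eqn:Bourdon-metric-estim} converts the exponential contraction above into a linear lower bound $(g^n p | x^+)_o \geq n \log_a(1/\eps) - O(1)$ for any fixed $p \in b_{w^+}^{\eps} \smallsetminus \{x^+\}$. Using $\Isom(M)$-invariance of the Gromov product together with $g \cdot x^+ = x^+$, this rewrites as $(p | x^+)_{g^{-n}\cdot o} \geq n \log_a(1/\eps) - O(1)$, and the elementary comparison $(p | x^+)_q \leq (p | x^+)_o + d_M(o, q) + O(1)$ (obtained by comparing $(m_{k,p} | m_{k,+})_q$ and $(m_{k,p} | m_{k,+})_o$ via the triangle inequality and passing to the $\liminf$) yields linear growth $d_M(o, g^n\cdot o) \geq n \log_a(1/\eps) - O(1)$. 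This positive translation length rules out both the elliptic and parabolic cases of the classification recalled in Section~\ref{subsubsec:isom-hyp} (see also \cite[Th.\,6.1.4]{dsu17}), so $g$ is hyperbolic.

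By uniqueness of the attracting fixed point of $g$ in $b_{w^+}^{\eps}$ we have $x_g^+ = x^+$, giving $d_{\partial_{\infty}M}(w^+, x_g^+) \leq \eps$. Since $x_g^-$ is fixed by~$g$ and therefore not attracted to $x_g^+$, it cannot belong to $B_{w^-}^{\eps}$ (every point of which is sent by $g$ into $b_{w^+}^{\eps}$ and thus into the attraction basin of $x_g^+$), whence $d_{\partial_{\infty}M}(w^-, x_g^-) \leq \eps$. The three conditions of $(2r, 2\eps)$-proximality then follow by exactly the same triangle inequality bookkeeping as in Lemma~\ref{l.critprox-proj}: $d_{\partial_{\infty}M}(x_g^+, x_g^-) \geq 6r - 2\eps \geq 4r$; the inclusion $B_{x_g^-}^{2\eps} \subset B_{w^-}^{\eps}$ combined with~(2) and $d_{\partial_{\infty}M}(w^+, x_g^+) \leq \eps$ yields $g \cdot B_{x_g^-}^{2\eps} \subset b_{x_g^+}^{2\eps}$; and $g|_{B_{x_g^-}^{2\eps}}$ is $\eps$-Lipschitz by~(3), hence also $2\eps$-Lipschitz.
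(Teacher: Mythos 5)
Your proof is correct, and it follows the same blueprint that the paper gestures at (the paper simply says ``with a similar proof to Lemma~\ref{l.critprox-proj}''): use $b_{w^+}^{\eps}\subset B_{w^-}^{\eps}$ to turn $g$ into an $\eps$-contraction of the complete metric space $b_{w^+}^{\eps}$, produce the attracting fixed point by Banach, then locate the repeller near $w^-$ and do the triangle-inequality bookkeeping. The genuinely new and necessary step you supply, which the paper leaves implicit, is the verification that $g$ is in fact hyperbolic (and not elliptic or parabolic), since in the Gromov-hyperbolic setting an attracting fixed point at infinity does not a priori force this the way a contracting fixed point in $\PP(V)$ forces a spectral gap. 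Your translation-length argument is the right tool: the exponential contraction $d_{\partial_\infty M}(g^n p, x^+)\le 2\eps^{n+1}$ for $p\neq x^+$ combined with \eqref{eqn:Bourdon-metric-estim} gives $(g^np\,|\,x^+)_o \ge n\log_a(1/\eps)-O(1)$, $\Isom(M)$-invariance of the Gromov product together with $g\cdot x^+=x^+$ rewrites this at basepoint $g^{-n}\cdot o$, and the elementary comparison $(x_1|x_2)_q\le (x_1|x_2)_o+d_M(o,q)$ (true with no error term; the $O(1)$ you allow is harmless) then gives $d_M(o,g^n\cdot o)\ge n\log_a(1/\eps)-O(1)$, i.e.\ positive asymptotic translation length, which by \cite[Th.\,6.1.4]{dsu17} forces $g$ to be hyperbolic.

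Two small points. First, you need $b_{w^+}^{\eps}$ to contain some $p\neq x^+$; this is the generic situation, but there is a degenerate case (e.g.\ $b_{w^+}^{\eps}=\{x^+\}$, as can happen when $\partial_\infty M$ is very small) where the hypotheses of the lemma can be met by a non-hyperbolic isometry, so the lemma as stated is only meaningful outside such degeneracies — this is a gap in the statement that the paper inherits by pointing to the projective proof, not a defect specific to your argument. Second, you tacitly use $d_M(o,g^{-n}\cdot o)=d_M(o,g^n\cdot o)$ to convert the estimate at basepoint $g^{-n}\cdot o$ into growth of $|g^n|_M$; this is of course immediate for isometries but worth saying. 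With those caveats, the proof is complete and correct.
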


\subsection{A consequence of $(r,\eps)$-proximality}

Given our basepoint $o\in M$, recall the notation $|g|_M$ and $|g|_{M,\infty}$ from \eqref{eqn:length-stable-length}.
The following is analogous to Lemma~\ref{l.approachgap-proj}.

\begin{lema} \label{l.approachgap-hyp}
Let $(M,d_M)$ be a Gromov hyperbolic metric space and $o\in M$ a basepoint.
Then there exists $C'>0$ such that for any hyperbolic $g\in\Isom(M)$,
$$|g|_M - 2 (x_g^-|x_g^+)_o - C' \leq |g|_{M,\infty} \leq |g|_M. $$
\end{lema}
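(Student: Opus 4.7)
\textbf{Proof plan for Lemma~\ref{l.approachgap-hyp}.}
The plan is to prove the lower bound $|g|_{M,\infty} \geq |g|_M - 2(x_g^-|x_g^+)_o - C'$ (the reverse inequality is immediate from subadditivity of $n \mapsto |g^n|_M$ and Fekete's lemma). Set $T := |g|_{M,\infty}$, $\beta := (x_g^-|x_g^+)_o$, and $a_k := |g^k|_M - kT \geq 0$; the goal is then to show $a_1 \leq 2\beta + C'$ for some universal constant~$C'$ depending only on the Gromov hyperbolicity constant~$\delta$ and on the constant in \eqref{eqn:dist-and-Gromov-prod}. By $\Isom(M)$-invariance of the Gromov product together with $d_M(g^{-k}\cdot o, g^l\cdot o) = |g^{k+l}|_M$, one has the key identity
\[
(g^{-k}\cdot o \mid g^l\cdot o)_o \;=\; \tfrac{1}{2}\bigl(|g^k|_M + |g^l|_M - |g^{k+l}|_M\bigr) \;=\; \tfrac{1}{2}(a_k + a_l - a_{k+l}) \qquad (k, l \geq 0),
\]
which is the algebraic backbone of the argument.

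The first step is to stabilize the $a_n$. Since $(g^{-k}\cdot o)_k$ and $(g^l\cdot o)_l$ are Gromov sequences converging respectively to the distinct points $x_g^-$ and $x_g^+$ of $\partial_\infty M$, the Gromov inequality~\eqref{eqn:Gromov-ineq} applied with a further element of each sequence as a bridging point implies that $(g^{-k}\cdot o \mid g^l\cdot o)_o$ varies by at most~$\delta$ when either index is pushed past some threshold, and that its stable value lies within $O(\delta)$ of~$\beta$. Hence there exist $N$ and $C_0 = O(\delta)$ with $(g^{-k}\cdot o \mid g^l\cdot o)_o \in [\beta - C_0,\, \beta + C_0]$ for all $k, l \geq N$. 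Specializing to $k = l = n$ gives $a_{2n} = 2 a_n - 2\beta + O(\delta)$ for $n \geq N$; iterating this along the geometric sequence $n, 2n, 4n, \ldots$, and using that $a_m/m \to 0$, forces $a_n = 2\beta + O(\delta)$ for all $n$ sufficiently large.

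To bound $a_1$, I would apply the Gromov inequality at basepoint~$o$ to the triple $g^{-n}\cdot o$, $g^n\cdot o$, $g^{-1}\cdot o$ for some large~$n$ coming from Step~1:
\[
(g^{-n}\cdot o \mid g^n\cdot o)_o \;\geq\; \min\bigl((g^{-n}\cdot o \mid g^{-1}\cdot o)_o,\ (g^{-1}\cdot o \mid g^n\cdot o)_o\bigr) - \delta.
\]
The key identity together with the stabilization of Step~1 (applied to $a_n, a_{n\pm1}, a_{2n} = 2\beta + O(\delta)$) evaluates these three Gromov products as $\beta + O(\delta)$, $T + a_1/2 + O(\delta)$, and $a_1/2 + O(\delta)$, respectively. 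Since $T \geq 0$, the right-hand minimum equals $a_1/2 + O(\delta)$, and rearranging yields the desired bound $a_1 \leq 2\beta + C'$.

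The hard part is Step~1: the stabilization of $(g^{-k}\cdot o \mid g^l\cdot o)_o$ must be obtained uniformly in $k$ and~$l$ (not merely along subsequences), and the iteration along powers of~$2$ then leverages the sublinear growth of~$a_m$ to pin down the stable value $2\beta + O(\delta)$. Once this is in hand, Step~2 reduces to a single application of the Gromov inequality at a well-chosen triple of points.
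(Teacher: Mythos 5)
Your proposal is correct, and it takes a genuinely different route from the paper. The paper's proof goes through Busemann functions: setting $\mathcal{B}_x(y,z) := (z|x)_y - (y|x)_z$, it cites the linear-growth estimate $|\mathcal{B}_{x_g^-}(o,g^{-n}\cdot o) - n\,c_g| \leq C''$ from [DSU, Prop.~4.2.16], deduces $c_g \leq |g|_{M,\infty}$ by dividing by~$n$, then bounds $|g|_M$ by combining the distance--Busemann relation with the Gromov inequality applied to $x_g^\pm$ and $g^{-1}\cdot o$. Your approach instead works directly with the orbit sequence $a_k = |g^k|_M - k|g|_{M,\infty} \geq 0$ and exploits the clean algebraic identity $(g^{-k}\cdot o \mid g^l\cdot o)_o = \frac{1}{2}(a_k + a_l - a_{k+l})$, which is indeed correct. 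The stabilization of these Gromov products for $k,l$ large (within $O(\delta)$ of $(x_g^-|x_g^+)_o$) follows from the Gromov inequality with a bridging point together with the standard fact that a boundary Gromov product differs from any particular $\liminf$ by $O(\delta)$, and the doubling iteration $a_{2n} = 2a_n - 2\beta + O(\delta)$ combined with $a_m/m \to 0$ does pin down $a_n = 2\beta + O(\delta)$ for $n\geq N$. Your Step~2 computation, evaluating the three Gromov products via the identity and invoking $T\geq 0$ to identify the minimum, is also correct. What the paper's route buys is brevity, by offloading the hard analytic work (uniform linear growth of the Busemann function along orbits) to a cited result; what your route buys is a self-contained argument that uses nothing beyond the Gromov inequality and Fekete subadditivity, with the dependence of all constants on $\delta$ visible throughout. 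The one place to be careful in writing up your Step~1 is that the uniform stabilization over \emph{all} $k,l\geq N$ (not just along subsequences) needs an explicit a priori upper bound on some $(g^{-k_0}\cdot o \mid g^{l_0}\cdot o)_o$ before the bridging trick closes; and the "$\delta$" in "varies by at most $\delta$" should be $2\delta$. These are minor and do not affect correctness.
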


\begin{proof}
The inequality $|g|_{M,\infty} \leq |g|_M$ is immediate, and true for any $g\in\Isom(M)$.
Let us prove the other inequality.

For $x\in M\cup\partial_{\infty}M$, consider the Busemann function $\mathcal{B}_x : (M\cup\partial_{\infty}M) \times (M\cup\nolinebreak\partial_{\infty}M) \to \R$ defined by
\begin{equation} \label{eqn:Busemann}
\mathcal{B}_x(y,z) := (z|x)_y - (y|x)_z.
\end{equation}
Then there exists $C''>0$ with the following properties (see \cite[Cor.\,3.4.12 \& Prop.\,4.2.16]{dsu17}):
\begin{enumerate}[(i)]
  \item\label{item:Busemann-dist} $|d_M(y,z) - \mathcal{B}_x(y,z) - 2 (y|x)_z| \leq C''$
  for all $y,z\in M$ and $x\in M\cup\partial_{\infty}M$,
  \item\label{item:Busemann-dyn-deriv} for any hyperbolic element $g\in\Isom(M)$, there exists $c_g>0$ such that\linebreak $|\mathcal{B}_{x_g^-}(o,g^{-n}\cdot o) - n\,c_g| \leq C''$ for all $n\in\NN$.
\end{enumerate}
Applying \eqref{item:Busemann-dyn-deriv} to $g^n$ and \eqref{item:Busemann-dist} to $(x,y,z) = (x_g^-,o,g^{-n}\cdot o)$ gives that for any hyperbolic $g\in\Isom(M)$ and any $n\in\NN$,
$$n c_g \leq \mathcal{B}_{x_g^-}(o,g^{-n}\cdot o) + C'' \leq d_M(o,g^{-n}\cdot o) + 2 C''.$$
Dividing by~$n$ and taking the limit inferior, we obtain $c_g \leq |g|_{M,\infty}$.
Applying \eqref{item:Busemann-dyn-deriv} to~$g$, we obtain $\mathcal{B}_{x_g^-}(o,g^{-1}\cdot o) \leq |g|_{M,\infty} + C''$.

On the other hand, applying \eqref{item:Busemann-dist} to $(x,y,z) = (x_g^{\pm},o,g^{-1}\cdot o)$ gives

$$\mathcal{B}_{x_g^-}(o|g^{-1}\cdot o) + 2 (o|x_g^-)_{g^{-1}\cdot o} - C'' \leq d_M(o,g^{-1}\cdot o) \leq \mathcal{B}_{x_g^+}(o|g^{-1}\cdot o) + 2 (o|x_g^+)_{g^{-1}\cdot o} + C''.$$

By \eqref{item:Busemann-dyn-deriv} we have $\mathcal{B}_{x_g^-}(o|g^{-1}\cdot o) \geq -C''$ and $\mathcal{B}_{x_g^+}(o|g^{-1}\cdot o) = \mathcal{B}_{x_g^+}(g\cdot o|o) = - \mathcal{B}_{x_g^+}(o|g\cdot o) \leq C''$ (where the first equality follows from the $\Isom(M)$-invariance of the Gromov product).
Therefore $(o|x_g^-)_{g^{-1}\cdot o} \leq (o|x_g^+)_{g^{-1}\cdot o} + 2 C''$, and so
$$(o|x_g^-)_{g^{-1}\cdot o} \leq \min\big((o|x_g^-)_{g^{-1}\cdot o},(o|x_g^+)_{g^{-1}\cdot o}\big) + 2 C'' = \min\big((g\cdot o|x_g^-)_o,(g\cdot o|x_g^+)_o\big) + 2 C''.$$
The Gromov inequality \eqref{eqn:Gromov-ineq} then yields $(o|x_g^-)_{g^{-1}\cdot o} \leq (x_g^-|x_g^+)_o + 2 C'' + \delta$.
Applying again \eqref{item:Busemann-dist} to $(x,y,z) = (o,g^{-1}\cdot o,x_g^-)$, we finally obtain
\begin{align*}
|g|_M = d_M(o,g^{-1}\cdot o) & \leq \mathcal{B}_{x_g^-}(o|g^{-1}\cdot o) + 2 (o|x_g^-)_{g^{-1}\cdot o} + C''\\
& \leq \mathcal{B}_{x_g^-}(o|g^{-1}\cdot o) + 2 (x_g^-|x_g^+)_o + 5C'' + 2\delta\\
& \leq |g|_{M,\infty} + 2 (x_g^-|x_g^+)_o + C'
\end{align*}
where $C' := 6C'' + 2\delta > 0$.
\end{proof}

\begin{cor} \label{cor:r-eps-prox-length}
Let $(M,d_M)$ be a Gromov hyperbolic metric space and $o\in M$ a basepoint.
We endow the Gromov boundary $\partial_{\infty}M$ of~$M$ with a Bourdon metric $d_{\partial_{\infty}M} = d_{a,o}$ as in Section~\ref{subsubsec:Bourdon-metric}.
Then for any $r\geq\varepsilon>0$, there exists $C>0$ such that for any $g\in\Isom(M)$, if $g$ is $(r,\varepsilon)$-proximal in $\partial_{\infty}M$, then
$$|g|_M - C \leq |g|_{M,\infty} \leq |g|_M. $$
\end{cor}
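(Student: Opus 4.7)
The plan is to deduce this corollary directly from Lemma~\ref{l.approachgap-hyp}, by converting the separation condition of $(r,\varepsilon)$-proximality into a uniform upper bound on the Gromov product $(x_g^-|x_g^+)_o$ of the two fixed points of~$g$. The upper inequality $|g|_{M,\infty} \leq |g|_M$ is immediate from the triangle inequality, so only the lower inequality requires work.

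First, I would observe that $(r,\varepsilon)$-proximality of~$g$ forces $g$ to be hyperbolic (Definition~\ref{def:r-eps-prox-hyp}), with attracting fixed point $x_g^+$ and repelling fixed point $X_g^- = x_g^-$; the two notations agree since a hyperbolic isometry has exactly two fixed points in $\partial_\infty M$, one attracting and one repelling. Applying Lemma~\ref{l.approachgap-hyp} then furnishes a constant $C'>0$ (depending only on $(M,d_M)$ and the basepoint~$o$, not on~$g$) such that
\[
|g|_M - 2(x_g^-|x_g^+)_o - C' \leq |g|_{M,\infty}.
\]

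Next, I would use condition~(1) of Definition~\ref{def:r-eps-prox-hyp}, namely $d_{\partial_\infty M}(x_g^+, x_g^-) \geq 2r$, together with the upper estimate in \eqref{eqn:Bourdon-metric-estim}: this gives $2r \leq a^{-(x_g^+|x_g^-)_o}$, whence
\[
(x_g^-|x_g^+)_o \leq \frac{\log\bigl(1/(2r)\bigr)}{\log a}.
\]
The right-hand side is a nonnegative constant depending only on $r$ and on the Bourdon parameter~$a$ (which is fixed along with the metric $d_{\partial_\infty M}$); here one implicitly uses that $2r \leq 1$, which holds because $d_{\partial_\infty M}$ is bounded above by~$1$.

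Substituting this bound into the inequality from Lemma~\ref{l.approachgap-hyp} yields the desired conclusion with
\[
C := C' + \frac{2\log\bigl(1/(2r)\bigr)}{\log a}.
\]
There is no genuine obstacle here: the argument is purely a combination of Lemma~\ref{l.approachgap-hyp} with the Bourdon metric estimate \eqref{eqn:Bourdon-metric-estim}, and parallels exactly the passage from Lemma~\ref{l.approachgap-proj} to Corollary~\ref{cor:r-eps-prox-lambda-mu} in the linear setting, with $2(x_g^-|x_g^+)_o$ playing the role of $|\log(d_{\PP(V)}(x_g^+,X_g^-)^2/2)|$.
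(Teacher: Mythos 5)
Your argument is correct and is precisely the intended deduction: the paper states this as a corollary of Lemma~\ref{l.approachgap-hyp} without written proof, and you supply the only missing step, namely bounding $(x_g^-|x_g^+)_o$ by a constant depending on $r$ via condition~(1) of Definition~\ref{def:r-eps-prox-hyp} and the Bourdon estimate~\eqref{eqn:Bourdon-metric-estim}. The final constant $C := C' + 2\log\bigl(1/(2r)\bigr)/\log a$ is correct, and your remark that $2r\leq 1$ (hence the logarithm is nonnegative) is exactly the point noted in the paper after Definition~\ref{def:r-eps-prox-hyp}.
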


\subsection{Proximal elements in semigroups with two global fixed points} 

The following lemma will be used in Sections \ref{sec:simult-AMS-simult-prox} and~\ref{sec:proof-exists-simult-prox} to treat the lineal case.

\begin{lema} \label{lem.linealcase}
Let $(M,d_M)$ be a Gromov hyperbolic metric space and $o\in M$ a basepoint.
We endow the Gromov boundary $\partial_{\infty}M$ with a Bourdon metric $d_{\partial_{\infty}M} = d_{a,o}$ as in Section~\ref{subsubsec:Bourdon-metric}.
Let $\Gamma$ be a subsemigroup of $\Isom(M)$ with two global fixed points $x_0,x_1$ in $\partial_{\infty}M$.
Then for any $r\geq\eps>0$ with $4r\leq d_{\partial_{\infty}M}(x_0,x_1)$, there exists $C>0$ such that any element $\gamma \in \Gamma$ satisfying $d_M(o,\gamma\cdot o) > C$ is $(r,\eps)$-proximal in $\partial_{\infty}M$.
\end{lema}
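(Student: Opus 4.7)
My plan is to combine Proposition~\ref{prop:Lip-hyp} with Lemma~\ref{l.critprox-hyp}, using crucially that every $\gamma \in \Gamma$ fixes both $x_0$ and $x_1$. First I would fix an auxiliary parameter $\eps_0 := \eps/4$ and apply Proposition~\ref{prop:Lip-hyp} with this parameter, obtaining for each $g \in \Isom(M)$ points $y_g^+, Y_g^- \in \partial_{\infty}M$ and a uniform constant $D \geq 1$ such that $g|_{B_{Y_g^-}^{\eps_0}}$ is $D a^{-d_M(o,g\cdot o)}$-Lipschitz with image in a ball of radius $\leq D a^{-d_M(o,g\cdot o)}$ centered at $y_g^+$ for $d_{\partial_{\infty}M}$. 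I would then take $C$ large enough that $D a^{-C} \leq \eps/4$.

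The key step is to locate $x_0$ and $x_1$ relative to $y_\gamma^+, Y_\gamma^-$ for $\gamma$ with $d_M(o,\gamma\cdot o) > C$. Since $2\eps_0 = \eps/2 < 4r \leq d_{\partial_{\infty}M}(x_0,x_1)$, the triangle inequality rules out that both $x_0$ and $x_1$ lie within $\eps_0$ of $Y_\gamma^-$, so (after possibly swapping the roles of $x_0$ and $x_1$) we may assume $x_0 \in B_{Y_\gamma^-}^{\eps_0}$. Because $\gamma \cdot x_0 = x_0$, the image bound of Proposition~\ref{prop:Lip-hyp} forces $d_{\partial_{\infty}M}(x_0, y_\gamma^+) \leq D a^{-d_M(o,\gamma\cdot o)} \leq \eps/4$. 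If $x_1$ also belonged to $B_{Y_\gamma^-}^{\eps_0}$, the same reasoning applied to $x_1$ would give $d_{\partial_{\infty}M}(x_0, x_1) \leq \eps/2$, contradicting $d_{\partial_{\infty}M}(x_0, x_1) \geq 4r$; hence $d_{\partial_{\infty}M}(x_1, Y_\gamma^-) < \eps/4$.

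I would then invoke Lemma~\ref{l.critprox-hyp} with $w^+ := x_0$, $w^- := x_1$, and parameters $r/2 \geq \eps/2$, whose conclusion is precisely the $(r,\eps)$-proximality of $\gamma$. Hypothesis~(1), $d_{\partial_{\infty}M}(x_0,x_1) \geq 3r$, is immediate from $d_{\partial_{\infty}M}(x_0,x_1) \geq 4r$. For hypotheses~(2) and~(3), a triangle-inequality argument using $d_{\partial_{\infty}M}(x_1, Y_\gamma^-) < \eps/4 = \eps_0$ gives the inclusion $B_{x_1}^{\eps/2} \subset B_{Y_\gamma^-}^{\eps_0}$, and on this smaller set the Lipschitz and image estimates from Proposition~\ref{prop:Lip-hyp} transfer: $\gamma|_{B_{x_1}^{\eps/2}}$ is $(\eps/2)$-Lipschitz and lands in $b_{y_\gamma^+}^{\eps/4} \subset b_{x_0}^{\eps/2}$ (the last inclusion using $d_{\partial_{\infty}M}(x_0, y_\gamma^+) \leq \eps/4$).

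The only real difficulty is the careful bookkeeping of the auxiliary parameters: Proposition~\ref{prop:Lip-hyp} only controls $\gamma$ on $B_{Y_\gamma^-}^{\eps_0}$, not on balls centered at the actual fixed points, so $\eps_0$ must be taken small enough relative to $\eps$ that $B_{x_1}^{\eps/2}$ sits inside $B_{Y_\gamma^-}^{\eps_0}$, while $C$ must be large enough that $y_\gamma^+$ is genuinely close to $x_0$ and that the various images fit inside the balls demanded by the criterion. The choice $\eps_0 = \eps/4$ together with $D a^{-C} \leq \eps/4$ makes all these constraints simultaneously compatible.
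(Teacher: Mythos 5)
Your proof is correct and takes essentially the same approach as the paper's: both apply Proposition~\ref{prop:Lip-hyp} to obtain $y_\gamma^+, Y_\gamma^-$, use the $\Gamma$-fixed points $x_0,x_1$ to locate these, and conclude via Lemma~\ref{l.critprox-hyp} with parameters $(r/2,\eps/2)$. The only cosmetic differences are that the paper applies Lemma~\ref{l.critprox-hyp} directly with $(w^+,w^-)=(y_{\gamma}^+,Y_{\gamma}^-)$ using an auxiliary parameter $\eps/2$ and argues by contradiction with sequences, whereas you work with $(w^+,w^-)=(x_0,x_1)$ using the slightly smaller $\eps_0=\eps/4$ to leave room for the transfer inclusion $B_{x_1}^{\eps/2}\subset B_{Y_\gamma^-}^{\eps_0}$ and make the constant $C$ explicit.
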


\begin{proof}
Fix $r\geq\eps>0$ with $4r\leq d_{\partial_{\infty}M}(x_0,x_1)$.
It is enough to check that for any sequence $(\gamma_n)\in\Gamma^{\NN}$ with $d_M(o,\gamma_n\cdot o) \to +\infty$, the elements $\gamma_n$ are $(r,\eps)$-proximal in $\partial_{\infty}M$ for all large enough~$n$.
Fix such a sequence $(\gamma_n)_{n\in\NN}$.

By Proposition~\ref{prop:Lip-hyp} with $\eps/2$ instead of~$\eps$, we can associate to each~$\gamma_n$ two points $y_{\gamma_n}^+,Y_{\gamma_n}^- \in \partial_{\infty}M$ such that for all large enough~$n$, we have $\gamma_n \cdot B_{Y_{\gamma_n}^-}^{\eps/2} \subset b_{y_{\gamma_n}^+}^{\eps/2}$ and the restriction of $\gamma_n$ to $B_{Y_{\gamma_n}^-}^{\eps/2}$ is $(\eps/2)$-Lipschitz.
Since $\eps\leq d_{\partial_{\infty}M}(x_0,x_1)$, for each $n\in\NN$ there exists $i\in\{0,1\}$ such that $d_{\partial_{\infty}M}(x_i,Y_{\gamma_n}^-) \geq \eps/2$, \ie $x_i \in B_{Y_{\gamma_n}^-}^{\eps/2}$.
We then have $x_i = \gamma_n\cdot x_i \in b_{y_{\gamma_n}^+}^{\eps/2}$.
Note that we cannot have $d_{\partial_{\infty}M}(x_{1-i},Y_{\gamma_n}^-) \geq \eps/2$, otherwise we would similarly have $x_{1-i} =\linebreak \gamma_n\cdot x_{1-i} \in b_{y_{\gamma_n}^+}^{\eps/2}$: contradiction since $d_{\partial_{\infty}M}(x_0,x_1) > \eps$.
Therefore $d_{\partial_{\infty}M}(x_{1-i},Y_{\gamma_n}^-) < \eps/2$, and $d_{\partial_{\infty}M}(y_{\gamma_n}^+,Y_{\gamma_n}^-) \geq d_{\partial_{\infty}M}(x_0,x_1) - \eps \geq 3r$.
Lemma~\ref{l.critprox-hyp} with $(r/2,\eps/2)$ instead of $(r,\eps)$ then implies that $\gamma_n$ is $(r,\eps)$-proximal in $\partial_{\infty}M$ for all large enough~$n$.
\end{proof}

\section{A condition allowing to move points at infinity} \label{s.movingpoints}

From now on we work in the following setting.

\begin{setting} \label{setting}
Let $\Gamma$ be a semigroup and let $I_1, I_2$ be finite sets.
For each $i\in I_1$, let $V_i$ be a Euclidean space of dimension $\geq 2$ and $\rho_i : \Gamma\to\GL(V_i)$ a representation such that $\rho_i(\Gamma)$ acts strongly irreducibly on~$V_i$.
For each $i\in I_2$, let $(M_i,d_{M_i})$ be a Gromov hyperbolic metric space, $o_i\in M_i$ a basepoint, and $\rho_i : \Gamma\to\Isom(M_i)$ a representation such that $\rho_i(\Gamma)$ acts on $\partial_\infty M_i$ without a unique global fixed point; we endow the Gromov boundary $\partial_\infty M_i$ with a Bourdon metric $d_i = d_{a_i,o_i}$ as in Section~\ref{subsubsec:Bourdon-metric}.
We denote by $I'_2$ the set of indices $i\in I_2$ such that $\rho_i(\Gamma)$ is of general type.
\end{setting}

Note that for each $i\in I_2\smallsetminus I'_2$ the semigroup $\rho_i(\Gamma)$ is lineal (see Section~\ref{subsubsec:isom-hyp}).

We consider the following condition, where for $i\in I_2$ we say that two points of $\partial_{\infty}M_i$ are transverse if they are distinct:
\begin{itemize}
  \item[(*)] for any finite sets $\cZ_i^+$, $i\in I_1\cup I'_2$, of points of $\PP(V_i)$ or $\partial_{\infty}M_i$, and any finite sets $\cZ_i^-$, $i\in I_1\cup I'_2$, of points of $\PP(V_i^*)$ or $\partial_{\infty}M_i$, there exists $\gamma \in \Gamma$ such that $\rho_i(\gamma)\cdot z_i^+$ is transverse to~$z_i^-$ and $z_i^+$ is transverse to $\rho_i(\gamma)\cdot z_i^-$ for all $z_i^+\in\cZ_i^+$ and all $z_i^-\in\cZ_i^-$, simultaneously for all $i\in I_1\cup I'_2$.
\end{itemize}

In this section we discuss some consequences of condition~(*) (namely Lemma~\ref{l.finiteset} and Corollary~\ref{cor:finiteset}), and then use them to prove that the condition actually always holds in the setting~\ref{setting} (Proposition~\ref{prop:move-away}).

\subsection{Creating elements in general position}

The following result should be compared with \cite[Lem.\,4.4]{ams95}.

\begin{lema} \label{l.finiteset}
In the setting~\ref{setting}, suppose that condition (*) holds.
For each $i\in I_1$, let $\cX_i^+$ (\resp $\cX_i^-$) be a finite subset of $\PP(V_i)$ (\resp $\PP(V_i^*)$).
For each $i\in I'_2$, let $\cX_i^+,\cX_i^-$ be finite subsets of $\partial_{\infty}M_i$.
Suppose that for any $i\in I_1\cup I'_2$,
\begin{itemize}
  \item the family $\cX_i^+$ is in general position,
  \item the family $\cX_i^-$ is in general position,
  \item any element of $\cX_i^+$ is transverse to any element of $\cX_i^-$.
\end{itemize}
Then there is an infinite family $\mathcal{F}_{\infty}$ of elements of~$\Gamma$ such that for any $i\in I_1\cup I'_2$ and any $\varepsilon\in\{1,-1\}$, 
\begin{itemize}
  \item the family $(\rho_i(\beta)^{\varepsilon}\cdot x_i^+)_{\beta\in\mathcal{F}_{\infty},\ x_i^+\in\cX_i^+}$ is in general position,
  \item the family $(\rho_i(\beta)^{\varepsilon}\cdot x_i^-)_{\beta\in\mathcal{F}_{\infty},\ x_i^-\in\cX_i^-}$ is in general position,
  \item $\rho_i(\beta)^{\varepsilon}\cdot x_i^+$ is transverse to $\rho_i(\beta')^{\varepsilon}\cdot x_i^-$ for all $\beta,\beta'\in\mathcal{F}_{\infty}$, all $x_i^+\in\cX_i^+$, and all $x_i^-\in\cX_i^-$.
\end{itemize}
\end{lema}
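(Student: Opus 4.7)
I would build $\mathcal{F}_\infty$ recursively, as an increasing union $\mathcal{F}_\infty = \bigcup_n \mathcal{F}_n$ of finite subsets of~$\Gamma$, each $\mathcal{F}_n$ satisfying the three listed conclusions (with $\mathcal{F}_n$ replacing $\mathcal{F}_\infty$). The base case $\mathcal{F}_1 = \{\beta_1\}$ for any $\beta_1\in\Gamma$ is immediate from the hypotheses on $\cX_i^\pm$, since $\rho_i(\beta_1)^\varepsilon$ is a bijection of the ambient space preserving general position and transversality.

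For the inductive step, given $\mathcal{F}_n$, set $\cY_i^{\pm,\varepsilon} := \bigcup_{\beta \in \mathcal{F}_n} \rho_i(\beta)^\varepsilon \cdot \cX_i^\pm$. I would apply condition~(*) with $\cZ_i^+$ containing $\cX_i^+$, all points of the $\cY_i^{+,\varepsilon}$, and (in the linear factors) the intersection points of $(\dim V_i - 1)$-tuples of hyperplanes from $\cX_i^- \cup \cY_i^{-,\varepsilon}$; and with $\cZ_i^-$ containing $\cX_i^-$, all points of the $\cY_i^{-,\varepsilon}$, and the projective hyperplanes spanned by $(\dim V_i - 1)$-subsets of $\cX_i^+ \cup \cY_i^{+,\varepsilon}$. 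Condition~(*) then supplies a single $\gamma \in \Gamma$ realising every prescribed transversality relation in all factors $i \in I_1 \cup I'_2$ simultaneously; set $\beta_{n+1} := \gamma$. The two signs of~$\varepsilon$ are handled by a symmetric second application of~(*) (or via Remark~\ref{remark-sideofs}). To force $\mathcal{F}_\infty$ to be infinite, I would further require each $\rho_i(\beta_{n+1})^\varepsilon \cdot x^+$ to differ from every element of $\cY_i^{+,\varepsilon}$, a finite set of distinctness constraints absorbable into~(*).

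The verification that $\cY_i^{+,\varepsilon} \cup \rho_i(\beta_{n+1})^\varepsilon \cdot \cX_i^+$ is in general position uses the combinatorial fact that a $\dim V_i$-tuple in $\PP(V_i)$ is linearly dependent iff some element lies on the hyperplane spanned by the other $\dim V_i - 1$ (for $i \in I'_2$, general position in $\partial_{\infty}M_i$ is just pairwise distinctness). Splitting a hypothetical dependence by the number of new (i.e.\ $\rho_i(\beta_{n+1})^\varepsilon$-image) vs.\ old elements: the all-old case is the inductive hypothesis; the all-new case is invariance of general position of $\cX_i^+$ under $\GL(V_i)$; the case of one new among $\dim V_i - 1$ old is exactly the condition that the new image avoids each hyperplane spanned by a $(\dim V_i - 1)$-subset of $\cY_i^{+,\varepsilon}$, encoded in $\cZ_i^-$; the dual case of one old among $\dim V_i - 1$ new is the condition that the old point avoids $\rho_i(\beta_{n+1})^\varepsilon \cdot H$ for each hyperplane $H$ spanned by a $(\dim V_i - 1)$-subset of $\cX_i^+$, which is the ``$z^+$ transverse to $\rho_i(\gamma)\cdot z^-$'' half of~(*). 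The cross-transversality of item~(3), and the general position of the $\cY_i^{-,\varepsilon}$-family, are handled symmetrically in the same invocation.

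The hard part is the \emph{interior} mixed case, where the hypothetical dependent tuple has at least two new and at least two old elements: the spanning hyperplane is then genuinely mixed, depends on the unknown $\beta_{n+1}$, and cannot be pre-specified in $\cZ_i^-$. I would resolve this by an inner induction on the number of new elements, within a single outer step, iterating~(*) with growing $\cZ_i^-$: after an initial application of~(*) controls the pure and one-sided cases, the images $\rho_i(\beta_{n+1})^\varepsilon \cdot x$ become fixed points of $\PP(V_i)$, so hyperplanes spanned by one such image together with $(\dim V_i - 2)$ old elements are now concrete and can be appended to $\cZ_i^-$ in a second application producing a refined $\beta_{n+1}$; iterating $\dim V_i - 2$ rounds and retaining only the final $\gamma$ rules out every mixed dependence. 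This nested use of~(*) within one step of the outer induction is the technical heart of the proof.
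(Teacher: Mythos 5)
Your overall strategy---inductively building $\mathcal{F}_\infty$ one element at a time by applying condition~(*) with ever larger sets $\cZ_i^\pm$---is the same as the paper's. You also correctly realise that for the sign $\varepsilon=-1$ one must feed $\bigcup_k\rho_i(\beta_k)^{-1}\cdot\cX_i^\pm$ into $\cZ_i^\pm$ alongside the $\varepsilon=+1$ data; a ``second application of (*)'' would produce a different element $\gamma$ and so cannot be combined with the first, but this is resolved by the two-sidedness already built into a single application of~(*).

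The key point you raise---that a dependent tuple containing at least two new points $\rho_i(\beta_{n})^\varepsilon\cdot x$ and at least two old points cannot be forbidden by (*) applied to pre-specified hyperplanes---is well observed, and I want to flag that the paper's proof is subject to the same objection. The paper arranges for $\rho_i(\beta_n)^\varepsilon\cdot\cX_i^+$ to avoid every proper projective subspace spanned by old points (via the hyperplanes $\cW_i^-$); this rules out dependences involving exactly one new point, but not mixed ones (the standard counterexample: in $\PP(\RR^4)$ take old $=\{e_1,e_2\}$, new $=\{e_3,e_1+e_2+e_3\}$; each new point avoids the line through $e_1,e_2$, yet the four are coplanar). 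Condition~(*) as stated controls the image of a finite set of \emph{points} against a finite set of \emph{hyperplanes}, not the image of a higher-dimensional span against another span, and the mixed span genuinely depends on the still-unknown $\beta_n$. The paper gets away with this because it only ever invokes Corollary~\ref{cor:finiteset} with $\#\cX_i^+=\#\cX_i^-=1$ for $i\in I_1$ (in Theorem~\ref{teo.generalresult} and Proposition~\ref{prop:simult-prox-proper}), so each step introduces a single new point and the mixed case cannot arise; in Proposition~\ref{prop:move-away} it takes $I_1=\emptyset$, and for $i\in I'_2$ general position is just pairwise distinctness, for which the paper's conditions do suffice.

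Your proposed ``inner induction'' fix, however, does not work. After the first application of~(*) yields some $\gamma_1$, the hyperplanes spanned by $\rho_i(\gamma_1)^\varepsilon\cdot x$ and old points become concrete, and you append them to $\cZ_i^-$ and reapply~(*). But the second application produces a \emph{different} element $\gamma_2$: the constraint ``$\rho_i(\gamma_2)^\varepsilon\cdot x$ avoids the hyperplane through $\rho_i(\gamma_1)^\varepsilon\cdot x'$ and old points'' is irrelevant to the dependence you want to kill, which involves $\rho_i(\gamma_2)^\varepsilon\cdot x'$, not $\rho_i(\gamma_1)^\varepsilon\cdot x'$. Iterating does not converge: each round throws away the element whose images defined the previous round's hyperplanes. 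To actually prove the lemma with $\#\cX_i^\pm\geq 2$ one would need to strengthen the avoidance statement from ``point avoids hyperplane'' to ``a nonempty Zariski-open condition on $\rho_i(\gamma)$ in the Zariski closure $G_i$ holds''---which is essentially what the proof of Proposition~\ref{prop:move-away} establishes in the linear factors, and would require checking that the relevant general-position condition defines a nonempty open subset of $G_i$. Alternatively, and more in keeping with how the lemma is actually used, one can simply restrict to singleton $\cX_i^\pm$ for $i\in I_1$, in which case your ``one new, several old'' case is the only nontrivial one and your argument goes through.
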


Here, for $i\in I_1$, we say that a family of points in $\PP(V_i)$ (\resp $\PP(V_i^*)$) is \emph{in general position} if any subset of $j$ points with $1\leq j \leq \dim(V_i)$ spans a $(j-1)$-dimensional projective subspace of $\PP(V_i)$ (\resp $\PP(V_i^*)$).
For $i\in I'_2$, we say that a family of points of $\partial_{\infty}M_i$ is \emph{in general position} if the points are pairwise distinct.

\begin{proof}[Proof of Lemma~\ref{l.finiteset}]
We construct the family $\mathcal{F}_{\infty} = \{ \beta_n \,|\, n\in\NN^*\}$ inductively.
We start by choosing any $\beta_1 \in \Gamma$. 
For $n\geq 2$, suppose that we have found $\beta_1, \ldots, \beta_{n-1} \in \Gamma$ such that for any $i\in I_1\cup I'_2$ and any $\varepsilon\in\{1,-1\}$, the family $(\rho_i(\beta_k)^{\varepsilon}\cdot x_i^+)_{1\leq k\leq n-1,\ x_i^+\in\cX_i^+}$ is in general position, the family $(\rho_i(\beta_k)^{\varepsilon}\cdot x_i^-)_{1\leq k\leq n-1,\ x_i^-\in\cX_i^-}$ is in general position, and $\rho_i(\beta_k)^{\varepsilon}\cdot x_i^+$ is transverse to $\rho_i(\beta_{\ell})^{\varepsilon}\cdot x_i^-$ for all $1\leq k,\ell\leq n-1$, all $x_i^+\in\cX_i^+$, and all $x_i^-\in\cX_i^-$.
Let us construct the next element~$\beta_n$.

For each $i\in I_1$, we choose a finite subset $\cW_i^-$of $\PP(V_i^*)$ (\resp $\cW_i^+$ of $\PP(V_i)$) corresponding to projective hyperplanes of $\PP(V_i)$ (\resp $\PP(V_i^*)$) containing all possible proper projective subspaces of $\PP(V_i)$ (\resp $\PP(V_i^*)$) generated by elements of $\bigcup_{k=1}^{n-1} \rho_i(\beta_k)\cdot\cX_i^+$ (\resp $\bigcup_{k=1}^{n-1} \rho_i(\beta_k)\cdot\cX_i^-$) and all possible proper projective subspaces of $\PP(V_i)$ (\resp $\PP(V_i^*)$) generated by elements of $\bigcup_{k=1}^{n-1} \rho_i(\beta_k)^{-1}\cdot\cX_i^+$ (\resp $\bigcup_{k=1}^{n-1} \rho_i(\beta_k)^{-1}\cdot\cX_i^-$).
For each $i\in I'_2$, we set
$$\cW_i^- := \bigcup_{k=1}^{n-1} \big(\rho_i(\beta_k)\cdot\cX_i^+ \cup \rho_i(\beta_k)^{-1}\cdot\cX_i^+\big) \quad\mathrm{and}\quad \cW_i^+ := \bigcup_{k=1}^{n-1} \big(\rho_i(\beta_k)\cdot\cX_i^- \cup \rho_i(\beta_k)^{\varepsilon}\cdot\cX_i^-\big).$$

By condition (*) with $\cZ_i^+ = \cX_i^+ \cup \bigcup_{k=1}^{n-1} \rho_i(\beta_k)\cdot\cX_i^+\cup \bigcup_{k=1}^{n-1} \rho_i(\beta_k)^{-1}\cdot\cX_i^+ \cup \cW_i^+$ and $\cZ_i^- = \cX_i^- \cup \bigcup_{k=1}^{n-1} \rho_i(\beta_k)\cdot\cX_i^- \cup \bigcup_{k=1}^{n-1} \rho_i(\beta_k)^{-1}\cdot\cX_i^- \cup \cW_i^-$, we can find an element $\beta_n \in \Gamma$ such that
\begin{itemize}
  \item for each $i\in I_1$ and each $\varepsilon\in\{1,-1\}$, the set $\rho_i(\beta_n)^{\varepsilon}\cdot\cX_i^+$ does not meet any proper projective subspace of $\PP(V_i)$ generated by elements of $\bigcup_{k=1}^{n-1} \rho_i(\beta_k)^{\varepsilon}\cdot\cX_i^+$, and the set $\rho_i(\beta_n)^{\varepsilon}\cdot\cX_i^-$ does not meet any proper projective subspace of $\PP(V_i^*)$ generated by elements of $\bigcup_{k=1}^{n-1} \rho_i(\beta_k)^{\varepsilon}\cdot\cX_i^-$,
  \item for each $i\in I'_2$ and each $\varepsilon\in\{1,-1\}$, we have $\rho_i(\beta_n)^{\varepsilon}\cdot\cX_i^+ \cap \bigcup_{k=1}^{n-1} \rho_i(\beta_k)^{\varepsilon}\cdot\cX_i^+ = \rho_i(\beta_n)^{\varepsilon}\cdot\cX_i^- \cap \bigcup_{k=1}^{n-1} \rho_i(\beta_k)^{\varepsilon}\cdot\nolinebreak\cX_i^- =\nolinebreak\emptyset$,
  \item $\rho_i(\beta_n)^{\varepsilon}\cdot x_i^+$ is transverse to $\rho_i(\beta_k)^{\varepsilon}\cdot x_i^-$ and $\rho_i(\beta_k)^{\varepsilon}\cdot x_i^+$ is transverse to $\rho_i(\beta_n)^{\varepsilon}\cdot x_i^-$ for all $\varepsilon\in\{1,-1\}$, all $1\leq k\leq n-1$, all $x_i^+\in\cX_i^+$, and all $x_i^-\in\cX_i^-$.
\end{itemize}
Then for any $i\in I_1\cup I'_2$ and any $\varepsilon\in\{1,-1\}$, the family $(\rho_i(\beta_k)^{\varepsilon}\cdot x_i^+)_{1\leq k\leq n,\ x_i^+\in\cX_i^+}$ is in general position, the family $(\rho_i(\beta_k)^{\varepsilon}\cdot x_i^-)_{1\leq k\leq n,\ x_i^-\in\cX_i^-}$ is in general position, and $\rho_i(\beta_k)^{\varepsilon}\cdot x_i^+$ is transverse to $\rho_i(\beta_{\ell})^{\varepsilon}\cdot x_i^-$ for all $1\leq k,\ell\leq n$, all $x_i^+\in\cX_i^+$, and all $x_i^-\in\cX_i^-$.
\end{proof}

\begin{cor} \label{cor:finiteset}
In the setting~\ref{setting}, suppose that condition (*) holds.
For each $i\in I_1$, let $\cX_i^+$ (\resp $\cX_i^-$) be a finite subset of $\PP(V_i)$ (\resp $\PP(V_i^*)$); we endow the projective space $\PP(V_i)$ with the distance function $d_i = d_{\PP(V_i)}$ from Section~\ref{sec:proj}.
For each $i\in I'_2$, let $\cX_i^+,\cX_i^-$ be finite subsets of $\partial_{\infty}M_i$.
Suppose that for any $i\in I_1\cup I'_2$, each of the two families $\cX_i^+$ and $\cX_i^-$ is in general position.
Then for any $N'\in\NN^*$, there exist $0<r_0<1$ and a finite subset $\mathcal{F}$ of~$\Gamma$ with the following property: for any set $\cY_i^-$ of $N'$ projective hyperplanes of $\PP(V_i)$, $i\in I_1$, and any set $\cY_i^-$ of $N'$ points of $\partial_\infty M_i$, $i\in I'_2$, and for any set $\cY_i^+$ of $N'$ points of $\PP(V_i)$, $i\in I_1$, and any set $\cY_i^+$ of $N'$ points of $\partial_\infty M_i$, $i\in I'_2$, there exists $\beta \in \mathcal{F}$ such that
$$d_i(\rho_i(\beta)\cdot x_i^+,y_i^-) \geq r_0 \quad\quad\mathrm{and}\quad\quad d_i(y_i^+,\rho_i(\beta)^{-1}\cdot x_i^-) \geq r_0$$
for all $(x_i^+,y_i^-) \in \cX_i^+\times\cY_i^-$ and all $(y_i^+,x_i^-) \in \cY_i^+\times\cX_i^-$, simultaneously for all $i\in I_1\cup I'_2$.
\end{cor}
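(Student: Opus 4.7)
The plan is to extract $\mathcal{F}$ as a suitably large finite subset of the family produced by Lemma~\ref{l.finiteset}, and then to choose $r_0$ uniformly in the targets via a compactness argument on the dual projective space.

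First, I would apply Lemma~\ref{l.finiteset} (after first invoking condition~(*) once, if needed, or equivalently by running the same inductive construction with the transversality clause dropped, so that the general position hypothesis alone suffices) to obtain an infinite family $\mathcal{F}_\infty\subset\Gamma$ such that for every $i\in I_1\cup I'_2$ and every $\varepsilon\in\{1,-1\}$, both $(\rho_i(\beta)^{\varepsilon}\cdot x_i^+)_{\beta,\,x_i^+}$ and $(\rho_i(\beta)^{\varepsilon}\cdot x_i^-)_{\beta,\,x_i^-}$ are in general position.

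The key intermediate step will be a quantitative general-position lemma: if $P$ is a finite subset of $\PP(W)$ in general position (with $\dim W=\mathtt{d}\geq 2$), then there exists $r_0>0$ such that every projective hyperplane $H\subset\PP(W)$ contains at most $\mathtt{d}-1$ points of~$P$ within distance~$r_0$. I would prove it by contradiction and compactness: otherwise one would find hyperplanes $H_n$ with at least $\mathtt{d}$ points of $P$ within $1/n$ of $H_n$; pigeonhole applied to the finite set~$P$ isolates a fixed $\mathtt{d}$-subset $P_0\subset P$ within $1/n$ of $H_n$ for infinitely many~$n$, and compactness of the dual projective space $\PP(W^*)$ then yields a limit hyperplane $H_\infty$ containing all of~$P_0$, contradicting the general position of~$P$. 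The counterpart for $i\in I'_2$ is immediate: for pairwise distinct points $P\subset\partial_\infty M_i$, any $r_0<\tfrac{1}{2}\min_{p\neq p'\in P}d_i(p,p')$ ensures that every $y\in\partial_\infty M_i$ is within~$r_0$ of at most one point of~$P$.

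To conclude, I would set $B:=2N'\bigl(\sum_{i\in I_1}(\dim V_i-1)+|I'_2|\bigr)$ and choose any finite $\mathcal{F}\subset\mathcal{F}_\infty$ with $|\mathcal{F}|>B$. Applying the quantitative lemma to each of the finite sets $\{\rho_i(\beta)\cdot x_i^+:\beta\in\mathcal{F},\,x_i^+\in\cX_i^+\}$ in $\PP(V_i)$ or $\partial_\infty M_i$, and to each of $\{\rho_i(\beta)^{-1}\cdot x_i^-:\beta\in\mathcal{F},\,x_i^-\in\cX_i^-\}$ in $\PP(V_i^*)$ or $\partial_\infty M_i$, produces a common parameter which, shrunk if necessary, we take as $r_0\in(0,1)$. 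For any target families $(\cY_i^{\pm})$, a direct count then shows that for each $i\in I_1$ at most $N'(\dim V_i-1)$ elements $\beta\in\mathcal{F}$ can be ``bad'' for some $y_i^-\in\cY_i^-$ (respectively at most $N'$ such $\beta$ for $i\in I'_2$), and symmetrically on the $(\cY_i^+,\cX_i^-)$ side; summing the four sources over~$i$, the total number of bad~$\beta\in\mathcal{F}$ is at most $B<|\mathcal{F}|$, so at least one good $\beta\in\mathcal{F}$ exists.

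The main obstacle will be the quantitative general-position lemma itself: even a large finite family in general position could a priori have many points clustering in a tubular neighborhood of a single hyperplane, and it is precisely the compactness of $\PP(W^*)$, combined with the discrete general-position hypothesis, that forbids this clustering and delivers a $r_0$ that is uniform in the hyperplane.
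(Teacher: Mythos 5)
Your proposal is correct and follows essentially the same approach as the paper's proof: extract a sufficiently large finite $\mathcal{F}$ from the infinite family of Lemma~\ref{l.finiteset}, use a compactness argument to get a uniform gap $r_0$ from the general position property, and finish by pigeonhole. The one substantive difference is that your quantitative general-position lemma is phrased globally, per hyperplane: any fixed hyperplane can be $r_0$-close to at most $\dim V_i - 1$ points of the entire set $\{\rho_i(\beta)\cdot x_i^+ : \beta\in\mathcal{F},\ x_i^+\in\cX_i^+\}$. The paper's corresponding claim is phrased per pair $(x_i^+,y)$ and therefore carries an extra factor of $\#\cX_i^\pm$ in the cardinality bound for $\mathcal{F}$; your $B = 2N'\bigl(\sum_{i\in I_1}(\dim V_i-1)+|I'_2|\bigr)$ is sharper, but both bounds are correct and the pigeonhole argument works either way. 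You are also right to flag that Lemma~\ref{l.finiteset} formally assumes mutual transversality of $\cX_i^+$ and $\cX_i^-$ while the corollary does not; as you note, one only needs the two general-position bullet points of the conclusion, and those are obtained by the same inductive construction with the mutual-transversality clause deleted — a small gap the paper itself passes over silently.
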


Here we see each $x_i^- \in \cX_i^- \subset \PP(V_i^*)$ as a projective hyperplane of $\PP(V_i)$.

\begin{proof}
Fix $N'\in\NN^*$, and let us find $r_0\in (0,1)$ and $\mathcal{F}\subset\Gamma$ with the required property.
Let $\mathcal{F}_{\infty}$ be the infinite subset of~$\Gamma$ given by Lemma~\ref{l.finiteset}.
Let $\mathcal{F}$ be any finite subset of~$\mathcal{F}_{\infty}$ which is large enough in the sense that

\begin{equation} \label{eqn:card-F}
\# \mathcal{F} > 2N' \sum_{i\in I_1} (\dim(V_i)-1)\,(\#\cX_i^+ + \#\cX_i^-) + 2N' \sum_{i\in I'_2} (\#\cX_i^+ + \#\cX_i^-) .
\end{equation}
By construction, the families $(\rho_i(\beta)^{\varepsilon}\cdot x_i^+)_{\beta\in\mathcal{F},\ x_i^+\in\cX_i^+}$ and $(\rho_i(\beta)^{\varepsilon}\cdot x_i^-)_{\beta\in\mathcal{F},\ x_i^-\in\cX_i^-}$ are in general position for all $i\in I_1\cup I'_2$ and all $\varepsilon\in\{1,-1\}$.

Fix $i\in I_1$ and let $n_i := \dim(V_i)\geq 2$.
We claim that there exists $r_i>0$ such that for any projective hyperplane $y$ of $\PP(V_i)$ and any $x_i^+ \in \cX_i^+$, there are at most $n_i-1$ elements $\beta \in \mathcal{F}$ satisfying  $d_i(\rho_i(\beta)\cdot x_i^+,y)\leq r_i$. 
Indeed, if not, then for any $k\in\NN^*$ there exist a projective hyperplane $y_k$ of $\PP(V_i)$, an element $x_i^+ \in \cX_i^+$, and $n_i$ distinct elements $\beta_1,\dots,\beta_{n_i} \in \mathcal{F}$ such that $d_i(\rho_i(\beta_j)\cdot x_i^+,y_k)\leq 1/k$ for all $1\leq j\leq n_i$.
After passing to a subsequence and taking a limit, we obtain a projective hyperplane $y$ of $\PP(V_i)$, an element $x_i^+ \in \cX_i^+$, and $n_i$ distinct elements $\beta_1,\dots,\beta_{n_i} \in \mathcal{F}$ such that the points $\rho_i(\beta_j)\cdot x_i^+$ all belong to~$y$: impossible since the family $(\rho_i(\beta)\cdot x_i^+)_{\beta\in\mathcal{F},\ x_i^+\in\cX_i^+}$ is in general position.

Similarly, for each $i\in I_1$ there exists $r'_i>0$ such that for any point $y$ of $\PP(V_i)$ and any $x_i^- \in \cX_i^-$, there are at most $n_i-1$ elements $\beta \in \mathcal{F}$ such that $d_i(y,\rho_i(\beta)^{-1}\cdot x_i^-)\leq r'_i$.

Fix $i\in I'_2$.
Since the family $(\rho_i(\beta)\cdot x_i^+)_{\beta\in\mathcal{F},\ x_i^+\in\cX_i^+}$ is in general position, the minimal distance between two points in this family is positive.
Let $r_i>0$ be smaller than half this distance.
By the triangle inequality, for any $x_i^+ \in \cX_i^+$ and any $y\in\partial_{\infty}M_i$, there is at most one element $\beta \in \mathcal{F}$ such that $d_i(\rho_i(\beta)\cdot x_i^+,y)\leq r_i$.
Similarly, there exists $r'_i>0$ such that for any $y\in\partial_{\infty}M_i$ and any $x_i^- \in \cX_i^-$, there is at most one element $\beta \in \mathcal{F}$ such that $d_i(y,\rho_i(\beta)^{-1}\cdot x_i^-)\leq r'_i$.

Let us check that $\mathcal{F}$ and
$$r_0 := \min_{i\in I_1\cup I'_2} \min(r_i,r'_i) > 0$$
satisfy the required property.
Consider for each $i\in I_1$ a set $\cY_i^+$ (\resp $\cY_i^-$) of $N'$ points (\resp projective hyperplanes) of $\PP(V_i)$, and for each $i\in I'_2$ a set $\cY_i^+$ (\resp $\cY_i^-$) of $N'$ points of $\partial_\infty M_i$.
By the choice \eqref{eqn:card-F} of cardinality of $\mathcal{F}$ and the pigeonhole principle, we can find $\beta \in \mathcal{F}$ such that $d_i(\rho_i(\beta)\cdot x_i^+,y_i^-)\geq r_0$ for all $(x_i^+,y_i^-) \in \cX_i^+ \times \cY_i^-$ and $d_i(y_i^+,\rho_i(\beta)^{-1}\cdot x_i^-) \geq r_0$ for all $(y_i^+,x_i^-) \in \cY_i^+\times\cX_i^-$, simultaneously for all $i\in I_1\cup I'_2$.
\end{proof}

\subsection{Moving points away}

We now prove the following.

\begin{prop} \label{prop:move-away}
In the setting~\ref{setting}, condition (*) always holds.
\end{prop}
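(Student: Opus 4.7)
\emph{Plan.} I would prove Proposition~\ref{prop:move-away} by contradiction. Suppose (*) fails for some finite sets $\cZ_i^{\pm}$, so that $\Gamma$ is covered by the finite union of ``bad'' subsets $B^{\pm}_{i,z^+,z^-}$ consisting of those $\gamma$ violating the relevant transversality at $(i,z^+,z^-)$. The goal is to show this finite union is proper by describing each bad set algebraically (for $i \in I_1$) and dynamically (for $i \in I'_2$), and then combining these descriptions.

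For each $i \in I_1$: strong irreducibility of $\rho_i(\Gamma)$ on $V_i$ passes to the Zariski closure $G_i := \overline{\rho_i(\Gamma)}^{\,\mathrm{Zar}}$, and the set $\{g \in G_i : g \cdot z^+ \in z^-\}$ is a \emph{proper} Zariski closed subset of $G_i$---otherwise the linear span of $G_i \cdot z^+$ would be a proper $G_i$-invariant subspace of $V_i$, contradicting irreducibility. Globally, inside $H := \overline{\rho_{I_1}(\Gamma)}^{\,\mathrm{Zar}} \subseteq \prod_{i \in I_1} \GL(V_i)$, which surjects onto each $G_i$, the union of all $I_1$-bad conditions is the $\rho_{I_1}$-preimage of a proper Zariski closed subset $W_{I_1} \subsetneq H$. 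For each $i \in I'_2$: the general type assumption forces the boundary stabilizer $\mathrm{Stab}_{\rho_i(\Gamma)}(z)$ of any $z \in \partial_{\infty}M_i$ to be a proper subsemigroup, since the two hyperbolic isometries with totally distinct fixed point sets witnessing general type cannot both fix $z$. Hence every $I'_2$-bad set lies in the $\rho_i$-preimage of a finite union of cosets of proper subsemigroups.

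The main obstacle, and the heart of the proof, is combining these descriptions into a single $\gamma \in \Gamma$. I would use ping-pong: for each $i \in I'_2$, pick $g_i, h_i \in \Gamma$ whose $\rho_i$-images are hyperbolic with fixed point sets pairwise totally distinct and disjoint from $\cZ_i^{\pm}$ (possible since $\rho_i(\Gamma)$ is of general type). A candidate $\gamma$ of the form $\gamma = \delta_0 \, g_{i_1}^{N_1} \delta_1 \, g_{i_2}^{N_2} \delta_2 \cdots g_{i_k}^{N_k}$ with the $N_j$ large and the $\delta_j \in \Gamma$ ``generic'' has, for each $i \in I'_2$, its $\rho_i$-image driven by ping-pong contraction into the transversal region for $\cZ_i^{\pm}$, escaping the proper cosets identified above; simultaneously, its $\rho_{I_1}$-image can be forced into $H \smallsetminus W_{I_1}$ by Zariski density of $\rho_{I_1}(\Gamma)$ in $H$ applied to one of the generic factors $\delta_j$. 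The crucial observation making this combination work is that ping-pong contraction only requires the powers $N_j$ to be sufficiently large (and is stable under inserting arbitrary generic elements between the ping-pong blocks), whereas Zariski genericity of $\rho_{I_1}(\gamma)$ is a codimension~$\geq 1$ open condition preserved under further generic perturbations. Thus both constraints can be met simultaneously, producing a $\gamma$ outside every bad set and contradicting the initial assumption.
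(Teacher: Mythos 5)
Your proposal shares the paper's starting point---the $I_1$ bad sets are captured by a proper Zariski-closed subset of the Zariski closure $H$ of $\rho_{I_1}(\Gamma)$, and the $I'_2$ bad sets sit inside cosets of boundary-point stabilizers---but the combination step is where it breaks down, and this is precisely the hard part of the proposition.

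The ping-pong construction $\gamma = \delta_0\, g_{i_1}^{N_1}\,\delta_1\, g_{i_2}^{N_2}\cdots g_{i_k}^{N_k}$ does not obviously give what you claim. Fix an index $i_2 \in I'_2$ and look at $\rho_{i_2}(\gamma)$: while $\rho_{i_2}(g_{i_2})^{N_2}$ indeed contracts towards the attracting fixed point of $\rho_{i_2}(g_{i_2})$, the other factors $\rho_{i_2}(g_{i_1})^{N_1}$, $\rho_{i_2}(\delta_j)$, $\rho_{i_2}(g_{i_3})^{N_3},\dots$ are completely uncontrolled isometries of $M_{i_2}$: they may be parabolic, elliptic, or hyperbolic with fixed points anywhere, and they are applied \emph{after} the contraction from the $g_{i_2}$-block, so they can move the contracted region back into the forbidden set $\cZ_{i_2}^+\cup\cZ_{i_2}^-$. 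The assertion that ping-pong contraction is ``stable under inserting arbitrary generic elements between the ping-pong blocks'' is false without substantial hypotheses on the inserted elements; and if you do impose such hypotheses (e.g.\ that the $\delta_j$ themselves be hyperbolic with controlled fixed points in every factor $M_i$), you are essentially requiring a \emph{simultaneously} proximal element, which is Proposition~\ref{prop:simult-prox-proper} in the paper---a result proved \emph{after} and \emph{using} Proposition~\ref{prop:move-away}, so the argument would become circular. The claim about Zariski genericity being ``preserved under further generic perturbations'' suffers from a similar gap: Zariski-open sets in $H$ are not stable under left or right multiplication by fixed elements, so making one factor $\delta_j$ Zariski-generic does not by itself place the whole product in $H\smallsetminus W_{I_1}$.

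The ingredient that the paper uses to close exactly this gap is the semigroup Neumann-type covering lemma (Lemma~\ref{lem:Neumann-semig}): if the semigroup were covered by finitely many translates $S_z H_z$ of boundary-point stabilizers, one could extract a \emph{single} $i$, a single $z$, and a finite $F$ with $\sigma(\Gamma) \subset F H_z F^{-1}$, contradicting the single-factor case (which your proposal does establish correctly via general type). This gives condition~(*) for $I'_2$ alone without any ping-pong. The second bridge the paper uses is Corollary~\ref{cor:finiteset} applied to $I'_2$: it produces a \emph{finite} corrector set $\mathcal{F}$, and then one intersects the finitely many translates $\rho(\beta)^{-1}\mathcal{U}$ (still a nonempty Zariski-open subset of $G$) to choose $\gamma$ by Zariski density and then post-compose with the appropriate $\beta \in \mathcal{F}$. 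Your proposal would need an argument of comparable precision in place of the heuristic ``ping-pong is robust.''
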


Our first step is to prove that condition (*) holds for $I_2'$ instead of $I_1 \cup I_2'$, and from this we then deduce the general case using Corollary~\ref{cor:finiteset} applied to~$I_2'$.
This first step is based on the following group-theoretic lemma (compare with \cite[Lem.\,4.1]{Neumann} in the case that $\Gamma$ is a group). 

\begin{lema} \label{lem:Neumann-semig}
Let $G$ be a group, $\Gamma$ a subsemigroup of~$G$, and $k\geq 1$ an integer.
For any subsets $H_1,\dots,H_k$ and $S_1,\dots,S_k$ of~$G$ with $S_i$ finite for all~$i$, if $\Gamma \subset \bigcup_{i=1}^k S_i H_i$, then there exist $1\leq i\leq k$ and a finite subset $F_i$ of~$G$ such that $\Gamma \subset F_i H_i H_i^{-1} F_i^{-1}$.
\end{lema}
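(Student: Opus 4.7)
The plan is to prove the lemma by induction on $k$. The key reformulation is that, setting $T_i := S_i H_i$, the product $T_i T_i^{-1}$ equals $S_i H_i H_i^{-1} S_i^{-1}$, which is already of the form $F H_i H_i^{-1} F^{-1}$ with the finite set $F := S_i$. In particular, for this specific choice of~$F$, the desired conclusion reduces to the set-theoretic inclusion $\Gamma \subset T_i T_i^{-1}$, equivalently to the condition that $\gamma T_i \cap T_i \neq \emptyset$ for every $\gamma \in \Gamma$.

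For the base case $k=1$, assuming $\Gamma$ nonempty, I would pick $\gamma_0 \in \Gamma$ and write $\gamma_0 = s_0 h_0 \in S_1 H_1$. For any $\gamma \in \Gamma$, the semigroup property gives $\gamma \gamma_0 \in \Gamma \subset S_1 H_1$, so $\gamma \gamma_0 = sh$ for some $s \in S_1$ and $h \in H_1$, and therefore $\gamma = s h h_0^{-1} s_0^{-1} \in S_1 H_1 H_1^{-1} S_1^{-1}$; the choice $F_1 := S_1$ works.

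For the inductive step ($k \geq 2$), I would apply the reformulation above to the last piece~$T_k$. If $\Gamma \subset T_k T_k^{-1}$, we are done with $i := k$ and $F_k := S_k$. Otherwise, there exists $\gamma_0 \in \Gamma$ with $\gamma_0 T_k \cap T_k = \emptyset$, and the strategy is to use this $\gamma_0$ to absorb the piece~$T_k$ into the remaining ones via left multiplication. Splitting $\Gamma = (\Gamma \cap T_k) \cup (\Gamma \setminus T_k)$ and using the crucial semigroup inclusion $\gamma_0 \Gamma \subset \Gamma \subset \bigcup_{i=1}^k T_i$, one verifies on the one hand that $\gamma_0(\Gamma \cap T_k) \subset \gamma_0 T_k$ misses $T_k$ and hence is contained in $\bigcup_{i<k} T_i$, and on the other hand that $\gamma_0(\Gamma \setminus T_k) \subset \bigcup_{i<k} \gamma_0 T_i = \bigcup_{i<k} (\gamma_0 S_i) H_i$. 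Combining these and multiplying on the left by $\gamma_0^{-1}$ yields
\[
\Gamma \;\subset\; \bigcup_{i<k} \bigl(\gamma_0^{-1} S_i \cup S_i\bigr) H_i,
\]
a covering of $\Gamma$ by $k-1$ sets of the required form with finite $S'_i := \gamma_0^{-1} S_i \cup S_i$. The inductive hypothesis then produces the desired $i \in \{1,\ldots,k-1\}$ and finite $F_i$.

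The only nontrivial point is the dichotomy in the inductive step: recognizing that the conclusion for the specific choice $F := S_k$ coincides exactly with the inclusion $\Gamma \subset T_k T_k^{-1}$, so that its failure provides a concrete element $\gamma_0 \in \Gamma$ with $\gamma_0 T_k \cap T_k = \emptyset$, which, combined with the semigroup property, allows one to reduce the number of pieces in the covering. The rest of the argument is routine manipulation of finite sets.
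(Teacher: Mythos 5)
Your proof is correct and follows essentially the same route as the paper: the base case uses the semigroup property to get $\gamma S_1H_1\cap S_1H_1\neq\emptyset$, and the inductive step uses exactly the dichotomy on whether $\Gamma\subset S_kH_kH_k^{-1}S_k^{-1}$, producing when it fails an element $\gamma_0\in\Gamma$ that absorbs the $k$-th piece into the first $k-1$ via $\Gamma\subset\bigcup_{i<k}(\gamma_0^{-1}S_i\cup S_i)H_i$. The reformulation in terms of $T_i=S_iH_i$ and the phrase ``$\gamma_0 T_k\cap T_k=\emptyset$'' is only cosmetic; the substance matches the paper's argument step for step.
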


\begin{proof}[Proof of Lemma~\ref{lem:Neumann-semig}]
We argue by induction on~$k$.

Suppose $k=1$.
Let $H_1,S_1$ be subsets of~$G$, with $S_1$ finite, such that $\Gamma \subset S_1 H_1$.
Since $\Gamma$ is a semigroup, for any $\gamma\in\Gamma$ we have $\gamma S_1 H_1 \cap S_1 H_1 \neq \emptyset$, hence $\gamma \in S_1 H_1 H_1^{-1} S_1^{-1}$.

Suppose $k\geq 2$.
Assuming the result is true for $k-1$, let us prove it for~$k$.
Let $H_1,\dots,H_k$ and $S_1,\dots,S_k$ be subsets of~$G$, with $S_i$ finite for all~$i$, such that $\Gamma \subset \bigcup_{i=1}^k S_i H_i$.
If $\Gamma \subset S_k H_k H_k^{-1} S_k^{-1}$, then we are done.
So assume that this is not the case.
Then there exists $\gamma \in \Gamma$ that does not belong to $S_k H_k H_k^{-1} S_k^{-1}$, or in other words such that $\gamma S_k H_k \cap S_k H_k = \emptyset$.
Since $\Gamma$ is a semigroup and $\Gamma \subset \bigcup_{i=1}^k S_i H_i$, we must have $\gamma (S_k H_k \cap \Gamma) \subset \bigcup_{i=1}^{k-1} S_i H_i$.
Then $\Gamma = \bigcup_{i=1}^k (S_i H_i\cap\Gamma) \subset \bigcup_{i=1}^{k-1} (S_i \cup \gamma^{-1}S_i) H_i$.
By induction, there exists $1\leq i\leq k-1$ and a finite subset $F_i$ of~$G$ such that $\Gamma \subset F_i H_i H_i^{-1} F_i^{-1}$.
\end{proof}

\begin{proof}[Proof of Proposition~\ref{prop:move-away}]
We first observe that for any $i\in I'_2$, condition (*) holds for $\{ i\}$ instead of $I_1\cup I'_2$.
Indeed, since the semigroup $\rho_i(\Gamma)$ is of general type, for any finite subsets $\cZ_i^+,\cZ_i^-$ of $\partial_{\infty}M_i$, there exists $\gamma\in\Gamma$ such that $\rho_i(\gamma)$ is hyperbolic with attracting and repelling fixed points both outside $\cZ_i^+\cup\cZ_i^-$ (see \cite[Prop.\,7.3.1--7.4.7]{dsu17}); applying a large power of $\rho_i(\gamma)$ to the set $\cZ_i^+\cup\cZ_i^-$ then takes it away from itself.

We next observe that condition (*) holds for $I'_2$ instead of $I_1\cup I'_2$.
Indeed, let $\sigma := (\rho_i)_{i\in I'_2} : \Gamma \to \prod_{i\in I'_2} \Isom(M_i)$.
Suppose by contradiction that condition (*) fails for~$I'_2$: there exist finite subsets $\cZ_i^+,\cZ_i^-$ of $\partial_{\infty}M_i$, for $i\in I'_2$, such that for any $\gamma\in\Gamma$ there exists $i\in I'_2$ with $\rho_i(\gamma)\cdot\cZ_i^+ \cap \cZ_i^- \neq \emptyset$ or $\cZ_i^+ \cap \rho_i(\gamma)\cdot\cZ_i^- \neq \emptyset$.
Then $\sigma(\Gamma) \subset \bigcup_{i\in I'_2} \bigcup_{z\in\cZ_i^+\cup\cZ_i^-} S_z H_z$ where $H_z = \mathrm{stab}_{\Isom(M_i)}(z) \times \prod_{i'\in I'_2\smallsetminus\{i\}} \Isom(M_{i'})$ and $S_z$ is some finite subset of $\prod_{i'\in I'_2} \Isom(M_{i'})$.
Applying Lemma~\ref{lem:Neumann-semig} to the group $\prod_{i'\in I'_2} \Isom(M_{i'})$ and its subsemigroup $\sigma(\Gamma)$, we obtain the existence of $i\in I'_2$, of a point $z\in\cZ_i^+\cup\cZ_i^-$, and of a finite subset $F$ of $\prod_{i'\in I'_2} \Isom(M_{i'})$ such that $\sigma(\Gamma) \subset F H_z F^{-1}$.
(Note that $H_z H_z^{-1} = H_z$ since $H_z$ is a group.)
Taking the projection to $\Isom(M_i)$, we obtain a finite subset $F_i$ of $\Isom(M_i)$ such that $\rho_i(\gamma)\cdot (F_i\cdot z) \cap (F_i\cdot z) \neq \emptyset$ for all $\gamma\in\Gamma$, and so condition~(*) fails for~$\{i\}$: contradiction.
Thus condition (*) holds for $I'_2$ instead of $I_1\cup I'_2$.

Let us now check that condition (*) holds for $I_1\cup I'_2$.
For this, choose arbitrary finite sets $\cZ_i^+$, $i\in I_1\cup I'_2$, of points of $\PP(V_i)$ or $\partial_{\infty}M_i$, and arbitrary finite sets $\cZ_i^-$, $i\in I_1\cup I'_2$, of points of $\PP(V_i^*)$ or $\partial_{\infty}M_i$.
We wish to find $\gamma \in \Gamma$ such that $\rho_i(\gamma)\cdot z_i^+$ is transverse to~$z_i^-$ and $z_i^+$ is transverse to $\rho_i(\gamma)\cdot z_i^-$ for all $z_i^+\in\cZ_i^+$ and all $z_i^-\in\cZ_i^-$, simultaneously for all $i\in I_1\cup I'_2$.

Let $\rho := (\rho_i)_{i\in I_1} : \Gamma \to \prod_{i\in I_1} \GL(V_i)$, let $G$ be the Zariski closure of $\rho(\Gamma)$ in $\prod_{i\in I_1} \GL(V_i)$, and for any $i\in I_1$ let $\pi_i : \prod_{i'\in I_1} \GL(V_{i'})\to\GL(V_i)$ be the natural projection.
For any $i\in I_1$, let $\mathcal{U}_i$ be the set of elements $g\in G$ such that $\pi_i(g)\cdot z_i^+$ is transverse to~$z_i^-$ and $z_i^+$ is transverse to $\pi_i(g)\cdot z_i^-$ for all $z_i^+\in\cZ_i^+$ and all $z_i^-\in\cZ_i^-$.
Since $\cZ_i^+$ is a finite set of points of $\PP(V_i)$, since $\cZ_i^-$ can be seen as a finite set of proper algebraic subvarieties of $\PP(V_i)$, and since the action of $\rho_i(\Gamma)$ on~$V_i$ is strongly irreducible, $\mathcal{U}_i$ is a nonempty Zariski open subset of~$G$ (see \eg \cite[\S\,6.1]{BenoistQuint-livre}).
Therefore $\mathcal{U} := \bigcap_{i\in I_1} \mathcal{U}_i$ is also a nonempty Zariski open subset of~$G$.

Let $\Gamma'$ be the set of elements $\gamma\in\Gamma$ such that $\rho_i(\gamma)\cdot (\cZ_i^+\cup\cZ_i^-) \cap (\cZ_i^+\cup\cZ_i^-) = \emptyset$ for all $i\in I'_2$.
By Corollary~\ref{cor:finiteset} for $I'_2$ instead of $I_1\cup I'_2$ (with $\cX_i^- = \cZ_i^+ \cup\cZ_i^-$ and $N' = \# (\cZ_i^+\cup\cZ_i^-)$, taking $\cY_i^+ = \rho_i(\gamma)\cdot (\cZ_i^+\cup\cZ_i^-)$), there is a finite subset $\mathcal{F}$ of~$\Gamma$ such that for any $\gamma\in\Gamma$, at least one element $\beta\gamma$, for $\beta\in\mathcal{F}$, belongs to~$\Gamma'$.

Since $\mathcal{U}$ is a nonempty Zariski open subset of~$G$, so is $\bigcap_{\beta\in\mathcal{F}}\, \rho(\beta)^{-1}\,\mathcal{U}$.
Since $\rho(\Gamma)$ is Zariski dense in~$G$, there exists $\gamma \in \Gamma$ such that $\rho(\gamma) \in \bigcap_{\beta\in\mathcal{F}}\, \rho(\beta)^{-1}\,\mathcal{U}$.
By the above, there exists $\beta\in\mathcal{F}$ such that $\gamma' := \beta \gamma$ belongs to~$\Gamma'$.
We have $\rho(\gamma') \in \mathcal{U}$, and so $\rho_i(\gamma)\cdot z_i^+$ is transverse to~$z_i^-$ and $z_i^+$ is transverse to $\rho_i(\gamma)\cdot z_i^-$ for all $z_i^+\in\cZ_i^+$ and all $z_i^-\in\cZ_i^-$, simultaneously for all $i\in I_1\cup I'_2$, as in condition~(*).
\end{proof}

\subsection{Simultaneously moving the basepoint}

The next lemma will be used in the proof of Proposition~\ref{prop:simult-prox-proper}.
For a semigroup~$\Gamma$, we define an \emph{infinite-index} subset to be a subset $H \subset \Gamma$ such that there is no finite subset $S$ of~$\Gamma$ with $\Gamma \subset S H$.

\begin{lema} \label{lem:simult-basept-infty}
Let $\Gamma$ be a semigroup and $I$ a finite set.
For each $i\in I$, let $(M_i,d_{M_i})$ be a Gromov hyperbolic metric space, $o_i\in M_i$ a basepoint, and $\rho_i : \Gamma \to \mathrm{Isom}(M_i)$ a representation such that $\{ d_{M_i}(o_i,\rho_i(\gamma)\cdot o_i) \,|\, \gamma\in\Gamma\}$ is unbounded.
Then for every $C>0$ there exists an element $\gamma\in\Gamma$ such that $d_{M_i}(o_i,\rho_i(\gamma)\cdot\nolinebreak o_i)\geq C$ for all $i\in I$; more precisely, there exists an infinite-index subset $H$ of~$\Gamma$ such that $d_{M_i}(o_i,\rho_i(\gamma)\cdot\nolinebreak o_i)\geq C$ for all $\gamma\in\Gamma\smallsetminus H$ and all $i\in I$.
\end{lema}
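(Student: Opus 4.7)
The plan is to reduce to a contradiction through Lemma~\ref{lem:Neumann-semig}. First, for each $i \in I$ I would set
$$H_i := \{ \gamma \in \Gamma \,:\, d_{M_i}(o_i, \rho_i(\gamma) \cdot o_i) < C \},$$
and let $H := \bigcup_{i \in I} H_i$. Observe that the existence of $\gamma \in \Gamma$ with $d_{M_i}(o_i, \rho_i(\gamma) \cdot o_i) \geq C$ for all~$i$ is precisely the statement $\Gamma \smallsetminus H \neq \emptyset$, and the ``more precisely'' clause is exactly that $H$ is of infinite index in~$\Gamma$. So I would aim directly for the latter stronger conclusion.

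Supposing for contradiction that $H$ has finite index, there would exist a finite subset $S \subset \Gamma$ with $\Gamma \subset SH = \bigcup_{i \in I} SH_i$. My next move is to transfer the situation to the ambient group $G := \prod_{i \in I} \Isom(M_i)$ via the diagonal map $\sigma := (\rho_i)_{i \in I} : \Gamma \to G$, enlarging each $\sigma(H_i)$ to
$$B_i := \big\{ (g_j)_{j \in I} \in G \,:\, d_{M_i}(o_i, g_i \cdot o_i) < C \big\},$$
so that $\sigma(\Gamma) \subset \bigcup_{i \in I} \sigma(S) B_i$. Lemma~\ref{lem:Neumann-semig}, applied to the subsemigroup $\sigma(\Gamma)$ of~$G$, would then produce some $i \in I$ and a finite $F \subset G$ with $\sigma(\Gamma) \subset F B_i B_i^{-1} F^{-1}$.

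To close the argument I would project to the $i$-th factor (writing $F'$ for the projection of $F$ to $\Isom(M_i)$): every $\rho_i(\gamma)$ then takes the form $f_1 h_1 h_2^{-1} f_2^{-1}$ with $f_1, f_2 \in F'$ and $h_1, h_2 \in \Isom(M_i)$ each moving $o_i$ by strictly less than~$C$. Using the triangle inequality together with the $\Isom(M_i)$-invariance of $d_{M_i}$ (which in particular gives $d_{M_i}(o_i, f^{-1} \cdot o_i) = d_{M_i}(o_i, f \cdot o_i)$), I would obtain
$$d_{M_i}(o_i, \rho_i(\gamma) \cdot o_i) \leq 2 \max_{f \in F'} d_{M_i}(o_i, f \cdot o_i) + 2C,$$
a bound uniform in $\gamma \in \Gamma$. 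This contradicts the hypothesis that $\{ d_{M_i}(o_i, \rho_i(\gamma) \cdot o_i) : \gamma \in \Gamma\}$ is unbounded.

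The only mildly delicate point is that Lemma~\ref{lem:Neumann-semig} produces a \emph{two-sided} covering $F B_i B_i^{-1} F^{-1}$ rather than the one-sided covering $F B_i$ one might naively hope for; the isometric nature of each representation $\rho_i$ is exactly what is needed to turn such a two-sided bound back into a uniform displacement bound on $o_i$. Everything else is routine manipulation of the triangle inequality, so I expect no genuine obstacle once the right ambient group $G$ has been selected.
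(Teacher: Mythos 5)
Your proof is correct and follows essentially the same route as the paper's: define $H_i$, apply Lemma~\ref{lem:Neumann-semig} in the product group $\prod_{i\in I}\Isom(M_i)$ to the subsemigroup $\sigma(\Gamma)$, project to the $i$-th factor, and contradict unboundedness via the triangle inequality and the symmetry $d_{M_i}(o_i,g\cdot o_i)=d_{M_i}(o_i,g^{-1}\cdot o_i)$. The only (immaterial) difference is that you enlarge $\sigma(H_i)$ to the cylinder set $B_i$ before invoking the covering lemma, whereas the paper applies it directly to $\sigma(H_i)$; both yield the same uniform displacement bound after projection.
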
 

\begin{proof}
Fix $C>0$.
For each $i\in I$, let $H_i$ be the subset of~$\Gamma$ consisting of those elements $\gamma\in\Gamma$ such that $d_{M_i}(o_i,\rho_i(\gamma)\cdot o_i)<C$.
Let us show that $H := \bigcup_{i\in I} H_i$ is infinite-index in~$\Gamma$.

For this we start by observing that for any $i\in I$ and any $g,g'\in\Isom(M_i)$, we have $d_{M_i}(o_i,g\cdot o_i) = d_{M_i}(o_i,g^{-1}\cdot o_i)$ and, by the triangle inequality, 
\begin{equation} \label{eqn:triangle-ineq-dist-o}
d_{M_i}(o_i,gg'\cdot o_i) \leq d_{M_i}(o_i,g\cdot o_i) + d_{M_i}(g\cdot o_i,gg'\cdot o_i) = d_{M_i}(o_i,g\cdot o_i) + d_{M_i}(o_i,g'\cdot o_i).
\end{equation}

Suppose by contradiction that there is a finite subset $S$ of~$\Gamma$ such that $\Gamma \subset SH =  \bigcup_{i\in I} S H_i$.
Let $\sigma := (\rho_i)_{i\in I} : \Gamma \to \prod_{i\in I} \Isom(M_i)$.
Applying Lemma~\ref{lem:Neumann-semig} to the group $\prod_{i\in I} \Isom(M_i)$ and its subsemigroup $\sigma(\Gamma)$, we obtain the existence of $i\in I$ and of a finite subset $F$ of $\prod_{i'\in I} \Isom(M_{i'})$ such that $\sigma(\Gamma) \subset F \sigma(H_i) \sigma(H_i)^{-1} F^{-1}$.

Taking the projection to $\Isom(M_i)$, we obtain a finite subset $F_i$ of $\Isom(M_i)$ such that for any $\gamma\in\Gamma$, there exist $\sigma,\sigma'\in F_i$ and $g,g'\in\rho_i(H_i)$ with $\rho_i(\gamma) = \sigma g {g'}^{-1} {\sigma'}^{-1}$; by \eqref{eqn:triangle-ineq-dist-o}, we have
\begin{eqnarray*}
d_{M_i}(o_i,\rho_i(\gamma)\cdot o_i) & \leq & d_{M_i}(o_i,\sigma\cdot o_i) + d_{M_i}(o_i,g\cdot o_i) + d_{M_i}(o_i,g'\cdot o_i) + d_{M_i}(o_i,\sigma'\cdot o_i)\\
& \leq & 2C + 2\,\max_{\sigma''\in F_i} d_{M_i}(o_i,\sigma''\cdot o_i),
\end{eqnarray*}
hence the set $\{ d_{M_i}(o_i,\rho_i(\gamma)\cdot o_i) \,|\, \gamma\in\Gamma \}$ is bounded: contradiction.
\end{proof} 

\begin{remark}
We cannot argue similarly to show that if for each $i\in I$, we have a Euclidean space $V_i$ of dimension $\geq 2$ and a representation $\rho_i : \Gamma\to\GL(V_i)$ such that $\{ (\mu_1-\mu_2)(\rho_i(\gamma)) \,|\, \gamma\in\Gamma\}$ is unbounded, then for every $C>0$ there exists $\gamma\in\Gamma$ with $(\mu_1-\mu_2)(\rho_i(\gamma))\geq C$ for all $i\in I$.
Indeed, it is \emph{not} true in general that $(\mu_1-\mu_2)(gg') \leq (\mu_1-\mu_2)(g) + (\mu_1-\mu_2)(g')$.
\end{remark}

\section{A simultaneous Abels--Margulis--Soifer lemma assuming the existence of a simultaneously proximal element} \label{sec:simult-AMS-simult-prox}

In this section we prove the following version of Theorem~\ref{thm:main}, assuming the existence of a simultaneously proximal element.
This existence will be shown to hold in Section~\ref{sec:proof-exists-simult-prox}.

\begin{teo} \label{teo.generalresult}
In the setting~\ref{setting}, suppose that there exists $\gamma_0\in\Gamma$ such that $\rho_i(\gamma_0)$ is proximal in $\PP(V_i)$ for all $i\in I_1$ and $\rho_i(\gamma_0)$ is proximal in $\partial_{\infty}M_i$ for all $i\in I_2$.
Then the conclusion of Theorem~\ref{thm:main} holds, \ie there exists $r_0>0$ such that for any $r_0\geq r\geq\eps>0$, there is a finite subset $S$ of~$\Gamma$ with the following property: for any $\gamma \in \Gamma$, we can find $s\in S$ such that $\rho_i(\gamma s)$ is $(r,\eps)$-proximal in $\PP(V_i)$ for all $i\in I_1$ and $\rho_i(\gamma s)$ is $(r,\eps)$-proximal in $\partial_{\infty}M_i$ for all $i \in I_2$.
\end{teo}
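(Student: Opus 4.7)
The plan is to adapt the classical Abels--Margulis--Soifer strategy to the present simultaneous setting, using that condition~(*) always holds (Proposition~\ref{prop:move-away}). For each $i\in I_1\cup I_2$, let $x_i^+$ and $X_i^-$ denote the attracting and repelling objects of $\rho_i(\gamma_0)$. By Propositions~\ref{prop:Lip-proj} and~\ref{prop:Lip-hyp} applied to $\gamma_0^N$, the associated points $y^+_{\rho_i(\gamma_0^N)}\to x_i^+$ and $Y^-_{\rho_i(\gamma_0^N)}\to X_i^-$ as $N\to\infty$, with the contraction factors tending to zero. I would then apply Corollary~\ref{cor:finiteset} with $\cX_i^+=\{x_i^+\}$ and $\cX_i^-=\{X_i^-\}$ for each $i\in I_1\cup I_2'$ (and $N'=1$), obtaining a constant $r_0>0$ and a finite set $\mathcal{F}\subset\Gamma$ with the property that, for any $\gamma\in\Gamma$, applied with $\cY_i^-=\{Y^-_{\rho_i(\gamma)}\}$ and $\cY_i^+=\{y^+_{\rho_i(\gamma)}\}$, there exists $\beta=\beta(\gamma)\in\mathcal{F}$ satisfying
\[
d_i\bigl(\rho_i(\beta)\cdot x_i^+,\,Y^-_{\rho_i(\gamma)}\bigr)\geq r_0\quad\text{and}\quad d_i\bigl(y^+_{\rho_i(\gamma)},\,\rho_i(\beta)^{-1}\cdot X_i^-\bigr)\geq r_0
\]
simultaneously for all $i\in I_1\cup I_2'$.

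Given $r_0\geq r\geq\varepsilon>0$ (shrinking $r_0$ if necessary), I would choose $N$ large enough, depending on $r$, $\varepsilon$, and $\mathcal{F}$, so that the element $s_\beta:=\beta\gamma_0^N\beta$ is, in every representation $\rho_i$, sharply contracting on the $\varepsilon$-complement of $\rho_i(\beta)^{-1}\cdot X_i^-$, with image confined to a ball of radius $\ll\varepsilon$ around $\rho_i(\beta)\cdot x_i^+$. The candidate finite set is $S:=\{s_\beta\,:\,\beta\in\mathcal{F}\}$, possibly enlarged by finitely many elements $\beta\gamma_0^{N'}\beta$ with varying $N'$ for the lineal case. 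For $\gamma\in\Gamma$ and $i\in I_1\cup I_2'$, I would apply Lemma~\ref{l.critprox-proj} or Lemma~\ref{l.critprox-hyp} to $\rho_i(\gamma s_\beta)$ with candidate attractor/repellor $w^+:=\rho_i(\gamma\beta)\cdot x_i^+$ and $w^-:=\rho_i(\beta)^{-1}\cdot X_i^-$. Conditions~(2) and~(3) of that lemma follow by composing Lipschitz/contraction bounds of Propositions~\ref{prop:Lip-proj} and~\ref{prop:Lip-hyp} along the factors $\rho_i(\gamma)\circ\rho_i(\beta)\circ\rho_i(\gamma_0^N)\circ\rho_i(\beta)$, using the uniform bi-Lipschitz behavior of $\rho_i(\beta)$ for $\beta\in\mathcal{F}$, the strong contraction of $\rho_i(\gamma_0^N)$, and the $r_0$-separation of $\rho_i(\beta)\cdot x_i^+$ from $Y^-_{\rho_i(\gamma)}$ that controls the final factor $\rho_i(\gamma)$. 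For $i\in I_2\setminus I_2'$ (lineal case), I would instead invoke Lemma~\ref{lem.linealcase}, arranging via the triangle inequality and Lemma~\ref{lem:simult-basept-infty} that $d_{M_i}(o_i,\rho_i(\gamma s)\cdot o_i)$ exceeds the threshold of Lemma~\ref{lem.linealcase} for some $s\in S$.

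The hard part is condition~(1), namely $d_i(w^+,w^-)\geq 6r$. Combining the second inequality of Corollary~\ref{cor:finiteset} with Proposition~\ref{prop:Lip-proj}/\ref{prop:Lip-hyp} (which forces $w^+=\rho_i(\gamma\beta)\cdot x_i^+$ to lie close to $y^+_{\rho_i(\gamma)}$) would give $d_i(w^+,w^-)\geq r_0-o(1)\geq 6r$; the subtlety is that the closeness $d_i(w^+,y^+_{\rho_i(\gamma)})$ is only controlled by $D\,e^{-(\mu_1-\mu_2)(\rho_i(\gamma))}$ (or its hyperbolic analogue), which is weak when $\rho_i(\gamma)$ has a small singular gap or small displacement. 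The resolution is to choose $N$ so large that $\rho_i(\gamma s_\beta)$ itself has large singular gap (respectively, displacement), which follows from the large gap of $\rho_i(\gamma_0^N)$ together with the transversality $d_i(\rho_i(\beta)\cdot x_i^+,Y^-_{\rho_i(\gamma)})\geq r_0$; Proposition~\ref{prop:Lip-proj}/\ref{prop:Lip-hyp} applied directly to $\rho_i(\gamma s_\beta)$ then relocates $y^+_{\rho_i(\gamma s_\beta)}$ and $Y^-_{\rho_i(\gamma s_\beta)}$ within the $r_0$-separated configuration provided by Corollary~\ref{cor:finiteset}, allowing us to conclude $(r,\varepsilon)$-proximality uniformly in $\gamma$.
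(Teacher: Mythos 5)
The proposal identifies the right ingredients and the right obstruction, but the obstruction is not resolved. The paper builds the multiplier as $s=\beta\gamma_0^n\beta'$ with \emph{two} elements $\beta,\beta'\in\mathcal{F}$ chosen \emph{sequentially}: first $\beta$ is chosen via Corollary~\ref{cor:finiteset} with $\cY_i^-=\{Y^-_{\rho_i(\gamma)}\}$ so that $d_i(\rho_i(\beta)\cdot x_i^+,Y^-_{\rho_i(\gamma)})\geq 3r$, and \emph{then}, once the candidate attractor $w^+=\rho_i(\gamma\beta)\cdot x_i^+$ is known, a second application of Corollary~\ref{cor:finiteset} with $\cY_i^+=\{\rho_i(\gamma\beta)\cdot x_i^+\}$ produces $\beta'$ with $d_i(\rho_i(\gamma\beta)\cdot x_i^+,\,\rho_i(\beta')^{-1}\cdot X_i^-)\geq 3r$. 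This second choice is precisely what yields condition~(1) of Lemmas~\ref{l.critprox-proj}/\ref{l.critprox-hyp} \emph{by construction}. You instead use the same $\beta$ on both sides, $s_\beta=\beta\gamma_0^N\beta$, and only get $d_i(y^+_{\rho_i(\gamma)},\rho_i(\beta)^{-1}\cdot X_i^-)\geq r_0$; as you correctly note, this is not the quantity needed, because $w^+=\rho_i(\gamma\beta)\cdot x_i^+$ may be far from $y^+_{\rho_i(\gamma)}$ when $\rho_i(\gamma)$ has a small singular gap or displacement. You then claim to close this gap by choosing $N$ large so that $\rho_i(\gamma s_\beta)$ itself has a large gap, and invoking Propositions~\ref{prop:Lip-proj}/\ref{prop:Lip-hyp}. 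This step does not work: a large value of $(\mu_1-\mu_2)(g)$ controls the contraction of $g$ on $B^\eps_{Y^-_g}$, but it does not force the candidate points $w^\pm$ (nor the actual attracting/repelling fixed points) to be uniformly separated. One can easily produce elements of $\GL_2(\RR)$ with arbitrarily large singular gap whose attracting fixed point and repelling hyperplane are arbitrarily close (conjugate a diagonal element by a long shear); for such $g$ the data $y^+_g$, $Y^-_g$ are well-defined, but $d(x^+_g,X^-_g)$ can be tiny, so there is no ``relocation into an $r_0$-separated configuration.'' Without condition~(1), Lemma~\ref{l.critprox-proj}/\ref{l.critprox-hyp} cannot be applied, and the conclusion of $(r,\eps)$-proximality of $\rho_i(\gamma s_\beta)$ is not established. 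The fix is structural, not quantitative: you need the second multiplier $\beta'$ chosen after $w^+$ is known.

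Two secondary points. First, for the lineal indices $i\in I_2\smallsetminus I'_2$ one must first reduce to the subsemigroup $\Gamma_0$ fixing both global fixed points pointwise (Lemma~\ref{lem:lineal-fix}), so that Lemma~\ref{lem.linealcase} is applicable; you do not mention this reduction. Second, the issue you gesture at with ``possibly enlarged by finitely many elements with varying $N'$'' is real: once $s\in S$ is fixed, $d_{M_i}(o_i,\rho_i(\gamma s)\cdot o_i)$ can be small for some $\gamma$ by cancellation, so one needs several widely spaced exponents $n_1<\dots<n_{N+1}$ and a pigeonhole argument to guarantee that for each $\gamma$ at least one $S_{n_j}$ avoids the bad window in every lineal factor simultaneously. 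This should be spelled out, since it is where the finiteness of $\#(I_2\smallsetminus I'_2)$ enters.

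Note that this statement is precisely Theorem~\ref{teo.generalresult}.

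\begin{remark}
Observe that this is precisely the paper's own Theorem~\ref{teo.generalresult}, so the relevant comparison is to the proof in Section~\ref{sec:simult-AMS-simult-prox}.
\end{remark}
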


\subsection{A preliminary reduction} \label{subsec:reduction}

Recall the notation $\rho_i(\Gamma)_{\mathrm{fix}}$ for lineal semigroups from Section~\ref{subsubsec:isom-hyp}.

\begin{lema} \label{lem:lineal-fix}
If $\Gamma$ and $(\rho_i)_{i\in I_1\cup I_2}$ are as in the setting~\ref{setting}, then so are $\Gamma_0$ and $(\rho_i|_{\Gamma_0})_{i\in I_1\cup I_2}$, where $\Gamma_0$ is the subsemigroup of~$\Gamma$ consisting of those elements $\gamma\in\Gamma$ such that $\rho_i(\gamma) \in \rho_i(\Gamma)_{\mathrm{fix}}$ for all $i\in I_2\smallsetminus I'_2$.
Moreover, there is a finite subset $S_0$ of $\Gamma$ such that for each $\gamma\in\Gamma$, we can find $s_0\in S_0$ with $\gamma s_0 \in \Gamma_0$.
\end{lema}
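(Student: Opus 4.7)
My plan is to construct a natural semigroup homomorphism whose kernel is $\Gamma_0$, use it to build the finite set $S_0$, and then verify the hypotheses of Setting~\ref{setting} survive the restriction.

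For each $i \in I_2 \smallsetminus I'_2$, the semigroup $\rho_i(\Gamma)$ is lineal (see Section~\ref{subsubsec:isom-hyp}) and hence preserves its two-point limit set $\{x_0^{(i)},x_1^{(i)}\}$ setwise. The induced action on this pair defines a semigroup homomorphism $\sigma_i : \Gamma \to \mathrm{Sym}(\{x_0^{(i)},x_1^{(i)}\}) \simeq \ZZ/2\ZZ$, whose kernel is precisely $\{\gamma \in \Gamma : \rho_i(\gamma) \in \rho_i(\Gamma)_{\mathrm{fix}}\}$. Combining them yields a semigroup homomorphism $\sigma := (\sigma_i)_{i \in I_2 \smallsetminus I'_2} : \Gamma \to (\ZZ/2\ZZ)^{I_2 \smallsetminus I'_2}$ with $\Gamma_0 = \ker\sigma$; in particular $\Gamma_0$ is automatically a subsemigroup of~$\Gamma$. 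The image $\sigma(\Gamma)$ is a non-empty subsemigroup of a finite group, and any such subsemigroup is itself a subgroup; picking one lift $s_v \in \Gamma$ of each $v \in \sigma(\Gamma)$ gives a finite set $S_0 := \{s_v\}_v \subset \Gamma$. Since the target has exponent~$2$, for any $\gamma \in \Gamma$ the element $s_0 := s_{\sigma(\gamma)} \in S_0$ satisfies $\sigma(\gamma s_0) = \sigma(\gamma) + \sigma(\gamma) = 0$, so $\gamma s_0 \in \Gamma_0$ as required.

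The two Gromov hyperbolic cases of Setting~\ref{setting} are then immediate. For $i \in I_2 \smallsetminus I'_2$, every element of $\rho_i(\Gamma_0)$ pointwise fixes both $x_0^{(i)}$ and $x_1^{(i)}$, so $\rho_i(\Gamma_0)$ has two global fixed points and hence not a unique one. For $i \in I'_2$, the general type semigroup $\rho_i(\Gamma)$ contains two hyperbolic elements $\rho_i(\gamma_1), \rho_i(\gamma_2)$ with pairwise distinct attracting/repelling fixed point sets; since $\sigma(\gamma^2) = 2\sigma(\gamma) = 0$ in $(\ZZ/2\ZZ)^{I_2 \smallsetminus I'_2}$, the squares $\gamma_1^2, \gamma_2^2$ lie in $\Gamma_0$, and $\rho_i(\gamma_j)^2$ remains hyperbolic with the same fixed points as $\rho_i(\gamma_j)$. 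Hence $\rho_i(\Gamma_0)$ is itself of general type, which forces it to have no global fixed point on $\partial_\infty M_i$ at all.

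The main technical step is strong irreducibility for $i \in I_1$, which I plan to handle via Zariski closures. The construction of $S_0$ gives the inclusion $\rho_i(\Gamma) \subset \bigcup_{s \in S_0} \rho_i(\Gamma_0)\,\rho_i(s)^{-1}$ inside $\GL(V_i)$; passing to Zariski closures (using that right translation is a Zariski homeomorphism and that closure commutes with finite unions), the Zariski closure $H_i$ of $\rho_i(\Gamma_0)$ is a closed subgroup of the Zariski closure $G_i$ of $\rho_i(\Gamma)$, covered by finitely many right cosets and hence of finite index. Strong irreducibility of $\rho_i(\Gamma)$ extends to $G_i$ since the stabilizer of a fixed finite union of proper subspaces is Zariski closed; it then descends to the finite-index subgroup $H_i$ via the standard orbit argument (any $H_i$-invariant finite union $W$ has a finite $G_i$-orbit by finite index, and the union over this orbit is a $G_i$-invariant finite union of proper subspaces, contradicting strong irreducibility of $G_i$); and finally it passes from $H_i$ to the Zariski dense subsemigroup $\rho_i(\Gamma_0)$ by the same closed-condition argument. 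I expect this chain of passages through the Zariski closures to be the only non-routine ingredient of the proof.
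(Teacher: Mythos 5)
Your proof is correct and, for the construction of the $\ZZ/2\ZZ$-valued homomorphism and the finite set $S_0$, as well as for the verification of the two Gromov hyperbolic cases, follows the same route as the paper (the paper defines $\varphi_i$ directly as the indicator of $\Gamma\smallsetminus\Gamma_0$ rather than via the permutation action on $\{x_0^{(i)},x_1^{(i)}\}$, but these are the same map; your version has the merit of making explicit the fact, tacitly used by the paper, that a lineal semigroup preserves its two-point limit set, without which $\varphi_i$ would not be a homomorphism). The one genuinely different step is strong irreducibility for $i\in I_1$. The paper argues directly at the level of the semigroup: given a $\rho_i(\Gamma_0)$-invariant finite union $W$, it uses the covering $\Gamma=\bigcup_{s_0\in S_0}\{\gamma: s_0\gamma\in\Gamma_0\}$ to show that $\{\rho_i(\gamma)\cdot W:\gamma\in\Gamma\}\subset\{\rho_i(s_0)^{-1}\cdot W:s_0\in S_0\}$ is finite, and then that the union $\bigcup_\gamma\rho_i(\gamma)\cdot W$ is a $\rho_i(\Gamma)$-invariant finite union, forcing it to be $\{0\}$ or $V_i$. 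You instead pass to Zariski closures $H_i\subset G_i$, observe $[G_i:H_i]<\infty$ from the coset covering, and use the standard facts that strong irreducibility is a Zariski-closed condition and is inherited by finite-index closed subgroups. Both work; the paper's route is more elementary (no algebraic group theory beyond what the Setting already requires), while yours isolates a clean finite-index statement that some readers may find more transparent. One small point you could make explicit in the finite-index step: the orbit $G_i\cdot W$ is finite because $\mathrm{Stab}_{G_i}(W)\supset H_i$ has finite index, so the orbit has at most $[G_i:H_i]$ elements; with that spelled out the argument is airtight.
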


\begin{proof}
We first observe that there is a finite subset $S_0$ of $\Gamma$ such that for each $\gamma\in\Gamma$, we can find $s_0\in S_0$ with $\gamma s_0 \in \Gamma_0$.
Indeed, for each $i \in I_2\smallsetminus I'_2$, we can define a representation $\varphi_i : \Gamma \to \ZZ/2\ZZ$ of the semigroup~$\Gamma$ by $\varphi_i(\gamma) = 0$ if $\rho_i(\gamma)  \in \rho_i(\Gamma)_{\mathrm{fix}}$, and $\varphi_i(\gamma) = 1$ otherwise.
Then $\varphi = (\varphi_i)_{i \in I_2\smallsetminus I'_2}: \Gamma \to (\ZZ/2\ZZ)^{I_2\smallsetminus I'_2}$ is a representation, and $\Gamma_0 = \varphi^{-1}(0)$.
Let $S_0$ be a finite subset of~$\Gamma$ containing an element of $\varphi^{-1}(x)$ for each $x \in \varphi(\Gamma)$.
For any $\gamma \in \Gamma$, there exists $s_0\in S_0$ such that $\varphi(\gamma) = \varphi(s_0)$; then $\varphi(\gamma s_0) = \varphi(s_0)^2 = 0$, and so $\gamma s_0\in\Gamma_0$.

Let $i\in I_1$.
By assumption there exists $\gamma\in\Gamma$ such that $\rho_i(\gamma)$ is proximal in $\PP(V_i)$; then $\gamma^2\in\Gamma_0$ and $\rho_i(\gamma^2)$ is still proximal in $\PP(V_i)$.
We claim that $\rho_i(\Gamma_0)$ acts strongly irreducibly on~$V_i$.
Indeed, consider a $\rho_i(\Gamma_0)$-invariant finite union of linear subspaces of~$V_i$.
By the observation above, the image of this finite union under $\rho_i(\Gamma)$ is still a finite union of linear subspaces of~$V$, now $\rho_i(\Gamma)$-invariant.
Since $\rho_i(\Gamma)$ acts strongly irreducibly on~$V_i$, this finite union must be $\{0\}$ or~$V_i$, which shows that $\rho_i(\Gamma_0)$ acts strongly irreducibly on~$V_i$.

Let $i\in I_2$.
For any element $\gamma\in\Gamma$ such that $\rho_i(\gamma)$ is proximal in $\partial_{\infty}M_i$, the element $\gamma^2$ belongs to~$\Gamma_0$ and $\rho_i(\gamma^2)$ is still proximal in $\partial_{\infty}M_i$, with the same fixed points.
In particular, if $\rho_i(\Gamma)$ is of general type (\resp lineal), then so is  $\rho_i(\Gamma_0)$.
\end{proof}

\subsection{Proof of Theorem~\ref{teo.generalresult}}

By Lemma~\ref{lem:lineal-fix}, we may and shall assume that $\rho_i(\gamma) \in \rho_i(\Gamma)_{\mathrm{fix}}$ for all $i\in I_2\smallsetminus I'_2$.

For any $i\in I_1$, let $x_i^+ \in \PP(V_i)$ be the attracting fixed point of $\rho_i(\gamma_0)$, and $X_i^- \subset \PP(V_i)$ its repelling hyperplane.
For any $i\in I_2$, let $x_i^+ \in \partial_\infty M_i$ be the attracting fixed point of $\rho_i(\gamma_0)$, and $X_i^- \in \partial_\infty M_i$ its repelling fixed point.
By Proposition~\ref{prop:move-away}, condition (*) holds.
For $\cX_i^+ = \{ x_i^+\}$ and $\cX_i^- = \{ X_i^-\}$, with $N'=1$, let $r_0>0$ be the constant and $\mathcal{F}$ the finite subset of~$\Gamma$ given by Corollary~\ref{cor:finiteset}.
Let $r_1 := \min_{i\in I_1\cup I_2} d_i(x_i^+,X_i^-) > 0$, where for $i\in I_1$ we denote by $d_i = d_{\PP(V_i)}$ the distance function on $\PP(V_i)$ from Section~\ref{sec:proj}.

Note that each $\rho_i(\beta)$ and $\rho_i(\beta)^{-1}$ for $i\in I_1\cup I_2$ and $\beta\in\Gamma$ is globally Lipschitz.
Indeed, $\rho_i(\beta)$ is a diffeomorphism of the compact space $\mathbb{P}(V_i)$ for $i\in I_1$, and for $i\in I_2$ one can, for instance, apply Lemma~\ref{lem:Shad}.\eqref{item:Shad-2} and Proposition~\ref{prop:Lip-hyp-Shad}.\eqref{item:Lip-hyp-Shad-1}.
Since $I_1\cup I_2$ and $\mathcal{F}$ are finite sets, there exists $D_0\geq 1$ such that $\rho_i(\beta)$ and $\rho_i(\beta)^{-1}$ are $D_0$-Lipschitz for all $i\in I_1\cup I_2$ and all $\beta\in\mathcal{F}$.

Fix any $0 < \eps \leq r \leq \min(r_0/3,r_1/4)$, and let us show the existence of a finite subset $S$ of~$\Gamma$ with the property that for any $\gamma \in \Gamma$, we can find $s\in S$ such that $\rho_i(\gamma s)$ is $(r,\eps)$-proximal in $\PP(V_i)$ or $\partial_{\infty}M_i$ for all $i \in I_1\cup I_2$.

For each $i$ in $I_1$ (\resp $I_2$) and each $\gamma\in\Gamma$, let $Y_{\rho_i(\gamma)}^-$ be given by Proposition~\ref{prop:Lip-proj} (\resp Proposition~\ref{prop:Lip-hyp} for $\eps/2$ instead of~$\eps$).
Then there exists $D\geq D_0\geq 1$ such that
\begin{enumerate}[(a)]
  \item\label{item:a} for any $i\in I_1\cup I'_2$ and any $\gamma\in\Gamma$, the restriction of $\rho_i(\gamma)$ to $B^{\eps/2}_{Y_{\rho_i(\gamma)}^-}$ is $D$-Lipschitz.
\end{enumerate}
Choose $\eps'>0$ small enough so that $2D^3\eps' \leq \eps$.
There exists $n_0\in\NN$ such that for any $n\geq n_0$,
\begin{enumerate}[(a)] \setcounter{enumi}{1}
  \item\label{item:b} $\rho_i(\gamma_0^n)$ is $(r,\eps')$-proximal for all $i\in I_1\cup I'_2$.
\end{enumerate}
Choose such an~$n$ and let us check that the finite set
\begin{equation} \label{eqn:S_n}
S_n := \{ \beta \gamma_0^n \beta' ~|~ \beta,\beta'\in\mathcal{F} \}
\end{equation}
satisfies that for any $\gamma\in\Gamma$ we can find $s\in S_n$ such that $\rho_i(\gamma s)$ is $(r,\eps)$-proximal in $\PP(V_i)$ or $\partial_{\infty}M_i$ for all $i \in I_1\cup I'_2$.

Consider an element $\gamma \in \Gamma$.
By Corollary~\ref{cor:finiteset} with $\cY_i^- = \{ Y_{\rho_i(\gamma)}^-\}$, we can find $\beta\in\mathcal{F}$ such that
\begin{enumerate}[(a)] \setcounter{enumi}{2}
  \item\label{item:c} $d_i(\rho_i(\beta)\cdot x_i^+, Y_{\rho_i(\gamma)}^-) \geq 3r$ for all $i\in I_1\cup I_2'$, 
\end{enumerate}
and by Corollary~\ref{cor:finiteset} with $\cY_i^+ = \{ \rho_i(\gamma\beta)\cdot x_i^+\}$, we can find $\beta'\in\mathcal{F}$ such that
\begin{enumerate}[(a)] \setcounter{enumi}{3}
  \item\label{item:d} $d_i(\rho_i(\gamma\beta)\cdot x_i^+, \rho_i(\beta')^{-1}\cdot X_i^-) \geq 3r$ for all $i \in I_1\cup I_2'$. 
\end{enumerate}

We claim that for any $i\in I_1\cup I'_2$ the element $\rho_i(\gamma \beta \gamma_0^n \beta')$ sends $B^{\eps/2}_{\rho_i(\beta')^{-1}\cdot X_i^-}$ into $b_{\rho_i(\gamma\beta)\cdot x_i^+}^{\eps/2}$ in an $(\eps/2)$-Lipschitz way.
Indeed, this is a direct consequence of the following four observations:
\begin{enumerate}
  \item $\rho_i(\beta)$ and $\rho_i(\beta)^{-1}$ are $D$-Lipschitz and $2D\eps' \leq \eps$, hence $\rho_i(\beta)$ sends $B^{\eps/2}_{\rho_i(\beta')^{-1}\cdot X_i^-}$ into $B^{\eps'}_{X_i^-}$;
  \item by \eqref{item:b}, the element $\rho_i(\gamma_0^n)$ sends $B_{X_i^-}^{\eps'}$ into $b_{x_i^+}^{\eps'}$ in an $\eps'$-Lipschitz way;
  \item $\rho_i(\beta)$ is $D$-Lipschitz, hence sends $b_{x_i^+}^{\eps'}$ into $b_{\rho_i(\beta)\cdot x_i^+}^{D\eps'}$;
  \item by \eqref{item:c}, since $D\eps' + \eps/2 \leq 3r$, we have $b_{\rho_i(\beta)\cdot x_i^+}^{D\eps'} \subset B^{\eps/2}_{Y_{\rho_i(\gamma)}^-}$, and so by \eqref{item:a} the element $\rho_i(\gamma)$ is $D$-Lipschitz on $b_{\rho_i(\beta)\cdot x_i^+}^{D\eps'}$ and sends it into $b_{\rho_i(\gamma\beta)\cdot x_i^+}^{D^2\eps'} \subset b_{\rho_i(\gamma\beta)\cdot x_i^+}^{\eps/2}$.
\end{enumerate}
Finally, by \eqref{item:d} we have $d(\rho_i(\gamma\beta)\cdot x_i^+, \rho_i(\beta')^{-1}\cdot X_i^-) \geq 3r$.
Lemmas \ref{l.critprox-proj} and~\ref{l.critprox-hyp} with $(r/2,\eps/2)$ instead of $(r,\eps)$ then imply that $\rho_i(\gamma \beta \gamma_0^n \beta')$ is $(r,\eps)$-proximal in $\PP(V_i)$ or $\partial_{\infty}M_i$ for all $i\in I_1\cup I_2'$.

This shows that for any $n\geq n_0$, the set $S_n$ of \eqref{eqn:S_n} has the property that for any $\gamma\in\Gamma$ we can find $s\in S_n$ such that $\rho_i(\gamma s)$ is $(r,\eps)$-proximal in $\PP(V_i)$ or $\partial_{\infty}M_i$ for all $i \in I_1 \cup I'_2$.

If $I_2 = I'_2$, then we are done, so let us now assume $I_2\smallsetminus I_2' \neq \emptyset$.
By Lemma~\ref{lem.linealcase}, there exists $C>0$ such that for any $i\in I_2\smallsetminus I_2'$ and any $\gamma\in\Gamma$, if $d_{M_i}(o_i,\rho_i(\gamma)\cdot o_i) \geq C$, then $\rho_i(\gamma)$ is $(r,\eps)$-proximal in $\partial_{\infty}M_i$.
Let $N := \#(I_2\smallsetminus I_2')$.
Since for every $i\in I_2\smallsetminus I_2'$ and every $\beta\in\mathcal{F}$ we have $d_{M_i}(o_i,\rho_i(\beta\gamma_0^n\beta')\cdot o_i) \to +\infty$ as $n\to +\infty$ (see \eqref{eqn:triangle-ineq-dist-o}), we can find integers $n_0 \leq n_1 < \dots < n_{N+1}$ such that for any $i\in I_2\smallsetminus I_2'$, the intervals
$$\mathcal{I}_{i,j} := \Big[ \min_{\beta,\beta'\in\mathcal{F}} d_{M_i}(o_i,\rho_i(\beta\gamma_0^{n_j}\beta')\cdot o_i) - C, \max_{\beta,\beta'\in\mathcal{F}} d_{M_i}(o_i,\rho_i(\beta\gamma_0^{n_j}\beta')\cdot o_i) + C \Big],$$
for $1\leq j\leq N+1$, are pairwise disjoint.
We claim that
$$S := S_{n_1} \cup \dots \cup S_{n_{N+1}}$$
satisfies the property of Theorem~\ref{teo.generalresult}, where the $S_{n_j}$ are given by \eqref{eqn:S_n}.
Indeed, consider $\gamma\in\Gamma$.
Since $\#(I_2\smallsetminus I_2') = N$, by the pigeonhole principle there must exist $1\leq j\leq N+1$ such that $d_{M_i}(o_i,\rho_i(\gamma)\cdot o_i) \notin \mathcal{I}_{i,j}$ for all $i\in I_2\smallsetminus I_2'$.
By the triangle inequality, we then have $d_{M_i}(o_i,\rho_i(\gamma s)\cdot o_i)\geq C$ for all $i\in I_2\smallsetminus I_2'$ and all $s\in S_{n_j}$, hence $\rho_i(\gamma s)$ is $(r,\eps)$-proximal in $\partial_{\infty}M_i$ for all $i\in I_2\smallsetminus I_2'$ and all $s\in S_{n_j}$.
By the above, there exists $s\in S_{n_j}$ such that $\rho_i(\gamma s)$ is $(r,\eps)$-proximal in $\PP(V_i)$ or $\partial_{\infty}M_i$ for all $i\in I_1\cup I'_2$, hence for all $i\in I_1\cup I_2$.

\section{Existence of a simultaneously proximal element} \label{sec:proof-exists-simult-prox}

In this section we complete the proof of Theorem~\ref{thm:main} by establishing the following.

\begin{prop} \label{prop:simult-prox-proper}
In the setting~\ref{setting}, there exists $\gamma\in\Gamma$ such that $\rho_i(\gamma)$ is proximal in $\PP(V_i)$ or $\partial_{\infty}M_i$ simultaneously for all $i\in I_1\cup I_2$.
\end{prop}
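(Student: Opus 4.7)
The plan is to argue by induction on $k := |I_1 \cup I'_2|$, handling the indices in $I_2 \smallsetminus I'_2$ (i.e.\ lineal representations) at the very end. The base case $k \leq 1$ is immediate from the hypotheses of the proposition. For the inductive step, write $I_1 \cup I'_2 = J \sqcup \{i_0\}$ with $|J| = k-1$; by the inductive hypothesis pick $g \in \Gamma$ with $\rho_i(g)$ proximal for all $i \in J$, and by the hypothesis pick $h \in \Gamma$ with $\rho_{i_0}(h)$ proximal. I will build $\gamma = g^n\, \beta\, h^m \in \Gamma$ for a suitable $\beta \in \Gamma$ provided by condition~(*), which holds by Proposition~\ref{prop:move-away}, and for sufficiently large $n, m$, and verify the proximality of $\rho_i(\gamma)$ for each $i \in J \cup \{i_0\}$ via the sufficient criteria Lemma~\ref{l.critprox-proj} and Lemma~\ref{l.critprox-hyp}.

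For $i \in J$, I apply the sufficient criterion to $\rho_i(\gamma) = \rho_i(g^n) \circ \rho_i(\beta h^m)$ with $y^+ = x^+_{\rho_i(g^n)}$ and $Y^- = \rho_i(\beta h^m)^{-1} \cdot X^-_{\rho_i(g^n)}$: the contraction and Lipschitz hypotheses follow from Proposition~\ref{prop:Lip-proj} (resp.\ Proposition~\ref{prop:Lip-hyp}) applied to $\rho_i(g^n)$, once $n$ is large compared to the Lipschitz constants of $\rho_i(\beta h^m)$. Dually, for $i = i_0$ I take $y^+ = \rho_{i_0}(g^n \beta) \cdot x^+_{\rho_{i_0}(h^m)}$ and $Y^- = X^-_{\rho_{i_0}(h^m)}$ and exploit the proximality of $\rho_{i_0}(h^m)$; the Lipschitz conditions then hold for $m$ large relative to the Lipschitz constants of $\rho_{i_0}(g^n \beta)$. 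Taking the limit $n, m \to +\infty$, the remaining transversality condition reduces to the two finite-set avoidance statements
\[
\rho_i(\beta) \cdot \bigl(\rho_i(h^m) \cdot x^+_{g,i}\bigr) \notin X^-_{g,i} \quad (i \in J), \qquad \rho_{i_0}(\beta) \cdot x^+_{h,i_0} \notin \rho_{i_0}(g^n)^{-1} \cdot X^-_{h,i_0},
\]
which for any fixed $(n, m)$ is exactly of the form supplied by condition~(*).

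The main obstacle will be the coupling between the choice of $\beta$ (which depends on $(n, m)$ through the finite sets to be avoided) and the largeness required of $(n, m)$ for the Lipschitz hypothesis, which in turn depends on the Lipschitz constant of $\beta$. I plan to resolve this by extracting a common subsequence $(n_k, m_k) \to (+\infty, +\infty)$ along which the points $\rho_i(h^{m_k}) \cdot x^+_{g,i}$ ($i \in J$) and the hyperplanes or boundary points $\rho_{i_0}(g^{n_k})^{-1} \cdot X^-_{h,i_0}$ converge, using compactness of $\PP(V_i)$ and of the Grassmannian of projective hyperplanes for $i \in I_1$, and the dynamical trichotomy recalled in Section~\ref{subsubsec:isom-hyp} (elliptic/parabolic/hyperbolic) together with Proposition~\ref{prop:Lip-hyp} for $i \in I'_2$. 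Applying Proposition~\ref{prop:move-away} to the resulting limit objects will yield a single $\beta \in \Gamma$ whose transversality with the limits is strict, hence persists along the subsequence by continuity; discarding finitely many initial $k$ then secures the Lipschitz hypothesis and produces the desired $\gamma$.

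Finally, to accommodate the indices in $I_2 \smallsetminus I'_2$, I will first apply Lemma~\ref{lem:lineal-fix} to restrict to the subsemigroup $\Gamma_0$ of elements whose image under $\rho_i$ pointwise fixes the two limit points in $\partial_{\infty}M_i$ for each $i \in I_2 \smallsetminus I'_2$, and run the construction above inside $\Gamma_0$. By Lemma~\ref{lem.linealcase}, proximality of $\rho_i(\gamma)$ for $i \in I_2 \smallsetminus I'_2$ then reduces to $d_{M_i}(o_i, \rho_i(\gamma) \cdot o_i)$ being large. Combining the simultaneously proximal element for $I_1 \cup I'_2$ produced by the induction with an element of large simultaneous translation length on each $M_i$, $i \in I_2 \smallsetminus I'_2$, given by Lemma~\ref{lem:simult-basept-infty}, via a product in which a sufficiently high power of the proximal element absorbs the Lipschitz constant of the auxiliary factor, will produce a simultaneously proximal $\gamma \in \Gamma$ for all $i \in I_1 \cup I_2$.
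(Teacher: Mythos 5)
Your overall strategy---induction over $I_1 \cup I'_2$ one index at a time, handling the lineal indices $I_2 \smallsetminus I'_2$ last via Lemmas~\ref{lem:lineal-fix}, \ref{lem.linealcase}, and~\ref{lem:simult-basept-infty}, with condition~(*) supplying transversality and Propositions~\ref{prop:Lip-proj}/\ref{prop:Lip-hyp} supplying contraction---is the same as the paper's, and your final paragraph matches the paper's Step~2. The gap is in the inductive step for $I_1\cup I'_2$, specifically in the mechanism you use to break the coupling between the powers $(n,m)$ and the intermediate element $\beta$.

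Your product has the form $g^n\beta h^m$. To verify proximality of $\rho_i(g^n\beta h^m)$ you need, for each $i\in J$, that $n$ is large relative to the Lipschitz constant of $\rho_i(\beta h^m)$, and dually, for $i_0$, that $m$ is large relative to the Lipschitz constant of $\rho_{i_0}(g^n\beta)$. Since $\rho_i(h)$ is uncontrolled for $i\in J$ and $\rho_{i_0}(g)$ is uncontrolled at $i_0$, these global Lipschitz constants can grow without bound in $m$ and $n$ respectively; this is a genuine circularity, not bookkeeping. You propose to resolve it by extracting a subsequence along which $\rho_i(h^{m_k})\cdot x^+_{g,i}$ (for $i\in J$) and $\rho_{i_0}(g^{n_k})^{-1}\cdot X^-_{h,i_0}$ converge, then feeding the limits into Proposition~\ref{prop:move-away}. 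For $i\in I_1$ this works by compactness of $\PP(V_i)$. But for $i\in I'_2$ the boundary $\partial_\infty M_i$ is \emph{not} assumed compact---the paper insists on the non-proper case precisely to rule this out---and if, say, $\rho_i(h)$ is elliptic or parabolic on a non-proper space, the sequence $\rho_i(h^{m_k})\cdot x^+_{g,i}$ need not have any convergent subsequence at all. The elliptic/parabolic/hyperbolic trichotomy classifies orbits in $M_i$, not boundary orbits of non-hyperbolic isometries in the non-proper setting, so it does not rescue the extraction. This is a real gap.

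The paper avoids both the circularity and the compactness problem by a combination of three moves you did not make. First, it invokes Corollary~\ref{cor:finiteset} rather than the merely qualitative Proposition~\ref{prop:move-away}: Corollary~\ref{cor:finiteset} produces a \emph{single} finite set $\mathcal{F}\subset\Gamma$ and a constant $r_0>0$ that work uniformly over all finite transversal data $\cY_i^\pm$, so the Lipschitz constants of $\rho_i(\beta)$, $\beta\in\mathcal{F}$, are bounded once and for all, independently of the powers to be chosen afterwards; no limit objects are needed. Second, it uses a product of the form $\beta\gamma_0\beta'\gamma_1$ with \emph{two} sandwiching elements from $\mathcal{F}$, and orders the choices so that the power of $\gamma_1$ (contracting the indices in $I$) is fixed first, which fixes $L_1 := \max(\mathrm{Lip}(\rho_{i_0}(\gamma_1)),\mathrm{Lip}(\rho_{i_0}(\gamma_1)^{-1}),1)$, and only then is the power of $\gamma_0$ chosen so that $\rho_{i_0}(\gamma_0)$ is $(r,\eps'/L_1)$-proximal---the unbounded power absorbs the already-fixed Lipschitz constant. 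Third, for $i\in I$ it tracks the dynamics through the uniformly good region $B^{\eps/2}_{Y^-_{\rho_i(\gamma_0)}}$ of Propositions~\ref{prop:Lip-proj}/\ref{prop:Lip-hyp}, on which $\rho_i(\gamma_0)$ is $D$-Lipschitz with $D$ independent of $\gamma_0$, so $\gamma_0$'s potentially enormous global Lipschitz constant never enters for those indices; the transversality (from Corollary~\ref{cor:finiteset}) guarantees the intermediate images stay in this good region. To repair your argument you would want to substitute Corollary~\ref{cor:finiteset} for Proposition~\ref{prop:move-away} and restructure the product so the two unbounded-power contractions are not each exposed to the other's uncontrolled global Lipschitz behavior, which is exactly what the $\beta\gamma_0\beta'\gamma_1$ form with the stated ordering achieves.
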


\begin{proof}
Let $\Gamma_0$ be the subsemigroup of~$\Gamma$ consisting of those elements $\gamma\in\Gamma$ such that $\rho_i(\gamma) \in \rho_i(\Gamma)_{\mathrm{fix}}$ for all $i\in I_2\smallsetminus I'_2$.
By Lemma~\ref{lem:lineal-fix}, if we replace $\Gamma$ by~$\Gamma_0$, then we are still in the setting~\ref{setting}.
Therefore we may and shall assume that $\Gamma = \Gamma_0$.

$\bullet$ \textbf{Step 1: $I_1\cup I'_2$.}
We first assume $I_1\cup I'_2 \neq \emptyset$, and prove that there exists $\gamma\in\Gamma$ such that $\rho_i(\gamma)$ is proximal for all $i\in I_1\cup I'_2$.
For this, by induction, it is sufficient to prove that for any nonempty subset $I$ of $I_1\cup I'_2$ and any $i_0 \in (I_1\cup I'_2)\smallsetminus I$, if there exists $\gamma_1\in\Gamma$ such that $\rho_i(\gamma_1)$ is proximal for all $i\in I$, then there exists $\gamma\in\Gamma$ such that $\rho_i(\gamma)$ is proximal for all $i\in I\cup\{i_0\}$.
Let us prove this.

By assumption, there exists $\gamma_0\in\Gamma$ such that $\rho_{i_0}(\gamma_0)$ is proximal, with attractor $x_{i_0}^+$ and repellor $X_{i_0}^-$.
For each $i\in I$, let $x_i^+$ (\resp $X_i^-$) be the attractor (\resp repellor) of $\rho_i(\gamma_1)$.
By Proposition~\ref{prop:move-away}, condition (*) holds.
For $\cX_i^+ = \{ x_i^+\}$ and $\cX_i^- = \{ X_i^-\}$, with $N'=1$, let $r_0>0$ be the constant and $\mathcal{F}$ the finite subset of~$\Gamma$ given by Corollary~\ref{cor:finiteset}.
Let $r_1 := \min_{i\in I_1\cup I'_2} d_i(x_i^+,X_i^-) > 0$, where for $i\in I_1$ we denote by $d_i = d_{\PP(V_i)}$ the distance function on $\PP(V_i)$ from Section~\ref{sec:proj}.
As in the proof of Theorem~\ref{teo.generalresult}, there exists $D_0\geq 1$ such that $\rho_i(\beta)$ and $\rho_i(\beta)^{-1}$ are $D_0$-Lipschitz for all $i\in I_1\cup I_2$ and all $\beta\in\mathcal{F}$.

Fix $0 < \eps \leq r \leq \min(r_0,r_1)/4D_0$.
For each $i$ in $I_1$ (\resp $I_2$) and each $\gamma\in\Gamma$, let $Y_{\rho_i(\gamma)}^-$ be given by Proposition~\ref{prop:Lip-proj} (\resp Proposition~\ref{prop:Lip-hyp} for $\eps/2$ instead of~$\eps$).
Then there exists $D\geq D_0\geq 1$ such that
\begin{enumerate}[(a)]
  \item\label{item:a-bis} for any $i\in I_1\cup I'_2$ and any $\gamma\in\Gamma$, the restriction of $\rho_i(\gamma)$ to $B^{\eps/2}_{Y_{\rho_i(\gamma)}^-}$ is $D$-Lipschitz.
\end{enumerate}

Choose $\eps'>0$ small enough so that $2D^3\eps' \leq \eps$.
Up to replacing $\gamma_1$ by a large power, and then $\gamma_0$ by a large power, we may assume that
\begin{enumerate}[(a)] \setcounter{enumi}{1}
  \item\label{item:b-bis} $\rho_i(\gamma_1)$ is $(r,\eps')$-proximal for all $i\in I$, and $\rho_{i_0}(\gamma_0)$ is $(r,\eps'/L_1)$-proximal where $L_1 := \max(\mathrm{Lip}(\rho_{i_0}(\gamma_1)), \mathrm{Lip}(\rho_{i_0}(\gamma_1)^{-1}), 1)$.
\end{enumerate}
By Corollary~\ref{cor:finiteset} with $\cY_i^- = \{ Y_{\rho_i(\gamma_0)}^-\}$, we can find $\beta'\in\mathcal{F}$ such that
\begin{enumerate}[(a)] \setcounter{enumi}{2}
  \item\label{item:c-bis} $d_i(\rho_i(\beta')\cdot x_i^+, Y_{\rho_i(\gamma_0)}^-) \geq 3r$ for all $i\in I$,
\end{enumerate}
and by Corollary~\ref{cor:finiteset} with $\cY_i^+ = \{\rho_i(\gamma_0\beta')\cdot x_i^+\}$ for $i\in I$ and $\cY_{i_0}^- = \{\rho_{i_0}(\beta\gamma_1)^{-1}\cdot X_{i_0}^-\}$, we can find $\beta\in\mathcal{F}$ such that
\begin{enumerate}[(a)] \setcounter{enumi}{3}
  \item\label{item:d-bis} $d_i(\rho_i(\gamma_0\beta')\cdot x_i^+, \rho_i(\beta)^{-1}\cdot X_i^-) \geq 3r$ for all $i\in I$, and $d_{i_0}(\rho_{i_0}(\beta)\cdot x_{i_0}^+, \rho_{i_0}(\beta\gamma_1)^{-1}\cdot X_{i_0}^-) \geq 3r$. 
\end{enumerate}

We now consider the element
$$\gamma := \beta \gamma_0 \beta' \gamma_1 \in \Gamma.$$
Let us check that $\rho_i(\gamma)$ is proximal for all $i\in I\cup\{i_0\}$.

We claim that for any $i\in I$ the element $\rho_i(\gamma)$ sends $B^{\eps/2}_{X_i^-}$ into $b^{\eps/2}_{\rho_i(\beta\gamma_0\beta')\cdot x_i^+}$ in an $(\eps/2)$-Lipschitz way.
Indeed, this is a direct consequence of the following four observations:
\begin{enumerate}
  \item by \eqref{item:b-bis}, the element $\rho_i(\gamma_1)$ sends $B^{\eps/2}_{X_i^-}$ into $b^{\eps'}_{x_i^+}$ in an $\eps'$-Lipschitz way;
  \item $\rho_i(\beta')$ is $D$-Lipschitz, hence sends $b^{\eps'}_{x_i^+}$ into $b^{D\eps'}_{\rho_i(\beta')\cdot x_i^+}$;
  \item by \eqref{item:c-bis}, since $D\eps' + \eps/2 \leq 3r$, we have $b^{D\eps'}_{\rho_i(\beta')\cdot x_i^+} \subset B^{\eps/2}_{Y_{\rho_i(\gamma_0)}^-}$, and so by \eqref{item:a-bis} the element $\rho_i(\gamma_0)$ is $D$-Lipschitz on $b^{D\eps'}_{\rho_i(\beta')\cdot x_i^+}$ and sends it into $b^{D^2\eps'/2}_{\rho_i(\gamma_0\beta')\cdot x_i^+}$;
  \item $\rho_i(\beta)$ is $D$-Lipschitz, hence sends $b^{D^2\eps'/2}_{\rho_i(\gamma_0\beta')\cdot x_i^+}$ into $b^{D^3\eps'/2}_{\rho_i(\beta\gamma_0\beta')\cdot x_i^+} \subset b^{\eps/2}_{\rho_i(\beta\gamma_0\beta')\cdot x_i^+}$.
\end{enumerate}
By \eqref{item:d-bis} and the fact that $\rho_i(\beta)$ is $D_0$-Lipschitz, we have $d_i(\rho_i(\beta\gamma_0\beta')\cdot x_i^+, X_i^-) \geq 3r/D_0$.
By Lemmas \ref{l.critprox-proj} and~\ref{l.critprox-hyp} with $(r/(2D_0),\eps/2)$ instead of $(r,\eps)$, the element $\rho_i(\gamma)$ is $(r/D_0,\eps)$-proximal for all $i\in I$.

We claim that $\rho_{i_0}(\gamma)$ sends $B^{\eps/2}_{\rho_{i_0}(\beta\gamma_1)^{-1}\cdot X_{i_0}^-}$ into $b^{\eps/2}_{\rho_{i_0}(\beta)\cdot x_{i_0}^+}$ in an $(\eps/2)$-Lipschitz way.
Indeed, this is a direct consequence of the following four observations:
\begin{enumerate}
  \item $\rho_{i_0}(\beta\gamma_1)$ and $\rho_{i_0}(\beta\gamma_1)^{-1}$ are $(DL_1)$-Lipschitz, hence $\rho_{i_0}(\beta\gamma_1)$ sends $B^{\eps/2}_{\rho_{i_0}(\beta\gamma_1)^{-1}\cdot X_{i_0}^-}$ into $B^{\eps/(2DL_1)}_{X_{i_0}^-} \subset B^{\eps'/L_1}_{X_{i_0}^-}$;
   \item by \eqref{item:b-bis}, the element $\rho_{i_0}(\gamma_0)$ sends $B^{\eps'/L_1}_{X_{i_0}^-}$ into $b^{\eps'/L_1}_{x_{i_0}^+}$ in an $(\eps'/L_1)$-Lipschitz way;
  \item $\rho_{i_0}(\beta)$ is $D$-Lipschitz, hence sends $b^{\eps'/L_1}_{x_{i_0}^+}$ into $b^{D\eps'/L_1}_{\rho_{i_0}(\beta)\cdot x_{i_0}^+} \subset b^{\eps/2}_{\rho_{i_0}(\beta)\cdot x_{i_0}^+}$.
\end{enumerate}
By \eqref{item:d-bis} and Lemmas \ref{l.critprox-proj} and~\ref{l.critprox-hyp} with $(r/2,\eps/2)$ instead of $(r,\eps)$, the element $\rho_{i_0}(\gamma)$ is $(r,\eps)$-proximal.

This shows, by induction, that if $I_1\cup I'_2 \neq \emptyset$, then there exists $\gamma'\in\Gamma$ such that $\rho_i(\gamma')$ is proximal for all $i\in I_1\cup I'_2$.

$\bullet$ \textbf{Step 2: the general case.}
If $I_2 = I'_2$, then we are done by Step~1, so we now assume $I_2 \smallsetminus I'_2 \neq \emptyset$.
By Lemma~\ref{lem.linealcase}, there exists $C>0$ such that for any $i\in I_2\smallsetminus I'_2$, any element $\gamma \in \Gamma$ satisfying $d_{M_i}(o_i,\rho_i(\gamma)\cdot o_i) > C$ is $(r,\eps)$-proximal in $\partial_{\infty}M_i$.
By Lemma~\ref{lem:simult-basept-infty}, there exists an element $\gamma_0\in\Gamma$ such that $\rho_i(\gamma_0)$ is proximal for all $i\in I_2\smallsetminus I'_2$.
If $I_1\cup I'_2 = \emptyset$, then we are done, so we now assume $I_1\cup I'_2 \neq \emptyset$.
By Step~1, there exists $\gamma'\in\Gamma$ such that $\rho_i(\gamma')$ is proximal for all $i\in I_1\cup I'_2$.
Let $x_i^+$ (\resp $X_i^-$) be the attractor (\resp repellor) of $\rho_i(\gamma')$.

For $\cX_i^+ = \{ x_i^+\}$ and $\cX_i^- = \{ X_i^-\}$, with $N'=1$, let $r_0>0$ be the constant and $\mathcal{F}$ the finite subset of~$\Gamma$ given by Corollary~\ref{cor:finiteset} for $I_1\cup I'_2$.
Let $r_1 := \min_{i\in I_1\cup I'_2} d_i(x_i^+,X_i^-) > 0$.
As in the proof of Theorem~\ref{teo.generalresult}, there exists $D_0\geq 1$ such that $\rho_i(\beta)$ and $\rho_i(\beta)^{-1}$ are $D_0$-Lipschitz for all $i\in I_1\cup I_2$ and all $\beta\in\mathcal{F}$.
Fix $0 < \eps \leq r \leq \min(r_0,r_1)/4D_0$.

Choose $\eps'>0$ small enough so that $2D^3\eps' \leq \eps$.
Up to replacing $\gamma'$ by a large power, and then~$\gamma_0$ by a large power, we may assume that
\begin{enumerate}[(a)] \setcounter{enumi}{1}
  \item\label{item:b-ter} $\rho_i(\gamma')$ is $(r,\eps')$-proximal for all $i\in I_1\cup I'_2$ and $d_{M_i}(o_i,\rho_i(\gamma_0)) > C + d_{M_i}(o_i,\rho_i(\gamma')) + 2\,\max_{\beta''\in\mathcal{F}} d_{M_i}(o_i,\rho_i(\beta'')\cdot o_i)$ for all $i\in I_2\smallsetminus I'_2$.
\end{enumerate}
By Corollary~\ref{cor:finiteset}, we can find $\beta'\in\mathcal{F}$ such that
\begin{enumerate}[(a)] \setcounter{enumi}{2}
  \item\label{item:c-ter} $d_i(\rho_i(\beta')\cdot x_i^+, Y_{\rho_i(\gamma_0)}^-) \geq 3r$ for all $i\in I_1\cup I'_2$,
\end{enumerate}
and we can find $\beta\in\mathcal{F}$ such that
\begin{enumerate}[(a)] \setcounter{enumi}{3}
  \item\label{item:d-ter} $d_i(\rho_i(\gamma_0\beta')\cdot x_i^+, \rho_i(\beta)^{-1}\cdot X_i^-) \geq 3r$ for all $i\in I_1\cup I'_2$. 
\end{enumerate}

We now consider the element
$$\gamma := \beta \gamma_0 \beta' \gamma' \in \Gamma.$$
Exactly as in Step~1, for any $i\in I_1\cup I'_2$, the element $\rho_i(\gamma)$ sends $B^{\eps/2}_{X_i^-}$ into $b^{\eps/2}_{\rho_i(\beta\gamma_0\beta')\cdot x_i^+}$ in an $(\eps/2)$-Lipschitz way, hence is $(r/D_0,\eps)$-proximal.
On the other hand, for any $i\in I_2\smallsetminus I'_2$, by \eqref{item:b-ter} and the triangle inequality (see \eqref{eqn:triangle-ineq-dist-o}) we have $d_{M_i}(o_i,\rho_i(\gamma)) > C$, hence $\rho_i(\gamma)$ is proximal.
\end{proof}

\section{Simultaneous proximality in flag varieties of real reductive groups and in Gromov boundaries of hyperbolic metric spaces} \label{sec:proof-cor-main}

In this section we use Theorem~\ref{thm:main} to prove a variant of it (Theorem~\ref{thm:prox-in-G/P}) where products of projective spaces are replaced by flag varieties of real reductive linear Lie groups; we also prove Corollary~\ref{cor:main}.

\subsection{A variant of Theorem~\ref{thm:main} for flag varieties} \label{subsec:flag-varieties}

Let $G$ be a noncompact connected real reductive linear Lie group, and $P$ a minimal parabolic subgroup of~$G$.
An element $g\in G$ is said to be \emph{proximal in $G/P$} if it admits an attracting fixed point $x_g^+$ in $G/P$, \ie a fixed point $x_g^+$ with a neighborhood $\cV_g$ in $G/P$ such that $g^n\cdot x\to x_g^+$ as $n\to +\infty$ for all $x\in\cV_g$.
In that case, $x_g^+$ is unique, $g^{-1}$ is also proximal, and we can take $\cV_g = G/P \smallsetminus X_g^-$ where $X_g^-$ is the set of points of $G/P$ that are \emph{not} transverse to $x_g^- := x_{g^{-1}}^+$ (this set is a proper algebraic subvariety of $G/P$).

Endow $G/P$ with a Riemannian metric $d_{G/P}$.
For any $\eps>0$, any point $x\in G/P$, and any closed subset $X$ of $G/P$, we denote by $b_x^{\eps}$ the closed ball of radius $\eps$ centered at~$x$, and by $B_X^{\eps}$ the set of points of $G/P$ at distance at least $\eps$ from all points of~$X$.
Following Definition~\ref{def:r-eps-prox-proj}, for $r\geq\eps>0$ we say that a proximal element $g\in\GL(V)$ is \emph{$(r,\eps)$-proximal in $G/P$} if it satisfies the following three conditions:
\begin{enumerate}
  \item $d_{G/P}(x_g^+,X_g^-)\geq 2r$,
  \item $g \cdot B_{X_g^-}^{\eps} \subset b_{x_g^+}^{\eps}$,
  \item the restriction of $g$ to $B_{X_g^-}^{\eps}$ is $\varepsilon$-Lipschitz.
\end{enumerate}

Theorem~\ref{thm:main} has the following consequence.

\begin{teo} \label{thm:prox-in-G/P}
Let $G$ be a noncompact connected real reductive linear Lie group, $P$ a minimal parabolic subgroup of~$G$, and $\Gamma$ a Zariski-dense subsemigroup of~$G$.
Let $I$ be a finite set, and for each $i\in I$, let $M_i$ be a Gromov hyperbolic metric space with a choice of Bourdon metric on $\partial_{\infty}M_i$, and let $\rho_i : \Gamma\to\Isom(M_i)$ be a representation such that $\rho_i(\Gamma)$ acts on $\partial_{\infty}M_i$ without a unique global fixed point and contains an element which is proximal in $\partial_{\infty}M_i$.
Then there exists $r'_0>0$ such that for any $r'_0\geq r\geq\eps>0$, there is a finite subset $S$ of~$\Gamma$ with the following property: for any $\gamma \in \Gamma$, we can find $s\in S$ such that $\gamma s$ is $(r,\eps)$-proximal in $G/P$ and $\rho_i(\gamma s)$ is $(r,\eps)$-proximal in $\partial_{\infty}M_i$ for all $i\in I$.
\end{teo}

\subsection{Proof of Theorem~\ref{thm:prox-in-G/P}} \label{subsec:proof-prox-in-G/P}

Let $\aaa_G$ be a Cartan subspace of the Lie algebra $\g$ of~$G$, let $\Sigma = \Sigma(\g,\aaa_G)$ be the set of restricted roots of $\aaa_G$ in~$\g$, and let $\Delta \subset \Sigma$ be a system of simple restricted roots.
As observed by Tits, for any $\alpha\in\Delta$ there is an irreducible linear representation $(\tau_{\alpha},V_{\alpha})$ of~$G$ whose highest weight $\chi_{\tau_{\alpha}}$ is a multiple of the fundamental weight~$\omega_{\alpha}$ associated with~$\alpha$ and such that $\tau_{\alpha}(G)$ contains an element which is proximal in $\PP(V_{\alpha})$ (see \eg \cite[Lem.\,3.5]{GGKW}).
Since $\Gamma$ is Zariski-dense in~$G$, for every $\alpha\in\Delta$, the semigroup $\tau_{\alpha}(\Gamma)$ acts strongly irreducibly on~$V_{\alpha}$; moreover, $\tau_{\alpha}(\Gamma)$ contains an element which is proximal in $\PP(V_{\alpha})$ (see \cite{gm89,gr89,bl93,pra94}).
Finally, an element $g\in G$ is proximal in $G/P$ if and only if $\tau_{\alpha}(g)$ is proximal in $\PP(V_{\alpha})$ for all $\alpha\in\Delta$ (see \eg \cite[Prop.\,3.3.(c)]{GGKW}).

For every $\alpha\in\Delta$, fix a Euclidean structure on~$V_{\alpha}$ and let $d_{\PP(V_{\alpha})}$ be the corresponding distance function on $\PP(V_{\alpha})$ from Section~\ref{sec:proj}.
There are $G$-equivariant embeddings
$$\psi = (\psi_{\alpha})_{\alpha\in\Delta} : G/P \longhookrightarrow \prod_{\alpha\in\Delta} \PP(V_{\alpha}) \quad\quad\mathrm{and}\quad\quad \psi^* = (\psi^*_{\alpha})_{\alpha\in\Delta} : G/P \longhookrightarrow \prod_{\alpha\in\Delta} \PP(V_{\alpha}^*)$$
such that
\begin{enumerate}[(a)]
  \item\label{item:psi-1} if we endow $\prod_{\alpha\in\Delta} \PP(V_{\alpha})$ with the sup metric associated with the $d_{\PP(V_{\alpha})}$, then $\psi : G/P\to\psi(G/P)$ is $M$-bi-Lipschitz for some $M\geq 1$;
  \item\label{item:psi-2} two points $x,y\in G/P$ are transverse if and only if $\psi_{\alpha}(x)$ and $\psi^*_{\alpha}(y)$ are for all $\alpha\in\Delta$;
  \item\label{item:psi-3} if $g\in G$ is proximal in $G/P$, then for any $\alpha\in\Delta$ the attracting fixed point of $\tau_{\alpha}(g)$ in $\PP(V_{\alpha})$ is $\psi_{\alpha}(x_g^+)$, and its repelling projective hyperplane is $\psi^*_{\alpha}(x_g^-) = \psi_{\alpha}(X_g^-)$
\end{enumerate}
(see \eg \cite[\S\,3.4]{ben97}).

\begin{lema} \label{lem:r-eps-prox-G/P}
For any $\eps>0$, there exists $\eps' \in (0,\eps]$ with the following property: for any $g\in G$ and any $r\geq\eps$, if $\tau_{\alpha}(g)$ is $(Mr,\eps')$-proximal in $\PP(V_{\alpha})$ for all $\alpha\in\Delta$, then $g$ is $(r,\eps)$-proximal in $G/P$.
\end{lema}

\begin{proof}
For $y\in G/P$, we denote by $\cZ_y$ the set of points of $G/P$ that are \emph{not} transverse to~$y$ (a proper algebraic subvariety of $G/P$).

Fix $\eps>0$.
Note that the set of $(x,y)\in (G/P)^2$ with $d_{G/P}(z,\cZ_y)\geq\eps$ is compact.
Therefore, by \eqref{item:psi-2} and by continuity of $\psi_{\alpha}$ and~$\psi^*_{\alpha}$, there exists $\eps' \in (0,\eps/M^2]$ such that $d_{\PP(V_{\alpha})}(\psi_{\alpha}(x),\psi^*_{\alpha}(y)) \geq \eps'$ for all $\alpha\in\Delta$ and all $x,y\in G/P$ with $d_{G/P}(x,\cZ_y)\geq\eps$ (where we see $\psi^*_{\alpha}(y)$ as a projective hyperplane in $\PP(V_{\alpha})$).

Consider $g\in G$ and $r\geq\eps$ such that $\tau_{\alpha}(g)$ is $(Mr,\eps')$-proximal in $\PP(V_{\alpha})$ for all $\alpha\in\Delta$.
By \eqref{item:psi-3} we have $d_{\PP(V_{\alpha})}(\psi_{\alpha}(x_g^+),\psi_{\alpha}(X_g^-))\geq 2Mr$ for all $\alpha\in\Delta$, hence $d_{G/P}(x_g^+,X_g^-)\geq 2r$ by~\eqref{item:psi-1}.
For any $\alpha\in\Delta$, we have $\psi_{\alpha}(B_{X_g^-}^{\eps}) \subset B_{\psi^*_{\alpha}(x_g^-)}^{\eps'}$ by choice of~$\eps'$, hence $\psi_{\alpha}(g\cdot B_{X_g^-}^{\eps}) = g\cdot\psi_{\alpha}(B_{X_g^-}^{\eps}) \subset b_{\psi_{\alpha}(x_g^+)}^{\eps'} \subset b_{\psi_{\alpha}(x_g^+)}^{\eps/M}$ by $G$-equivariance of~$\psi_{\alpha}$ and~\eqref{item:psi-3}; in particular, $g\cdot B_{X_g^-}^{\eps} \subset b_{x_g^+}^{\eps}$ by~\eqref{item:psi-1}.
Moreover, for any $\alpha\in\Delta$ the restriction of $g$ to $\psi_{\alpha}(B_{X_g^-}^{\eps}) \subset B_{\psi^*_{\alpha}(x_g^-)}^{\eps'}$ is $\eps'$-Lipschitz, hence $(\eps/M^2)$-Lipschitz, and so the restriction of $g$ to $B_{X_g^-}^{\eps}$ is $\eps$-Lipschitz by~\eqref{item:psi-1}.
\end{proof}

\begin{proof}[Proof of Theorem~\ref{thm:prox-in-G/P}]
Let $r_0>0$ be given by Theorem~\ref{thm:main}, and let $M\geq 1$ be given by \eqref{item:psi-1} above.
We claim that we may take $r'_0 = r_0/M$.
Indeed, consider $r'_0\geq r\geq\eps>0$.
Let $\eps' \in (0,\eps]$ be given by Lemma~\ref{lem:r-eps-prox-G/P}.
By Theorem~\ref{thm:main}, there is a finite subset $S$ of~$\Gamma$ with the property that for any $\gamma \in \Gamma$, we can find $s\in S$ such that $\tau_{\alpha}(\gamma s)$ is $(Mr,\eps')$-proximal in $\PP(V_{\alpha})$ for all $\alpha\in\Delta$ and $\rho_i(\gamma s)$ is $(Mr,\eps')$-proximal in $\partial_{\infty}M_i$ for all~$i$.
Then $\gamma s$ is $(r,\eps)$-proximal in $G/P$ by Lemma~\ref{lem:r-eps-prox-G/P} and $\rho_i(\gamma s)$ is $(r,\eps)$-proximal in $\partial_{\infty}M_i$ for all~$i$ by Remark~\ref{rem:r-eps-r'-eps'-prox-hyp}.
\end{proof}

\subsection{A simultaneous control of lengths}

We now prove Corollary~\ref{cor:main}.
For this, we use the fact that for any Euclidean space~$V$ and any $g,g_1,g_2\in\GL(V)$,
\begin{equation} \label{eqn:mu-subadd}
\| \mu(g_1 g g_2) - \mu(g) \| \leq \| \mu(g_1) \| + \| \mu(g_2) \|
\end{equation}
(where $\mu : \GL(V)\to\RR^{\mathtt{d}}$ is the Cartan projection of Section~\ref{subsec:intro-classical-AMS}), and that for any Gromov hyperbolic metric space $(M,d_M)$ with basepoint $o\in M$ and any $g,g_1,g_2\in\Isom(M)$,
\begin{equation} \label{eqn:length-subadd}
| |g_1 g g_2|_M - |g|_M | \leq |g_1|_M + |g_2|_M
\end{equation}
(with the notation \eqref{eqn:length-stable-length}).
For \eqref{eqn:mu-subadd}, see \eg \cite[Lem.\,2.3]{kas08}.
For \eqref{eqn:length-subadd}, note that $|g_1 g g_2|_M = d_M(o,g_1 g g_2\cdot o) \leq d_M(o,g_1\cdot o) + d_M(g_1\cdot o,g_1g\cdot o) + d_M(g_1g\cdot o, g_1gg_2\cdot o) = |g_1|_M + |g|_M + |g_2|_M$, and similarly $|g|_M \leq |g_1^{-1}|_M + |g_1gg_2|_M + |g_2^{-1}|_M = |g_1|_M + |g_1gg_2|_M + |g_2|_M$.

\begin{proof}[Proof of Corollary~\ref{cor:main}]
Write $M$ as a direct product of finitely many Gromov hyperbolic metric spaces~$M_i$.
By assumption, for each $i$ the semigroup $\rho_i(\Gamma)$ is either elliptic, lineal, or of general type (see Section~\ref{subsubsec:isom-hyp}).
Let $I_2$ denote the set of $i$ for which $\rho_i(\Gamma)$ is lineal or of general type.

By \eqref{eqn:mu-subadd} and \eqref{eqn:length-subadd}, it is sufficient to prove the existence of a finite set $S\subset\Gamma$ and a constant $C>0$ such that for any $\gamma \in \Gamma$ we can find $s\in S$ with $\|\lambda(\rho(\gamma s)) - \mu(\rho(\gamma s))\| \leq C$ and $| |\gamma s|_{M_i,\infty} - |\gamma s|_{M_i}| \leq C$ for all~$i$.
In fact we can restrict to $i\in I_2$ since for $i\notin I_2$ the function $|\cdot|_{M_i}$ is uniformly bounded on~$\Gamma$ and $|\cdot|_{M_i,\infty}$ is zero.

By assumption, the Zariski closure $G$ of $\rho(\Gamma)$ in $\GL(V)$ is reductive.
It admits a Cartan decomposition $G = K_G \exp(\aaa^+_G) K_G$ where $K_G$ is a maximal compact subgroup of~$G$ and $\aaa^+_G$ is a closed Weyl chamber in a Cartan subspace $\aaa_G$ of the Lie algebra $\g$ of~$G$.
Any element $g\in G$ can be written as $g = k\exp(a)k'$ for some $k,k'\in K_G$ and a unique $a\in\aaa^+_G$; setting $\mu_G(g) := a$ defines a \emph{Cartan projection} $\mu_G : G\to\aaa^+_G$.
We also have a \emph{Jordan projection} (or \emph{Lyapunov projection}) $\lambda_G : G\to\aaa^+_G$ such that $\lambda_G(g) = \lim_n \mu_G(g^n)/n$ for all $g\in G$.

Up to replacing $G$ by a conjugate in $\GL(V)$ (which only modifies $\mu$ and $|\cdot|_M$ by a bounded additive amount, see \eqref{eqn:mu-subadd} and \eqref{eqn:length-subadd}), we may assume that the Cartan decomposition $G = K_G\exp(\aaa_G^+)K_G$ is compatible with the Cartan decomposition $\GL(V) = K\exp(\aaa)K$ of Section~\ref{subsec:unif-contract-proj}, in the sense that $K_G \subset K$ and $\aaa_G \subset \aaa$.
It is then sufficient to prove the existence of a finite set $S\subset\Gamma$ and a constant $C>0$ such that for any $\gamma \in \Gamma$ we can find $s\in S$ with $\|\lambda_G(\rho(\gamma s)) - \mu_G(\rho(\gamma s))\| \leq C$ and $| |\gamma s|_{M_i,\infty} - |\gamma s|_{M_i}| \leq C$ for all $i\in I_2$, where $\Vert\cdot\Vert$ is any fixed norm on~$\aaa_G$.

We can write $\aaa_G = \aaa_G^z \oplus \aaa_G^s$, where $\aaa_G^z$ is the intersection of $\aaa_G$ with the Lie algebra of the center of~$G$, and $\aaa_G^s$ is a Cartan subspace of the Lie algebra of the derived subgroup (semisimple part) of~$G$.
Let $\Sigma = \Sigma(\g,\aaa_G)$ be the set of restricted roots of $\aaa_G$ in~$\g$, and let $\Delta \subset \Sigma$ be a system of simple restricted roots.
As in Section~\ref{subsec:proof-prox-in-G/P}, for any $\alpha\in\Delta$ there is an irreducible linear representation $(\tau_{\alpha},V_{\alpha})$ of~$G$ whose highest weight $\chi_{\tau_{\alpha}}$ is of the form $n_{\alpha}\omega_{\alpha}$ for some $n_{\alpha}\in\NN^*$ and such that $\tau_{\alpha}(G)$ contains an element which is proximal in $\PP(V_{\alpha})$; Theorem~\ref{thm:main} gives the existence of $r\geq\eps>0$ and of a finite subset $S$ of~$\Gamma$ with the property that for any $\gamma \in \Gamma$, we can find $s\in S$ such that $\tau_{\alpha}\circ\rho(\gamma s)$ is $(r,\eps)$-proximal in $\PP(V_{\alpha})$ for all $\alpha\in\Delta$ and $\gamma s$ is $(r,\eps)$-proximal in $\partial_{\infty}M_i$ for all $i\in I_2$.
By Corollary~\ref{cor:r-eps-prox-lambda-mu}, we have $|\langle n_{\alpha}\omega_{\alpha},\mu_G(\rho(\gamma s))-\lambda_G(\rho(\gamma s))\rangle| \leq |\log 2r^2|$ for all $\alpha\in\Delta$.
Since the fundamental weights $\omega_{\alpha}$, for $\alpha\in\Delta$, restrict to a basis of $(\aaa_G^s)^{\ast}$, and since for any $g\in G$ the elements $\lambda_G(g)$ and $\mu_G(g)$ of~$\aaa_G$ have the same projection to $\aaa_G^z$, we deduce $\Vert\mu_G(\rho(\gamma s))-\lambda_G(\rho(\gamma s))\Vert \leq C$ for some $C>0$ independent of $\gamma$ and~$s$.
On the other hand, by Corollary~\ref{cor:r-eps-prox-length}, for any $i\in I_2$ there exists $C_i>0$ such that for any $g\in\Isom(M_i)$ which is $(r,\varepsilon)$-proximal in $\partial_{\infty}M_i$ we have $||g|_{M_i,\infty} - |g|_{M_i}| \leq C_i$.
\end{proof}




\end{document}